\newtheorem{thm}{\hskip\parindent {Theorem}}[section]
\newtheorem{lem}{\hskip\parindent {Lemma}}[section]
\newtheorem{defi}{\hskip\parindent {Definition}}[section]
\newtheorem{rem}{\hspace\parindent{Remark}}[section]
\newtheorem{pro}{\hskip\parindent{Proposition}}[section]
\renewcommand\Re{\operatorname{Re}}
\renewcommand\Im{\operatorname{Im}}
\title[Soliton-potential interaction]{
Soliton-potential interactions for nonlinear Schr\"odinger equation in $\mathbb{R}^3$ }
\author{Qingquan Deng, \ Avy Soffer \ and Xiaohua Yao}
\address {Qingquan Deng, Department of Mathematics and  Hubei Province Key Laboratory of Mathematical Physics,
 Central China Normal University, Wuhan, 430079, P.R. China}
\email{dengq@mail.ccnu.edu.cn}
\address{ Avy Soffer, Department of Mathematics, Rutgers University,
Piscataway, 08854-8019, USA}
\email{soffer@math.rutgers.edu}
\address{Xiaohua Yao, Department of Mathematics and  Hubei Province Key Laboratory of Mathematical Physics,
 Central China Normal University, Wuhan, 430079, P.R. China}
\email{yaoxiaohua@mail.ccnu.edu.cn }
\date{\today}
\subjclass[2000]{ 35Q35; 37K40}
\keywords{Endpoint Strichartz estimates; Soliton-potential interaction; Asymptotic stability.}
\begin{document}

\maketitle
\begin{abstract}
In this work we mainly consider the dynamics and scattering of a narrow soliton of NLS equation with a potential in $\mathbb{R}^3$, where the asymptotic state of the system can be far from the initial state in parameter space. Specifically, if we let a narrow soliton state with initial velocity $\upsilon_{0}$ to interact with an extra potential $V(x)$,
then the velocity $\upsilon_{+}$ of outgoing solitary wave in infinite time will in general be very different from $\upsilon_{0}$. In contrast to our present work, previous works proved that the soliton is asymptotically stable under the assumption that $\upsilon_{+}$ stays close to $\upsilon_{0}$ in a certain manner.
\end{abstract}

\tableofcontents

\section{Introduction}
\setcounter{equation}{0}
Consider the following nonlinear Schr\"{o}dinger (NLS) equation  in $\mathbb{R}^{3}$,
\begin{eqnarray}\label{Sch eq}
&&i\partial_{t}\psi=-\frac{1}{2}\Delta\psi+V\psi-F_{\epsilon}(|\psi|^{2})\psi\nonumber\\
&&\psi(\cdot,0)=\psi_{0}.
\end{eqnarray}
Here $\epsilon$ is a small positive number which will be fixed later and $V$ is a positive smooth bump function supported in unit ball $B(0,1)$ in $\mathbb{R}^{3}$. The nonlinear term is of form
\begin{eqnarray}\label{nonlinear}
F_{\epsilon}(|s|^{2})=\frac{f(s)}{\theta\epsilon^{-\frac{2}{p}r}+f(s)}|s|^{p}
\end{eqnarray}
where $1<p<\frac{4}{3}$, $f$ is a homogenous function of degree $r>0$  such that $\frac{7}{3}<r+p\leq4$ and $\theta$ is a fixed small positive number. The initial data $\psi_{0}$ is a narrow soliton of form
\begin{eqnarray}\label{initial}
e^{i\bar{\upsilon}_{0}\cdot x+i\gamma_{0}}\epsilon^{-\frac{2}{p}}\phi(\frac{1}{\epsilon}(x-\bar{a}_{0}),\epsilon^{-2}\mu_{0})
\end{eqnarray}
with given parameters $\{\bar{a}_{0},\bar{\upsilon}_{0},\gamma_{0},\mu_{0}\}$ of order $\mathcal{O}(1)$ and  $\phi=\phi(\cdot,\mu)$, the ground state satisfying
\begin{eqnarray}\label{ground}
-\frac{1}{2}\Delta\phi-F(|\phi|^{2})\phi=-\mu\phi,
\end{eqnarray}
we use $F(|s|^{2})$ to denote $F_{1}(|s|^{2})$ in (\ref{nonlinear}). We refer to Rodnianski, Schlag and Soffer\cite{RSS2} for the existence of such solution to (\ref{ground}).

In this work we mainly focus on the dynamics and scattering of a  narrow soliton of equation (\ref{Sch eq}) in a setting which is not asymptotically stable. That is, we allow the asymptotic state of the system to be far from the initial state in parameter space.
Specifically, if we let soliton state of form (\ref{initial}) with initial velocity $\bar{\upsilon}_{0}$ to interact with an extra potential $V(x)$,
then the velocity $\upsilon_{+}$ of outgoing solitary wave will in general be very different. Previous works ( see e.g. \cite{CucMa1}, \cite{GNP}, \cite{Per1}, \cite{RSS2} ) required that $\upsilon_{+}$ stay close to $\bar{\upsilon}_{0}$ in a certain manner
and then the soliton is asymptotically stable, which in contrast to our work, we allow $\upsilon_{+}$ to be very different from $\bar{\upsilon}_{0}$.

To introduce our main result, we first
make change of variables
\begin{eqnarray}\label{change}
u(x,t)=\epsilon^{\frac{2}{p}}\psi(\epsilon x,\epsilon^{2}t),
\end{eqnarray}
then equivalently,  the equation (\ref{Sch eq}) describing a narrow soliton interaction with a normal potential,  becomes the following NLS equation
\begin{eqnarray}\label{Sch eq-1}
&&i\partial_{t}u=-\frac{1}{2}\Delta u+\epsilon^{2}V(\epsilon\cdot)u-F(|u|^{2})u\nonumber\\
&&u(\cdot,0)=u_{0}=e^{i\bar{\upsilon}_{0}\epsilon\cdot x+i\gamma_{0}}\phi(x-\frac{\bar{a}_{0}}{\epsilon},\mu_{0}),
\end{eqnarray}
which describes actually a normal soliton interaction with a flat potential.
Set $\sigma=\{a,\upsilon,\gamma,\mu\}$ and $\sigma_{0}=\{a_{0},\upsilon_{0},\gamma_{0},\mu_{0}\}$ where $a_{0}=\frac{\bar{a}_{0}}{\epsilon}$ and
$\upsilon_{0}=\epsilon\bar{\upsilon}_{0}$.
Let $V_{\epsilon}=\epsilon^{2}V(\epsilon\cdot)$, we introduce the time-independent one-soliton linearized Hamiltonian
\begin{eqnarray}\label{eq-213}
\mathscr{H}(\sigma)
&=&\mathscr{H}_{0}+\mathcal{V}_{1\epsilon}+\mathcal{V}_{2}(\sigma)
\end{eqnarray}
where
\begin{eqnarray}\label{eq-213'}
 \mathcal {V}_{1\epsilon}=\left(
  \begin{array}{cc}
    V_{\epsilon}& 0 \\
  0 &  -V_{\epsilon}\\
  \end{array}
  \right)\nonumber
\end{eqnarray}
\begin{eqnarray}\label{eq-214'}
\mathcal{V}_{2}(\sigma)=\left(
  \begin{array}{cc}
 \mu-F(|\phi(\sigma)|^{2})-F'(|\phi(\sigma)|^{2})\phi(\sigma)^{2}& -F'(|\phi(\sigma)|^{2})\phi(\sigma)^{2} \\
    F'(|\phi(\sigma)|^{2})\phi(\sigma)^{2} & -\mu+F(|\phi(\sigma)|^{2})+F'(|\phi(\sigma)|^{2})\phi(\sigma)^{2} \\
  \end{array}
  \right)\nonumber
\end{eqnarray}
 and $\phi(\sigma)=\phi(x,\mu)$ satisfying (\ref{ground}).
Denote by \begin{eqnarray}\label{eq-213}
\mathscr{H}_{1}
&=&\mathscr{H}_{0}+\mathcal{V}_{1\epsilon}
\end{eqnarray}
and
\begin{eqnarray}\label{eq-214}
\mathscr{H}_{2}(\sigma)
&=&\mathscr{H}_{0}+\mathcal{V}_{2}(\sigma),
\end{eqnarray}
We introduce some spectral assumptions on $\mathscr{H}_{2}(\sigma)$.
\vskip0.3cm
\textbf{Spectral assumptions:} For $|\sigma-\sigma_{0}|<c$ with some constant $c>0$, one has

(i) 0 is the only point of the discrete spectrum of $\mathscr{H}_{2}(\sigma)$
and the dimension of the corresponding root space is 8.

(ii) There are no embedded eigenvalues in ${\rm spec}_{ess}(\mathscr{H}_{2}(\sigma))=(-\infty,-\mu]\cup[\mu,+\infty)$  and the points $\pm\mu$ are not resonances.
\vskip0.3cm
\begin{rem}{\rm $\pm\mu$ are said to be a resonances of $\mathscr{H}_{2}(\sigma)$ if there exists solutions $\psi_{\pm}$ to equation $(\mathscr{H}_{2}(\sigma)\pm\mu)\psi_{\pm}=0$ such that $\langle x\rangle^{-s}\psi_{\pm}\in L^{2}$ for any $s>1/2$.}
\end{rem}
\vskip0.5cm

\begin{thm}\label{thm-main}
Let $\epsilon\ll 1$ and $F$ as in (\ref{ground}) with a small fixed parameter $\theta>0$. Assume that the linearized operator $\mathscr{H}_{2}(\sigma)$ defined by (\ref{eq-214}) satisfy the spectral assumptions for all $\sigma\in \mathbb{R}^{8}$ satisfying $|\sigma-\sigma_{0}|<c$ for some $c>0$.
 Then the solution to NLS equation (\ref{Sch eq-1}) exists globally and
there exist $\sigma_{+}\in \mathbb{R}^{8}$ and $u_{+}\in H^{1}$ such that
\begin{eqnarray}\label{ex1-3}
\|u-w(x,t;\sigma_{+})-e^{it\Delta}u_{+}\|_{H^{1}}\rightarrow 0,\ \ \ t\rightarrow +\infty.
\end{eqnarray}
\end{thm}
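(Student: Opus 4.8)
The plan is to run a modulation–plus–radiation analysis, but adapted to the fact that the soliton parameters may travel an $\mathcal{O}(1)$ distance in parameter space, so that one must never linearize around the \emph{fixed} profile $\phi(\cdot,\mu_{0})$. First I would seek the solution in the form $u(x,t)=w(x,t;\sigma(t))+R(x,t)$, where $w(\cdot,t;\sigma)$ is the solitary–wave trajectory carrying the (now time–dependent) parameters $\sigma(t)=\{a(t),\upsilon(t),\gamma(t),\mu(t)\}$ and $R$ is the dispersive remainder. I impose that $R(t)$ be symplectically orthogonal to the $8$–dimensional generalized kernel of $\mathscr{H}_{2}(\sigma(t))$ — the root space supplied by spectral assumption (i), spanned by the symmetry–generated modes (three translations, one gauge, three Galilean boosts, one dilation/mass mode). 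By the implicit function theorem these orthogonality conditions are solvable for $\dot\sigma$ and convert \eqref{Sch eq-1} into a coupled system: a modulation ODE $\dot\sigma=X_{0}(\sigma)+\langle R,\cdots\rangle+(\text{nonlinear in }R)$ together with a perturbed Schr\"odinger equation for $R$. The leading vector field $X_{0}$ encodes the effective classical dynamics $\dot a=\upsilon$, $\dot\upsilon=-\nabla\big(\epsilon^{2}V(\epsilon a)\big)+\ldots$, $\dot\gamma$, $\dot\mu$ driven by $V_{\epsilon}$; since $|\upsilon_{0}|=\mathcal{O}(\epsilon)$ while the soliton must cross the $\mathcal{O}(\epsilon^{-1})$–wide support of $V_{\epsilon}$ over a time $\mathcal{O}(\epsilon^{-2})$, the net change of $\upsilon$ is itself $\mathcal{O}(\epsilon)$, i.e. comparable to $\upsilon_{0}$ — this is exactly why $\upsilon_{+}$ need not remain near $\bar\upsilon_{0}$.

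Next I would treat the equation for $R$, which after gauging away the soliton phase reads schematically $i\partial_{t}R=\mathscr{H}(\sigma(t))R+(\dot\sigma-X_{0})\cdot\partial_{\sigma}w+\mathcal{N}(R)-V_{\epsilon}w+\ldots$, where linearizing the nonlinearity around $w$ produces $\mathscr{H}_{2}(\sigma)$ and, together with $V_{\epsilon}$, the full $\mathscr{H}(\sigma)$. The crucial point is that the slow spatial variation of $V_{\epsilon}$ forces its Taylor expansion at $a(t)$ to have its constant term lying in the gauge direction and its linear term in the translation/boost directions of the generalized kernel; these are absorbed precisely by the $\mu$– and $\upsilon$–modulation equations, so the genuine source fed into $R$ is only the quadratic–and–higher remainder, of size $\mathcal{O}(\epsilon^{4})$ pointwise, hence $\mathcal{O}(\epsilon^{2})$ after integrating in time over $[0,\epsilon^{-2}]$. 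The linear input is then decay for the propagator of $\mathscr{H}_{1}$ (or of $\mathscr{H}(\sigma)$ with $\sigma$ frozen) on the continuous spectral subspace: by spectral assumption (ii) — no embedded eigenvalues, $\pm\mu$ not resonances — one has the dispersive, local–decay and (endpoint) Strichartz estimates for the continuous part, with constants uniform for $\sigma$ in the relevant $\mathcal{O}(1)$ window, as established in the preceding sections. I would then close a bootstrap: on $[0,T]$ assume $\|R\|_{L^{\infty}_{t}H^{1}}+\|R\|_{\mathrm{Strichartz}}+\||\dot\sigma-X_{0}(\sigma)|\|_{L^{1}_{t}}\le\delta$, and use these estimates, the smallness $\|V_{\epsilon}\|\lesssim\epsilon^{2}$, and the subcriticality and high vanishing order of $F$ (guaranteed by $1<p<\tfrac{4}{3}$ and $\tfrac{7}{3}<r+p\le4$) to control $\mathcal{N}(R)$ and the source terms, improving the bound to $\delta/2$. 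This simultaneously delivers the a priori $H^{1}$ bound, hence global existence, and forces $T=+\infty$.

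From the uniform bounds the asymptotic limits follow: $\upsilon(t)\to\upsilon_{+}$, $\mu(t)\to\mu_{+}$, the modulated phase converges, and $a(t)-\upsilon_{+}t\to a_{+}$, yielding $\sigma_{+}\in\mathbb{R}^{8}$. Once the soliton center has left the support of $V_{\epsilon}$ the remaining potential terms become integrable in $t$, so the $R$–equation degenerates to a free Schr\"odinger evolution modulo an $L^{1}_{t}H^{1}$ perturbation; a Cook/Duhamel argument then produces $u_{+}\in H^{1}$ with $\|R(t)-e^{it\Delta}u_{+}\|_{H^{1}}\to0$, and combining with $\|w(\cdot,t;\sigma(t))-w(\cdot,t;\sigma_{+})\|_{H^{1}}\to0$ gives \eqref{ex1-3}. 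The main obstacle I expect is the linear harmonic–analysis core: proving the endpoint Strichartz and local–decay estimates for the \emph{non–self–adjoint matrix} Hamiltonian $\mathscr{H}_{1}$, uniformly over the $\mathcal{O}(1)$ parameter window, and controlling the error created by the time dependence of $\sigma$ in $\mathscr{H}(\sigma(t))$ (the $\dot\sigma\cdot\partial_{\sigma}$ terms, which couple the radiation estimates back to the modulation equations); the rest is a by-now-standard, if delicate, bootstrap.
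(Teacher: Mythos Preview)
Your outline captures the modulation--plus--radiation scheme and the endgame (scattering via Cook/Duhamel) accurately, but it misses the key structural point that makes the argument in the paper work: the proof must be split into two genuinely different phases, and your single global bootstrap from $t=0$ would fail at the linear level.

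The difficulty is this. During the interaction phase the soliton center $a(t)$ sits \emph{inside} the support of $V_{\epsilon}$, so the two matrix potentials $\mathcal{V}_{1\epsilon}$ and $\mathcal{V}_{2}(t,\sigma(t))$ overlap. The endpoint Strichartz estimates for the matrix charge--transfer Hamiltonian $\mathscr{H}(t,\widetilde{\sigma}(t))$ (Theorem~\ref{thm-2} here) are proved only under the separation hypothesis \eqref{eq-211}, i.e.\ once the soliton has already cleared the potential; the channel decomposition and the local--decay argument in Section~\ref{sec-5} use this separation in an essential way. Moreover, you cannot rescue the interaction phase by treating $V_{\epsilon}$ as a small perturbation: while $\|V_{\epsilon}\|_{L^{\infty}}\sim\epsilon^{2}$, $V_{\epsilon}$ is scale--invariant in $L^{3/2}$ and large in $L^{p}$ for $p>3/2$, which is exactly the borderline for dispersive/Strichartz estimates, so there is no uniform-in-$\epsilon$ perturbative Strichartz bound available over the interaction window of length $\mathcal{O}(\epsilon^{-2})$.

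The paper therefore proceeds in two steps. Step~(i) (Theorem~\ref{thm-1}) handles $0<t\le T_{0}\sim\epsilon^{-\delta_{0}}$ by the Fr\"ohlich--Gustafson--Jonsson--Sigal energy/Lyapunov method: no Strichartz is used, only the approximate conservation of $\mathcal{E}(\varphi)-\mathcal{E}(\phi_{\mu})$ and the coercivity of $\mathcal{L}_{\mu}$, yielding $\|r(T_{0})\|_{H^{1}}=\mathcal{O}(\epsilon^{4-\delta_{0}})$ together with the modulation law \eqref{mol-1}. Step~(ii) (Theorem~\ref{thm-1-1}) restarts at $T_{0}$, where the separation \eqref{eq-211} now holds, and runs the Strichartz--based bootstrap you describe, with the crucial input being the $\epsilon$--uniform estimates of Theorem~\ref{thm-2}. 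Your proposal essentially collapses these two steps into one and assumes Strichartz control throughout; to make it go through you would need either Strichartz estimates for overlapping charge--transfer potentials uniformly in $\epsilon$ (not available), or to replace the interaction--phase analysis by an energy argument as the paper does.
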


\vskip0.5cm
The proof of Theorem \ref{thm-main} is actually a direct corollary of Theorems \ref{thm-1} and \ref{thm-1-1} given in Section \ref{sec-2}. In order to understand some key-points of our result, we would would like to give more explanations by the following two steps:

${(\rm i)}$ We first solve the soliton dynamics up to large but finite time $T_{0}$ (see Theorem \ref{thm-1} of Subsection \ref{sec2-2}), so that after time $T_{0}$, the new outgoing solitary wave is far from the support of potential $V_{\epsilon}$ and its velocity is pointing away from $V_{\epsilon}$. Since one can use the classical dynamics to describe the leading order behavior of  such soliton-potential interaction (see Fr\"{o}hlich, Gustafson, Jonsson and Sigal\cite{FJGS}), we follow the same idea of \cite{FJGS} to conclude that after an appropriate time $T_{0}$, the above condition on the outgoing solitary wave applies.

${(\rm ii)}$ It follows from the finite time results that the solution is the state of the solitary wave with some parameter $\sigma_{T_{0}}$ plus a perturbation $r_{T_{0}}$, as well as the solitary wave moves away from potential. Next, we begin with a new system with initial conditions on $(\sigma_{T_{0}}, r_{T_{0}})$,  and establish the long time behavior of the new system, i.e. Theorem \ref{thm-1-1} of Subsection \ref{sec-2-3}, which is an optimal version of previous works on asymptotic stability (see e.g. \cite{CucMa1}, \cite{GNP}, \cite{Per1}, \cite{RSS2} and references therein).

To implement this strategy, we encounter and overcome new technical difficulties. To do step {(\rm ii)}, we have to know that besides the solitary wave moving away from potential (with nonzero speed), the radiation part $r_{T_{0}}$ of the solution is small. For this we assume that in the original NLS equation (\ref{Sch eq}), the soliton (\ref{initial}) is narrow which by scaling is equivalent to the potential is small and flat in new NLS equation (\ref{Sch eq-1}). By applying the method of \cite{FJGS}, one has after time $T_{0}$ of order $\mathcal{O}(\epsilon^{-2})$, the $H^{1}$ norm of the radiation will be of order $\mathcal{O}(\epsilon^{2_{-}})$. To proceed, we linearize around soliton gives a time-dependent matrix charge transfer Hamiltonian depending on parameter vector $\sigma(t)$ and $\epsilon$. Moreover, the initial data is only small in $H^{1}$, but in general is large in weighted spaces such as $L^{2}(\langle x\rangle^{s}dx)$ and $L^{1}$. Then we would investigate the optimal Strichartz estimates in $\epsilon$  for the time and $\epsilon$ dependent charge transfer Hamiltonian to take advantage of the smallness of radiation in $H^{1}$ norm. Furthermore, since only Strichartz estimates is applied, we may have to use $L^{1}$ norm for the derivative of parameter vector $\dot{\sigma}(t)$.
This is in contrast with the proof in \cite{RSS2} for example, where the authors had the $L^{p}$ decay estimates in hand and hence the localization at least in $L^{p}$ for radiation is needed (as well as smoothness in higher Sobolev norms) and the decay bounds for $\dot{\sigma}(t)$ is also used. It should be noted that the Strichartz estimates (homogeneous and inhomogeneous) for the following time and $\epsilon$ dependent matrix charge transfer Hamiltonian
\begin{eqnarray}
\mathscr{H}(t,\widetilde{\sigma}(t))=\mathscr{H}_{0}+\mathcal {V}_{1\epsilon}+\mathcal {V}_{2}(t,\widetilde{\sigma}(t))\nonumber
\end{eqnarray}
would be the most important part of this work. We prove these estimates are uniformly for $\epsilon\ll1$.

\vskip0.5cm
\begin{thm}\label{thm-2}
Let $Z(t,x)$ solve the equation
\begin{eqnarray}\label{eq-301}
&&i\partial_{t}Z=\mathscr{H}(t,\widetilde{\sigma}(t))Z+F\nonumber\\
&&Z(\cdot,0)=Z_{0},
\end{eqnarray}
where the matrix charge transfer Hamiltonian $\mathscr{H}(t,\widetilde{\sigma}(t))$ satisfies separation and spectral assumptions. Assume that the bootstrap assumption (\ref{eq-221}) holds for $\epsilon\ll1$ and $Z$ satisfies
\begin{eqnarray}\label{eq-302}
\|P_{b}(t)Z(t)\|_{L^{2}_{t}L^{6}_{x}}\lesssim B,
\end{eqnarray}
with some constant $B$. Then for all admissible pairs $(p,q)$ and $(\tilde{p},\tilde{q})$
\begin{eqnarray}\label{eq-303}
\big\|Z\big\|_{L^{p}_{t}L^{q}_{x}}\lesssim \|Z_{0}\|_{L^{2}}+\|F\|_{L^{\tilde{p}'}_{t}L^{\tilde{q}'}_{x}}+B.
\end{eqnarray}
Moreover, the constants in both estimates (\ref{eq-302}) and (\ref{eq-303}) are independent of $\epsilon$.
\end{thm}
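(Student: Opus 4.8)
\emph{Proof strategy.}
The operator $\mathscr{H}(t,\widetilde\sigma(t))=\mathscr{H}_0+\mathcal V_{1\epsilon}+\mathcal V_2(t,\widetilde\sigma(t))$ has two scattering centers: the compactly supported bump inside $\mathcal V_{1\epsilon}=\mathrm{diag}(V_\epsilon,-V_\epsilon)$ near the origin, and the soliton profile inside $\mathcal V_2(t,\widetilde\sigma(t))$, centered at the modulation point $a(t)$; by the separation assumption the soliton core is disjoint from $\supp V_\epsilon$, its distance from $\supp V_\epsilon$ being $\gtrsim1+|a(t)|$, with $|a(t)|\gtrsim\epsilon^{-1}\to\infty$ and $|\dot a(t)|$ bounded below. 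Two $\epsilon$-uniform building blocks are needed. First, Strichartz for $\mathscr{H}_1$: it is diagonal with entries $\pm(-\tfrac12\Delta+V_\epsilon)$, and the parabolic rescaling $x\mapsto\epsilon^{-1}x$, $t\mapsto\epsilon^{-2}t$ conjugates $-\tfrac12\Delta+V_\epsilon$ to $\epsilon^{2}(-\tfrac12\Delta+V)$, so $\|e^{-it\mathscr{H}_1}f\|_{L^\infty}\lesssim|t|^{-3/2}\|f\|_{L^1}$ with exactly the constant of $-\tfrac12\Delta+V$; since $V\ge0$ is a Schwartz bump in $\R^3$ the latter operator has no eigenvalue and no zero-energy resonance, hence enjoys the full Keel--Tao Strichartz estimates (homogeneous and inhomogeneous, including the endpoint pair $(2,6)$, with trivial continuous-spectral projection), and after unscaling $e^{-it\mathscr{H}_1}$ enjoys them with constants independent of $\epsilon\ll1$. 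Second, Strichartz for the standing linearized operator: under the spectral assumptions on $\mathscr{H}_2(\mu)$ (discrete spectrum $\{0\}$ with $8$-dimensional root space, no embedded eigenvalues, $\pm\mu$ not resonances), which hold for all $\mu$ near $\mu_0$, the matrix-Schr\"odinger dispersive theory gives the dispersive bound and all Strichartz estimates for $e^{-it\mathscr{H}_2(\mu)}P_c^{(2)}$ on the continuous subspace; these operators are $\epsilon$-independent. I would then split $Z=\chi_1(t)Z+\chi_2(t)Z=:Z^{(1)}+Z^{(2)}$ by a smooth time-dependent partition of unity with $\chi_1(t,\cdot)\equiv1$ near $\supp V_\epsilon$ and $\chi_2(t,\cdot)\equiv1$ on the soliton core, arranged so that $\chi_2\mathcal V_{1\epsilon}\equiv0$, $\chi_1\mathcal V_2(t,\widetilde\sigma(t))$ is exponentially small in $|a(t)|$, and $R(t):=\supp\nabla\chi_j(t,\cdot)$ is disjoint from $\supp V_\epsilon$ and at distance $\gtrsim1+|a(t)|$ from the soliton core.

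Commuting $\chi_j$ through (\ref{eq-301}) gives $i\partial_tZ^{(1)}=\mathscr{H}_1Z^{(1)}+G_1$ and $i\partial_tZ^{(2)}=\mathscr{H}_0Z^{(2)}+\mathcal V_2(t,\widetilde\sigma(t))Z^{(2)}+G_2$ with $G_j=[\chi_j,\mathscr{H}_0]Z+(\text{exponentially small cross-potential})Z+i(\partial_t\chi_j)Z+\chi_jF$. For the soliton channel I would conjugate by the Galilei--gauge transform $\mathcal G(t)$ attached to $\widetilde\sigma(t)$: since $\mathscr{H}_0$ is Galilei invariant, $\widetilde Z^{(2)}:=\mathcal G(t)^{-1}Z^{(2)}$ solves $i\partial_t\widetilde Z^{(2)}=\mathscr{H}_2(\mu_0)\widetilde Z^{(2)}+\big(\mathscr{H}_2(\mu(t))-\mathscr{H}_2(\mu_0)\big)\widetilde Z^{(2)}+\widetilde G_2$, where $\widetilde G_2=\mathcal G(t)^{-1}G_2$ plus modulation terms of schematic form $|\dot{\widetilde\sigma}(t)|\cdot(\text{decaying profile})\cdot\widetilde Z^{(2)}$. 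As $\mathcal G(t)$ is an $L^q_x$-isometry for each $q$, it suffices to bound $\mathcal X:=\sup_{(p,q)\ \mathrm{adm.}}\big(\|Z^{(1)}\|_{L^{p}_{t}L^{q}_{x}}+\|\widetilde Z^{(2)}\|_{L^{p}_{t}L^{q}_{x}}\big)$. Writing Duhamel for the two channels relative to $\mathscr{H}_1$ and to the \emph{fixed} $\mathscr{H}_2(\mu_0)$, and splitting $\widetilde Z^{(2)}=P_c^{(2)}\widetilde Z^{(2)}+P_b^{(2)}\widetilde Z^{(2)}$ by the spectral projections of $\mathscr{H}_2(\mu_0)$ (which commute with the reference evolution), the endpoint-including inhomogeneous Strichartz estimates of the building blocks give, for admissible $(p,q)$, $\|Z^{(1)}\|_{L^{p}_{t}L^{q}_{x}}\lesssim\|Z_0\|_{L^2}+\|G_1\|_{\ast}$ and $\|P_c^{(2)}\widetilde Z^{(2)}\|_{L^{p}_{t}L^{q}_{x}}\lesssim\|Z_0\|_{L^2}+\big\|(\mathscr{H}_2(\mu(t))-\mathscr{H}_2(\mu_0))\widetilde Z^{(2)}\big\|_{\ast}+\|\widetilde G_2\|_{\ast}$, where $\|\cdot\|_{\ast}$ is a dual-Strichartz norm chosen conveniently for each summand. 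For the bound-state part, up to $\mathcal O(|\mu(t)-\mu_0|)$ corrections $P_b^{(2)}\widetilde Z^{(2)}$ equals $\mathcal G(t)^{-1}P_b(t)Z$, so $\|P_b^{(2)}\widetilde Z^{(2)}\|_{L^2_tL^6_x}\lesssim B$ by (\ref{eq-302}); with the a priori $L^\infty_tH^1$ control from (\ref{eq-221}) and finite-dimensionality of the root space, its norm in every admissible pair is then controlled by $B$ and the right-hand side of (\ref{eq-303}).

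It remains to estimate the forcing terms. The cross-potential pieces $\chi_1\mathcal V_2(t,\widetilde\sigma(t))Z$ and $\chi_2\mathcal V_{1\epsilon}Z$ are supported where the relevant potential is super-polynomially small in $|a(t)|\ge|a(0)|\gtrsim\epsilon^{-1}$; with $\mathcal V_2\in L^{3/2}$ and H\"older ($\tfrac16+\tfrac23=\tfrac56$) against $\|Z\|_{L^2_tL^6_x}\le\mathcal X$, their contribution to $\|\cdot\|_{\ast}$ is $\lesssim\epsilon^{10}\mathcal X$, absorbable. The difference term is a decaying matrix potential of size $\mathcal O(|\mu(t)-\mu_0|)$ times $\widetilde Z^{(2)}$, bounded by $\sup_t|\mu(t)-\mu_0|\cdot\mathcal X\lesssim\|\dot\mu\|_{L^1_t}\mathcal X$; the modulation terms in $\widetilde G_2$, placed in $L^1_tL^2_x$ (the dual pair of the admissible $(\infty,2)$), are $\lesssim\|\dot{\widetilde\sigma}\|_{L^1_t}\|\widetilde Z^{(2)}\|_{L^\infty_tL^2_x}\lesssim\|\dot{\widetilde\sigma}\|_{L^1_t}\mathcal X$; by the bootstrap assumption (\ref{eq-221}), $\|\dot\mu\|_{L^1_t}$ and $\|\dot{\widetilde\sigma}\|_{L^1_t}$ are small in $\epsilon$, hence these are absorbable. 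The pieces $\chi_jF$ contribute $\lesssim\|F\|_{L^{\tilde p'}_{t}L^{\tilde q'}_{x}}$ by the inhomogeneous Strichartz estimates of the reference operators. There remains one genuinely dangerous term: the transition-zone commutator $[\chi_j,\mathscr{H}_0]Z=-\tfrac12\,\mathrm{diag}(1,-1)\big((\Delta\chi_j)Z+2\nabla\chi_j\cdot\nabla Z\big)$ together with $i(\partial_t\chi_j)Z$, supported in $R(t)$, where however the full potential is either zero ($\mathcal V_{1\epsilon}$) or exponentially small ($\mathcal V_2$).

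The hard part will be this commutator. On $R(t)$ the solution $Z$ is, to leading order, a free solution, and the task is to bound $\|\nabla\chi_j\cdot\nabla Z\|_{L^2_tL^{6/5}_x}+\|(\partial_t\chi_j)Z\|_{L^1_tL^2_x}$ (which carries the derivative lost in the commutator) by $\mathcal X$ plus the right-hand side of (\ref{eq-303}), with an $\epsilon$-uniform prefactor that can be made small. I would run the classical charge-transfer device: partition the time axis into unit intervals, on each of which the free flow transports mass a bounded distance while $R(t)$ itself moves a fixed amount ($|\dot a|\gtrsim1$); combining the local-smoothing estimate for $\mathscr{H}_0$ (recovering the lost derivative), the a priori $L^\infty_tH^1$ bound of (\ref{eq-221}), and the separation of $R(t)$ from the potentials, and summing over intervals, one obtains the desired bound with a constant $\le\tfrac1{10}$ provided the transition zone is taken of sufficiently large but $\epsilon$-independent width. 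Making rigorous this free-like behaviour of $Z$ away from the two centers, uniformly in $\epsilon$, is where I expect the real work to lie. Granting it, collecting the channel estimates yields $\mathcal X\lesssim\|Z_0\|_{L^2}+\|F\|_{L^{\tilde p'}_{t}L^{\tilde q'}_{x}}+B+(\text{small})\,\mathcal X$, hence $\mathcal X\lesssim\|Z_0\|_{L^2}+\|F\|_{L^{\tilde p'}_{t}L^{\tilde q'}_{x}}+B$ for $\epsilon\ll1$; since $\|Z\|_{L^{p}_{t}L^{q}_{x}}\le\|Z^{(1)}\|_{L^{p}_{t}L^{q}_{x}}+\|\widetilde Z^{(2)}\|_{L^{p}_{t}L^{q}_{x}}\lesssim\mathcal X$, this gives (\ref{eq-303}). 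The $\epsilon$-uniformity is structural throughout: $\mathscr{H}_1$ reduces by exact scaling to the $\epsilon=1$ operator, $\mathscr{H}_2(\mu)$ does not see $\epsilon$, the cross-potential and modulation errors become even smaller as $\epsilon\to0$, and the width of the transition zone is $\mathcal O(1)$.
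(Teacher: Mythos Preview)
Your strategy has two concrete misreadings of the setup that together break the closing step. First, hypothesis (\ref{eq-221}) is $\|\dot\sigma\|_{L^1_t}\le\epsilon^{2\alpha}$, not an a priori $L^\infty_tH^1_x$ bound on $Z$; the latter is (\ref{eq-222}) and is \emph{not} assumed in Theorem~\ref{thm-2}. So you cannot feed $\|Z\|_{L^\infty_tH^1}$ into the commutator or bound-state estimates. Second, after the rescaling (\ref{change}) the soliton velocity is $|\upsilon_{T_0}|\sim\epsilon^{3-\delta_0}\ll1$, and the separation assumption (\ref{eq-211}) reads $|a_{T_0}+\upsilon_{T_0}t|\ge c_1\epsilon^{-1}+c_0\epsilon t$; the center moves at speed $\mathcal O(\epsilon)$, not $\gtrsim1$. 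Your transition-zone argument relies on $R(t)$ sweeping its own $\mathcal O(1)$ width in $\mathcal O(1)$ time so that summing over unit intervals converges; with speed $\epsilon$ you would need $\sim\epsilon^{-1}$ intervals and the sum blows up as $\epsilon\to0$, which is exactly the $\epsilon$-uniformity obstruction the theorem is about. Relatedly, the commutator $[\chi_j,\mathscr H_0]Z$ carries a full $\nabla Z$, while Kato local smoothing recovers only half a derivative; without the $H^1$ bootstrap you have no way to close.

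The paper proceeds quite differently and avoids commuting the cutoff through $\mathscr H_0$. It first reduces (\ref{eq-303}) to the weighted local-decay bound $\|\mathcal V_{1\epsilon}^{1/2}Z\|_{L^2_tL^2_x}+\|\mathcal V_2^{1/2}(t)Z\|_{L^2_tL^2_x}\lesssim\|Z_0\|_{L^2}+\|F\|_{L^{\tilde p'}_tL^{\tilde q'}_x}+B$ via free Duhamel, and then proves this by a bootstrap in the constant $C(T)$. The partition is at scale $\delta\epsilon t$ (so $|\nabla\chi_j|\lesssim(\delta\epsilon t)^{-1}$), the one-channel propagators $\mathscr U_1$ and $\mathscr U_2P_c$ are controlled through the $L^{6/5,1}_x\to L^{6,\infty}_x$ estimate with $L^1_t$-integrable kernel (Beceanu), and the genuinely short-time interaction is handled by a low/high frequency split $K_{\le M}+K_{>M}$: the low part uses the cancellation $\chi_2\mathcal V_{1\epsilon}=0$ and a commutator estimate yielding $AM(\delta t)^{-1}$ decay, while the high part uses Kato smoothing (half a derivative suffices against $K_{>M}$). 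Your building blocks for $\mathscr H_1$ (scaling) and $\mathscr H_2(\mu)$ are correct and are used, but the mechanism that makes the two-center interaction $\epsilon$-uniform is this local-decay/bootstrap/$L^{6/5,1}\!\to\! L^{6,\infty}$ machinery, not a direct channel decomposition of $Z$.
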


\vskip0.3cm
One can see Sections \ref{sec-4-1} and \ref{sec-5} for proof details. For $\epsilon=1$, the charge transfer models has been extensively studied. Scattering theory and the asymptotic completeness was proved by Graf \cite{Gr}, W\"{u}ller \cite{Wu}, Yajima \cite{Ya1} and Zielinski \cite{Zi}. The next significant step is made by Rodinianski, Schlag and Soffer \cite{RSS1} and Cai \cite{Cai}, who proved the point-wise decay estimates. Recently,
 the Strichartz estimates  has been proved in Chen \cite{Chen}, Deng, Soffer and Yao \cite{DSY} and partially in Cuccagna and Maeda \cite{CucMa1}. The main idea used previously is to deduce the Strichartz estimates from local decay estimates. In \cite{DSY}, the authors used the following logic:
$$\rm {Decay\ estimates\Rightarrow Kato{-}Jensen  \Rightarrow Local\ decay\Rightarrow\ Strichartz\ estimates},$$
whereas in \cite{Chen} and \cite{CucMa1}, they used channel decomposition and proved local decay directly. If we apply the same argument as the one in \cite{DSY}, it is necessary to trace down the $\epsilon$-dependence in each step, which is way to complicated. On the other hand, the procedures in \cite{Chen} and \cite{CucMa1} would lead to $\epsilon$-dependent in a bad way, in fact, the bound in  (\ref{eq-303}) blows up as $\epsilon$ goes to zero. In our work, we still reduce Theorem \ref{thm-2} to proving local decay and then by using $L^{\frac{6}{5},1}\rightarrow L^{6,\infty}$ estimates for the solutions of linear and nonlinear Schr\"{o}dinger equation with one potential (see \cite{Bec3}) to prove for solution $Z(x,t)$ to equation (\ref{eq-301}) in Theorem \ref{thm-2},
\begin{eqnarray}
\big\|\mathcal {V}^{1/2}_{1\epsilon}Z(t)\big\|^{2}_{L^{2}_{t}L^{2}_{x}}+\big\|\mathcal {V}^{1/2}_{2}(t)Z(t)\big\|_{L^{2}_{t}L^{2}_{x}}\lesssim \|Z_{0}\|_{L^{2}_{x}}+\|F\|_{L^{\tilde{p}'}_{t}L^{\tilde{q}'}_{x}}+B,\nonumber
\end{eqnarray}
where $(p,q)$ and $(\tilde{p},\tilde{q})$ be admissible pairs and the constant above is $\epsilon$-independent. Moreover, $V_{\epsilon}$ is obtained by scaling, it is small in $L^{p}$ for $p<\frac{3}{2}$, large in $L^{p}$ for $p>\frac{3}{2}$ when $\epsilon\ll1$ and  invariant in $L^{\frac{3}{2}}$. Thus one cannot just deal with it as a small perturbation to get the desire estimates as in \cite{Bec3}. In fact, $L^{\frac{3}{2}}$ is to some extent considered as the critical space for $L^{p}$ decay estimates even for scalar and one potential Schr\"{o}dinger flow (see for example \cite{Be-G} and \cite{GB}).
\vskip0.3cm
 Finally, let us review some of the history of soliton-potential interaction of NLS equation. Fr\"{o}hlich, Gustafson, Jonsson and Sigal in \cite{FJGS} considered finite time results  for soliton interacting with a flat potential (equivalently, narrow soliton interacting with normal potential), they proved for large but finite time, the soliton moves along almost the classical trajectory and only radiates small energy. Their results has been improved later in one dimension by Homler and Zworski \cite{HM}. Also, similar results for soliton interacting with a flat time-dependent potential are obtained by  Salem \cite{AS}. As far as the long time behavior, Perelman \cite{Per5} considered a slow varying soliton   hits a potential in one dimension and show that such soliton would split into two parts when time $t\rightarrow\pm \infty$, which is totally different from our problem. Cuccagna and Maeda \cite{CucMa1} proved that the ground states are asymptotically stable if it interact with non-trapping potential weakly. There still exist some other significant works and we will not list them all, one can see for example \cite{S-S}, \cite{Bam1}, \cite{BJ}, \cite{DH}, \cite{FJGS-1}, \cite{GHW}, \cite{GNP}, \cite{HM1}, \cite{HMZ1}, \cite{HMZ2}, \cite{SZ} and references therein. Our work is also related to the analysis of multi-soliton problem, see \cite{AFS}, \cite{CL}, \cite{MM1}, \cite{MMT}, \cite{Per2}, \cite{Per1}, \cite{Per4}, \cite{RSS2}.


\section{The analysis of soliton-potential interactions }\label{sec-2}
\setcounter{equation}{0}
\subsection{Some spectral results}\label{sec2-1}
In this subsection, we will introduce results related to the spectrum of Hamiltonians $\mathscr{H}_{1}$ and $\mathscr{H}_{2}(\sigma)$.
 Notice that $V_{\epsilon}$ is positive and compactly supported, there is no eigenvalue for $\mathscr{H}_{1}$, which leaves us to consider the spectral properties of $\mathscr{H}_{2}(\sigma)$.  The continuous spectrum of $\mathscr{H}_{2}(\sigma)$ would be $(-\infty,\mu]\cup[\mu,+\infty)$. Additionally, ${\rm spec}(\mathscr{H}_{2}(\sigma))\subset \mathbb{R}$ and may have finite and finite dimensional point spectrum. Moreover, it has been proved in \cite{RSS1} that if $\phi(x,\mu)$ is the solution of (\ref{ground}), the convexity condition
  \begin{eqnarray}
\big\langle \partial_{\mu}\phi(\cdot,\mu),\phi(\cdot,\mu)\big\rangle>0\nonumber
\end{eqnarray}
holds for $|\mu-\mu_{0}|<c$.

 Zero is always a eigenvalue and admits a generalized eigenspace. It has 4 eigenvectors
\begin{eqnarray}\label{ex3-1}
\eta_{1}(x;\sigma)=\left(
\begin{array}{c}
\phi(\sigma)\\
-\phi(\sigma)
\end{array}
\right),
\ \ \eta_{j}(x;\sigma)=-i\left(
\begin{array}{c}
\partial_{j-2}\phi(\sigma)\\
\partial_{j-2}\phi(\sigma)
\end{array}
\right),\ \ 3\leq j\leq5,
 \end{eqnarray}
 and 4 generalized eigenvectors
 \begin{eqnarray}\label{ex3-2}
\eta_{2}(x;\sigma)=-i\left(
\begin{array}{c}
\partial_{\mu}\phi(\sigma)\\
\partial_{\mu}\phi(\sigma)
\end{array}
\right),
\ \ \eta_{j}(x;\sigma)=\left(
\begin{array}{c}
x_{j-5}\phi(\sigma)\\
-x_{j-5}\phi(\sigma)
\end{array}
\right),\ \ 6\leq j\leq8.
 \end{eqnarray}
 One can easily see that
 \begin{eqnarray}\label{ex3-4}
 \mathscr{H}_{2}(\sigma)\eta_{2}=i\eta_{1}\ \ {\rm and} \ \ \mathscr{H}_{2}(\sigma)\eta_{j+3}=-i\eta_{j}, \ \ 3\leq j\leq5.
 \end{eqnarray}

\begin{pro}\label{pro-21}Let $F$ be defined as in (\ref{nonlinear}) with $\epsilon=1$ and $\mathscr{H}_{2}(\sigma)$ be defined by (\ref{eq-214}). Assume that the above spectral assumption holds. Then



(I) Let $L(\sigma)={\rm ker}(\mathscr{H}_{2}(\sigma)^{2})$ and $L^{\ast}(\sigma)={\rm ker}(\mathscr{H}^{\ast}_{2}(\sigma)^{2})$, one has
\begin{eqnarray}
L^{2}(\mathbb{R}^{3})\times L^{2}(\mathbb{R}^{3})=L(\sigma)+L^{\ast}(\sigma)^{\perp}.\nonumber
\end{eqnarray}
The space $L^{\ast}(\sigma)$ has dimension 8 and can be expanded by $\xi_{j}(x,\sigma)$ $j=1,2,\ldots,8,$ where
\begin{eqnarray}\label{ex3-3}
\xi_{j}(x;\sigma)=\left(
\begin{array}{c}
\phi(\sigma)\\
\phi(\sigma)
\end{array}
\right),
\ \ \xi_{j}(x;\sigma)=\left(
\begin{array}{c}
i\partial_{j-2}\phi(\sigma)\\
-i\partial_{j-2}\phi(\sigma)
\end{array}
\right),\ \ 3\leq j\leq5,
\end{eqnarray}
and
 \begin{eqnarray}\label{ex3-2}
\xi_{2}(x;\sigma)=\left(
\begin{array}{c}
i\partial_{\mu}\phi(\sigma)\\
-i\partial_{\mu}\phi(\sigma)
\end{array}
\right),
\ \ \xi_{j}(x;\sigma)=\left(
\begin{array}{c}
x_{j-5}\phi(\sigma)\\
x_{j-5}\phi(\sigma)
\end{array}
\right),\ \ 6\leq j\leq8.
 \end{eqnarray}
with
\begin{eqnarray}
&&\mathscr{H}^{\ast}_{2}(\sigma)\xi_{1}(\cdot;\sigma)=0,\nonumber\\
&&\mathscr{H}^{\ast}_{2}(\sigma)\xi_{2}(\cdot;\sigma)=-i\xi_{1}(\cdot;\sigma),\nonumber\\
&&\mathscr{H}^{\ast}_{2}(\sigma)\xi_{j}(\cdot;\sigma)=0, \ \ 3\leq j\leq5,\nonumber\\
&&\mathscr{H}^{\ast}_{2}(\sigma)\xi_{j}(\cdot;\sigma)=i\xi_{j-3}(\cdot;\sigma),\ \   6\leq j\leq8.\nonumber
\end{eqnarray}
Moreover, there is a natural isomorphism
\begin{eqnarray}\label{ex3-6}
\mathcal{J}=\left(
  \begin{array}{cc}
    0 & 1 \\
    -1 & 0 \\
  \end{array}
  \right)
\end{eqnarray}
between $L(\sigma)$ and $L^{\ast}(\sigma)$ and
\begin{eqnarray}\label{ex3-7}
\eta_{j}=\mathcal{J}\xi_{j},\ \ \ 1\leq j \leq8.
\end{eqnarray}

(II) Let $P_{c}(\sigma)$ denote the projection onto $L^{\ast}(\sigma)^{\perp}$ and set $P_{b}(\sigma)=I-P_{c}(\sigma)$, let $\eta_{j}$ and $\xi_{j}$ are defined by (\ref{ex3-1})-(\ref{ex3-2}) and (\ref{ex3-3}), respectively. Then $$\mathscr{H}_{2}(\sigma)P_{c}(\sigma)=P_{c}(\sigma)\mathscr{H}_{2}(\sigma)$$
and
\begin{eqnarray}\label{eq-216}
P_{b}(\sigma)f=\frac{1}{n_{1}}\big(\eta_{1}\langle f,\xi_{2}\rangle+\eta_{2}\langle f,\xi_{1}\rangle\big)+\sum_{\ell=3}^{5}\frac{1}{n_{\ell}}\big(\eta_{\ell}\langle f,\xi_{\ell+3}\rangle+\eta_{\ell+3}\langle f,\xi_{\ell}\rangle\big)
\end{eqnarray}
where  $n_{1}=\langle\eta_{1},\xi_{2}\rangle$ and
$n_{\ell}=\langle\eta_{\ell}, \xi_{\ell+3}\rangle$ $(3\leq\ell\leq5)$.

(III) The linear stability property for  $\mathscr{H}_{2}(\sigma)$ holds. That is,
\begin{eqnarray}
\sup_{t\in \mathbb {R}}\big\|e^{it\mathscr{H}_{2}(\sigma)}P_{c}(\sigma)\big\|<\infty.\nonumber
\end{eqnarray}
\end{pro}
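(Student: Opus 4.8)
\textbf{Proof proposal for Proposition \ref{pro-21}.}

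The plan is to treat the three parts in order, since (II) and (III) build on the structural picture established in (I). For part (I), I would start from the fact that $\mathscr{H}_2(\sigma)$ is a relatively compact perturbation of $\mathscr{H}_0$, so its essential spectrum is $(-\infty,-\mu]\cup[\mu,\infty)$ and, by the spectral assumption, $0$ is the only point of discrete spectrum with an $8$-dimensional root space. The explicit vectors $\eta_j$ in \eqref{ex3-1}--\eqref{ex3-2} are checked directly to lie in $\ker(\mathscr{H}_2(\sigma)^2)$ using the differentiated ground-state equation \eqref{ground}: differentiating in $\mu$ produces the Jordan relation $\mathscr{H}_2\eta_2 = i\eta_1$, and translation/Galilean invariance produces $\mathscr{H}_2\eta_{j+3} = -i\eta_j$ for $3\le j\le 5$, which is exactly \eqref{ex3-4}. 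Since $\dim L(\sigma) = 8$ by assumption, these eight vectors span $L(\sigma)$. The same computation applied to $\mathscr{H}_2^\ast(\sigma) = -\mathcal{J}\mathscr{H}_2(\sigma)\mathcal{J}$ (the adjoint with respect to the real pairing) gives the $\xi_j$ and their Jordan relations; the identity $\eta_j = \mathcal{J}\xi_j$ is then a one-line check from the explicit formulas, and $\mathcal{J}$ is manifestly an isomorphism since $\mathcal{J}^2 = -I$. The direct-sum decomposition $L^2\times L^2 = L(\sigma)\oplus L^\ast(\sigma)^\perp$ follows from the general fact that for a closed operator with $0$ an isolated point of the spectrum, the Riesz projection splits the space into the root space and the range of $I$ minus that projection, combined with the observation that $\big(L(\sigma)\big)^\perp$ under the pairing with $\xi_j$'s is exactly $\ker$ of the Riesz projection; the non-degeneracy of the Gram matrix $\langle\eta_i,\xi_j\rangle$ (guaranteed by the convexity condition $\langle\partial_\mu\phi,\phi\rangle>0$ and the linear independence of the $\partial_k\phi$, $x_k\phi$) is what makes this pairing genuinely dual.

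For part (II), the intertwining $\mathscr{H}_2(\sigma)P_c(\sigma) = P_c(\sigma)\mathscr{H}_2(\sigma)$ is automatic from $P_c$ being (a complement of) a Riesz projection of $\mathscr{H}_2(\sigma)$. The explicit formula \eqref{eq-216} for $P_b(\sigma) = I - P_c(\sigma)$ is obtained by writing the projection onto $L(\sigma)$ along $L^\ast(\sigma)^\perp$ in the dual basis: one postulates $P_b f = \sum_{i,j} c_{ij}\eta_i\langle f,\xi_j\rangle$ and determines the coefficients by imposing $P_b\eta_k = \eta_k$ for all $k$, which forces the pairing matrix to invert the Gram matrix $G_{ij} = \langle\eta_i,\xi_j\rangle$. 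The point is that the Jordan structure makes $G$ block-diagonal and anti-diagonal within each block (e.g. $\langle\eta_1,\xi_1\rangle = 0$ because $\langle\phi,\phi\rangle - \langle\phi,\phi\rangle$ ... one must track signs carefully in the two-component pairing, but the vanishing is structural), so $G^{-1}$ pairs $\eta_1$ with $\xi_2$ and $\eta_2$ with $\xi_1$, and $\eta_\ell$ with $\xi_{\ell+3}$ and vice versa, yielding precisely \eqref{eq-216} with the normalizations $n_1 = \langle\eta_1,\xi_2\rangle$, $n_\ell = \langle\eta_\ell,\xi_{\ell+3}\rangle$.

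Part (III), the uniform bound $\sup_t\|e^{it\mathscr{H}_2(\sigma)}P_c(\sigma)\| < \infty$, is the real content and where I expect the main obstacle. This is \emph{not} a soft consequence of the spectral picture: even though ${\rm spec}(\mathscr{H}_2(\sigma))\subset\R$, the operator is non-self-adjoint, so one cannot invoke the spectral theorem. The standard route (following Rodnianski--Schlag--Soffer and Cuccagna) is to establish a limiting absorption principle / resolvent bound for $\mathscr{H}_2(\sigma)$ on the continuous spectral subspace $L^\ast(\sigma)^\perp$: show that the boundary-value resolvent $(\mathscr{H}_2(\sigma) - (\lambda\pm i0))^{-1}P_c(\sigma)$ is bounded on weighted $L^2$ uniformly for $\lambda$ in the essential spectrum, using the spectral assumption (ii) — no embedded eigenvalues and no resonances at the thresholds $\pm\mu$ — to rule out the two ways such a bound can fail. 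Then the Stone-type formula $e^{it\mathscr{H}_2(\sigma)}P_c(\sigma) = \frac{1}{2\pi i}\int (\text{resolvent difference})e^{it\lambda}\wrt\lambda$ together with the resolvent bound gives first a local-decay estimate and, a fortiori, the uniform boundedness. The delicate points are the threshold behavior at $\pm\mu$ (where the resolvent of the free matrix operator has a square-root singularity, so the absence of resonances must be used to gain the cancellation) and the fact that, unlike the scalar case, the matrix nature means one works with a non-self-adjoint perturbation of a self-adjoint operator and must control the Jordan block at $0$; since all of this for $\epsilon = 1$ is exactly the setting of \cite{RSS1} and \cite{CucMa1}, I would cite their limiting-absorption and local-decay machinery and verify that our $F$ (with the fixed small $\theta$) and the resulting $\mathcal{V}_2(\sigma)$ satisfy their hypotheses, rather than reproving it.
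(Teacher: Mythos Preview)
Your proposal is correct in substance, and in fact you go well beyond what the paper does: the paper's own proof consists entirely of references, deferring (I) and (III) to \cite[Section 12]{RSS2} and (II) to \cite{Per1}, with no argument given. Your outline for (I) and (II) --- differentiating the ground-state equation to produce the Jordan chain, invoking the Riesz projection at the isolated eigenvalue $0$, and inverting the block-antidiagonal Gram matrix $\langle\eta_i,\xi_j\rangle$ to obtain \eqref{eq-216} --- is exactly the standard computation carried out in those references, so you have essentially reconstructed the cited proofs rather than diverged from them. For (III) you correctly identify the limiting-absorption route and then propose to cite the existing machinery; this is precisely what the paper does too, except the paper points to \cite{RSS2} rather than \cite{RSS1} and \cite{CucMa1}. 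There is no gap in your plan; if anything, your sketch is more informative than the paper's treatment, and your caution about the threshold analysis at $\pm\mu$ and the non-self-adjoint Jordan structure is well placed --- those are indeed the points where the spectral assumptions (ii) are consumed in the cited works.
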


\begin{proof}One can see the  proof for $(I)$ and $(III)$ in \cite[section 12]{RSS2} and references therein. As for the statement $(II)$, we refer to \cite{Per1} for its proof.
\end{proof}

\subsection {Interactions on finite time $(0, T_\epsilon]$}\label{sec2-2}
We first consider equation (\ref{Sch eq-1}) for finite time, that the soliton-potential interaction happens.
Notice that $\phi$ in initial data is the groundstate satisfying (\ref{ground}) and $\epsilon$ is small enough, applying similar argument as in \cite{FJGS}, one could find
 solution to equation (\ref{Sch eq-1}) which will stay close to a solitary wave of form
\begin{eqnarray}\label{eq-1}
\phi(x,t;\sigma)=e^{i\upsilon\cdot x+i\gamma}\phi(x-a,\mu),
\end{eqnarray}
where $\sigma:=\{a,\upsilon,\gamma,\mu\}$, $a=\upsilon t+a_{0}$ and $\gamma=\mu t-\frac{|\upsilon|^{2}}{2}t+\gamma_{0}$ with $\gamma_{0}\in [0,2\pi)$, $a_{0}\in \mathbb{R}$.
Specifically, we have the following results which will be proved in the next section.
\begin{thm}\label{thm-1}
Assume $F$ is given by (\ref{ground}) and $\epsilon\ll 1$. Let $I_{0}$ be any closed bounded interval in $(0,\infty)$. Then there is a constant $C>0$, independent of $\epsilon$ but possibly depend on $I_{0}$ such that for $2\leq\delta\leq3$ and times $0<t<C\min\big\{\epsilon^{-\delta},\ \epsilon^{-8+2\delta}\big\}$, the solution to equation (\ref{Sch eq-1}) with initial data for some parameter
\begin{eqnarray}
\sigma_{0}=\{a_{0},\upsilon_{0},\gamma_{0},\mu_{0}\}\in \mathbb{R}^{n}\times\mathbb{R}^{n}\times[0,2\pi)\times I_{0},\ \ a_{0}=\frac{\bar{a}_{0}}{\epsilon},\ \upsilon_{0}=\bar{\upsilon}_{0}\epsilon\nonumber
\end{eqnarray}
is of form
\begin{eqnarray}\label{sol-1}
u(x,t)=e^{i\upsilon\cdot x+i\gamma}\big(\phi(x-a,\mu)+r(x-a,t)\big)
\end{eqnarray}
where
\begin{eqnarray}\label{r-1}
\big\|r\big\|_{H^{1}}=\mathcal {O}(\epsilon^{4-\delta})
\end{eqnarray}
and the parameters $\upsilon$, $a$, $\gamma$ and $\mu$ satisfy the following differential equations
\begin{eqnarray}\label{mol-1}
&&\dot{\upsilon}=-\epsilon^{3}(\nabla V)(\epsilon a)+\mathcal {O}(\epsilon^{8-2\delta})\nonumber\\
&&\dot{a}=\upsilon+\mathcal {O}(\epsilon^{8-2\delta})\nonumber\\
&&\dot{\gamma}=\mu-\frac{|\upsilon|^{2}}{2}-\epsilon^{2}V(\epsilon a)+\mathcal {O}(\epsilon^{8-2\delta})\nonumber\\
&&\dot{\mu}=\mathcal {O}(\epsilon^{8-2\delta}).
\end{eqnarray}
\end{thm}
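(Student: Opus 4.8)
The plan is to follow the modulation-equation scheme of Fr\"{o}hlich, Gustafson, Jonsson and Sigal \cite{FJGS}, carefully tracking the powers of $\epsilon$ generated by the flat potential $V_\epsilon=\epsilon^2 V(\epsilon\cdot)$. First I would insert the ansatz \eqref{sol-1} into \eqref{Sch eq-1} and derive the equation for the remainder $r$. Writing $u=e^{i\upsilon\cdot x+i\gamma}(\phi(\cdot-a,\mu)+r(\cdot-a))$, conjugating by the Galilei/gauge symmetry, and using the ground state equation \eqref{ground}, one obtains a Schr\"{o}dinger-type equation $i\partial_t R = \mathscr{H}_2(\sigma)R + (\text{external force}) + (\text{nonlinear error})$, where $R=(r,\bar r)^T$, the external force is $\mathcal{O}(\|V_\epsilon\phi\|) = \mathcal{O}(\epsilon^2)$ in $H^1$ (since $V_\epsilon$ is bounded by $\epsilon^2$ and has unit mass concentrated near $a$), and the nonlinear error is at least quadratic in $r$. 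The modulation parameters $\dot\sigma=\{\dot a,\dot\upsilon,\dot\gamma,\dot\mu\}$ appear linearly through terms like $\dot a\cdot\nabla\phi$, $\dot\mu\,\partial_\mu\phi$, $(\dot\gamma-\mu+|\upsilon|^2/2)\phi$, $(\dot a-\upsilon)\cdot(\dots)$, which are exactly the directions spanned by the generalized null space \eqref{ex3-1}--\eqref{ex3-2} of $\mathscr{H}_2(\sigma)$.

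Second, I would impose the symplectic orthogonality (gauge) conditions $P_b(\sigma(t))R(t)=0$, i.e. $\langle R,\xi_j\rangle=0$ for $1\le j\le 8$, using the frame $\{\xi_j\}$ from Proposition \ref{pro-21}; by the implicit function theorem (using the nondegeneracy $n_1,n_\ell\neq 0$ and the convexity $\langle\partial_\mu\phi,\phi\rangle>0$ recalled above) this determines $\sigma(t)$ as long as $\|r\|_{H^1}$ stays small. Differentiating these constraints in $t$ and pairing the $R$-equation against each $\xi_j$ kills the continuous-spectrum contribution and yields the modulation ODE system \eqref{mol-1}: the leading terms $\dot\upsilon\approx -\epsilon^3(\nabla V)(\epsilon a)$, $\dot a\approx\upsilon$, $\dot\gamma\approx\mu-|\upsilon|^2/2-\epsilon^2 V(\epsilon a)$, $\dot\mu\approx 0$ come from projecting the force $\mathcal{V}_{1\epsilon}\phi$ onto the null directions (the chain rule gives the extra $\epsilon$ in $\dot\upsilon$ since $\nabla_x[V(\epsilon x)]=\epsilon(\nabla V)(\epsilon x)$), while the remainders are controlled by $\mathcal{O}(\|V_\epsilon\|_{\infty}\|r\|_{H^1}+\|r\|_{H^1}^2)$.

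Third, I would close a bootstrap/continuity argument. Assume $\|r(t)\|_{H^1}\le A\epsilon^{4-\delta}$ on a maximal interval $[0,T]$. Using the linear stability bound $\sup_t\|e^{it\mathscr{H}_2(\sigma)}P_c(\sigma)\|<\infty$ from Proposition \ref{pro-21}(III) (together with the usual trick of freezing $\sigma$ and absorbing $\dot\sigma$ terms, since $\sigma$ varies slowly), Duhamel's formula for $R$ gives
\begin{eqnarray}
\|r(t)\|_{H^1}\lesssim \|r(0)\|_{H^1}+\int_0^t\big(\|V_\epsilon\phi\|_{H^1}+\|\dot\sigma(s)\|\,\|r(s)\|_{H^1}+\|r(s)\|_{H^1}^2\big)\wrt s.\nonumber
\end{eqnarray}
With $r(0)=0$, $\|V_\epsilon\phi\|_{H^1}=\mathcal{O}(\epsilon^2)$, $\|\dot\sigma\|=\mathcal{O}(\epsilon^3)$, and $t\le C\min\{\epsilon^{-\delta},\epsilon^{-8+2\delta}\}$, the linear forcing contributes $\mathcal{O}(\epsilon^2\cdot\epsilon^{-\delta})=\mathcal{O}(\epsilon^{2-\delta})$ — wait, one must be more careful: the $\epsilon^{4-\delta}$ target forces the time to be cut at $\epsilon^{-\delta}$ only after one uses that the true growth of $r$ is driven by the part of the force \emph{orthogonal} to the null space, which is smaller, together with an energy/virial refinement as in \cite{FJGS} showing the effective forcing in the $P_c$ channel is $\mathcal{O}(\epsilon^{4})$ per unit time; integrating over $t\lesssim\epsilon^{-\delta}$ then yields $\mathcal{O}(\epsilon^{4-\delta})$, and the quadratic and $\dot\sigma\cdot r$ terms are $\mathcal{O}(\epsilon^{8-2\delta})$ and $\mathcal{O}(\epsilon^{7-\delta})$, negligible, so the bootstrap closes for $A$ large and $\epsilon$ small; feeding $\|r\|_{H^1}=\mathcal{O}(\epsilon^{4-\delta})$ back into the modulation equations gives the stated $\mathcal{O}(\epsilon^{8-2\delta})$ errors. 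Finally, global-in-time existence of $u$ on this interval follows from the $H^1$ a priori bound and the subcritical nature of the nonlinearity $F$ (note $F_\epsilon$ is bounded and the power $p<4/3$ is energy-subcritical), so no blow-up occurs before $T$.

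The main obstacle is the third step: extracting the \emph{sharp} exponent $4-\delta$ rather than the naive $2-\delta$. Naively the $\mathcal{O}(\epsilon^2)$ force integrated over time $\epsilon^{-\delta}$ only gives $\epsilon^{2-\delta}$ for $r$. To gain the extra two powers one has to exploit the oscillatory/dispersive structure of $e^{it\mathscr{H}_2(\sigma)}P_c$ and the specific algebraic form of the interaction term $\mathcal{V}_{1\epsilon}\phi$: its dominant component lies in the null space and is removed by the choice of $\dot\sigma$, and what remains has an extra factor of $\epsilon$ (from $\nabla V(\epsilon x)$) plus another effective gain from integrating the slowly-varying, spatially-separated overlap of $V_\epsilon$ and $\phi$ against the dispersive flow — this is precisely the "classical trajectory + small radiation" mechanism of \cite{FJGS}, and reproducing it uniformly in $\epsilon$ with the two-sided time constraint $t<C\min\{\epsilon^{-\delta},\epsilon^{-8+2\delta}\}$ (the second bound is what keeps the quadratic-in-$r$ error subdominant) is the technical heart of the proof.
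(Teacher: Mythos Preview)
Your set-up (steps one and two) is essentially the one the paper uses: insert the ansatz, impose symplectic orthogonality, and read off the modulation equations, with the small-parameter improvements coming from the fact that $V_\epsilon=\epsilon^2 V(\epsilon\cdot)$. The gap is entirely in your third step, and you have already put your finger on it: a naive Duhamel bound $\|r(t)\|\lesssim\int_0^t\|V_\epsilon\phi\|\,ds=\mathcal O(\epsilon^{2-\delta})$ is far too weak, and your suggested fix --- an ``oscillatory/dispersive'' gain of two extra powers of $\epsilon$ from $e^{it\mathscr H_2}P_c$ --- is not the mechanism and would not be available here (there is no time decay in the estimate $\sup_t\|e^{it\mathscr H_2}P_c\|<\infty$ of Proposition~\ref{pro-21}(III), and nothing in \cite{FJGS} uses dispersive decay to gain powers of $\epsilon$).

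The two missing ingredients are the following. First, the extra $\epsilon^2$ is purely algebraic: once the zeroth and first Taylor coefficients $V_\epsilon(a)$ and $\nabla V_\epsilon(a)\cdot x$ have been absorbed into the modulation equations for $\dot\gamma$ and $\dot\upsilon$ (which is exactly what produces the leading terms in \eqref{mol-1}), the residual force on $r$ is $\mathcal R_V\phi$ with
\[
\mathcal R_V(x):=V_\epsilon(x+a)-V_\epsilon(a)-\nabla V_\epsilon(a)\cdot x=\mathcal O(\epsilon^4|x|^2),
\]
so the effective forcing is $\mathcal O(\epsilon^4)$, not $\mathcal O(\epsilon^2)$. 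Second, the paper does \emph{not} close the estimate via Duhamel at all; it uses the Lyapunov functional $\mathcal E(\varphi)-\mathcal E(\phi_\mu)$ (with $\varphi=\mathcal S_{a\upsilon\gamma}^{-1}u$ and $\mathcal E$ as in \eqref{func}), for which one computes
\[
\partial_t\big(\mathcal E(\varphi)-\mathcal E(\phi_\mu)\big)=\mathcal O\big(|\alpha|\,\|r\|_{H^1}^2+\epsilon^4\|r\|_{H^1}+\epsilon^3\|r\|_{H^1}^2\big),
\]
and combines this with the coercivity $\mathcal E(\phi_\mu+r)-\mathcal E(\phi_\mu)\ge \rho\|r\|_{H^1}^2-c\|r\|_{H^1}^3$ on the symplectic complement. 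Integrating in $t$ gives $\rho\|r\|^2\lesssim t\,\epsilon^4\|r\|+t(\epsilon^3+|\alpha|)\|r\|^2+\|r\|^3$; restricting to $t\lesssim(\epsilon^\delta+|\alpha|)^{-1}$ makes the middle term absorbable and yields $\|r\|_{H^1}\lesssim\epsilon^{4-\delta}$, after which \eqref{5-28} gives $|\alpha|\lesssim\epsilon^{8-2\delta}$. The second time constraint $t\lesssim\epsilon^{-8+2\delta}$ then guarantees $|\alpha|\,t\lesssim 1$, closing the bootstrap. In short: replace your Duhamel/linear-stability step by the energy--coercivity argument of \cite{FJGS}, and replace the vague ``dispersive gain'' by the concrete Taylor remainder $\mathcal R_V=\mathcal O(\epsilon^4)$.
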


\begin{rem}{\rm  Since our potential $V_{\epsilon}$ is flat of size $\mathcal{O}(\epsilon^{-1})$ and also small in $L^{\infty}$ of size $\mathcal{O}(\epsilon^{2})$, the existence time interval of solution  and the estimates for the remainder terms in $(\ref{sol-1})$ and $(\ref{mol-1})$ are slightly better than the ones in \cite{FJGS}, where the authors only assume potential is flat and not necessarily to be small.

}
\end{rem}

\subsection{Post-interactions after $T_\epsilon$}\label{sec-2-3}

The modulations equations (\ref{mol-1}) for parameters $\sigma$ in Theorem \ref{thm-1}
shows that the moving solitary wave hits a small and flat potential, it moves almost along classic trajectory. Since the potential is a smooth bump function (one can even make it radial), we would expect that the solitary wave will move out of the impact of the potential. In fact, by using the modulation equation (\ref{mol-1}), it could be realized by the time $\mathcal{O}(\epsilon^{-2})$ if one choose quantity of initial position $\bar{a}_{0}$ and velocity $\bar{\upsilon}_{0}$ are of order $1$.

Thus in the following we assume $|\bar{a}_{0}|=|\bar{\upsilon}_{0}|=\mathcal{O}(1)$ and choose $\delta=\delta_{0}$ with some $2<\delta_{0}<\frac{5}{2}$ and $\epsilon\ll1$ in Theorem \ref{thm-1}.
Now we consider equation (\ref{Sch eq-1}) from time $C\epsilon^{-\delta_{0}}$ to $+\infty$. Let us begin with the equation (\ref{Sch eq-1}) at time $T_{0}=C\epsilon^{2-\delta_{0}}\epsilon^{-2}=T\epsilon^{-2}$ for large constant $T$,
\begin{eqnarray}\label{Sch eq-3}
&&i\partial_{t}u=-\frac{1}{2}\Delta u+\epsilon^{2}V(\epsilon\cdot)\psi-F(|u|^{2})u\nonumber\\
&&u(\cdot,T_{0})=u_{T_{0}}=e^{i\upsilon_{T_{0}}\cdot x+i\gamma_{T_{0}}}\big(\phi(x-a_{T_{0}},\mu_{T_{0}})+r(x-a_{T_{0}},T_{0})\big).
\end{eqnarray}
with
\begin{eqnarray}\label{eq-21}
&&\big|\upsilon_{T_{0}}\big|=\big|\upsilon(T_{0})\big|=c_{\bar{\upsilon}_{0},V}\epsilon^{3-\delta_{0}}+\mathcal {O}(\epsilon^{8-3\delta_{0}})\geq C_{1}\epsilon\nonumber\\
&&\big|a_{T_{0}}\big|=\big|a(T_{0})\big|=c_{\bar{a}_{0},V}\epsilon^{3-2\delta_{0}}+\mathcal {O}(\epsilon^{8-4\delta_{0}})\geq C_{2}\epsilon^{-1}\nonumber\\
&&\gamma_{T_{0}}=\gamma(T_{0})=\gamma_{0}+\mathcal {O}(\epsilon^{8-2\delta_{0}})\nonumber\\
&&\mu_{T_{0}}=\mu(T_{0})=\mu_{0}+\mathcal {O}(\epsilon^{8-2\delta_{0}}),
\end{eqnarray}
where $C_{1}$ and $C_{2}$ are large constants.
We start (\ref{Sch eq-3}) at $t=0$ and rewrite it as
 \begin{eqnarray}\label{Sch eq-4}
&&i\partial_{t}u=-\frac{1}{2}\Delta u+V_{\epsilon}u-F(|u|^{2})u\nonumber\\
&&u(\cdot,0)=u_{0}=e^{i\upsilon_{T_{0}}\cdot x+i\gamma_{T_{0}}}\big(\phi(x-a_{T_{0}},\mu_{T_{0}})+r(x-a_{T_{0}},T_{0})\big),
\end{eqnarray}
where $V_{\epsilon}(x)=\epsilon^{2}V(\epsilon x)$,
the parameter $\sigma_{T_{0}}=\{a_{T_{0}},\upsilon_{T_{0}},\gamma_{T_{0}},\mu_{T_{0}}\}$ satisfies (\ref{eq-21}) and $r$ satisfies (\ref{r-1}) with $\delta=\delta_{0}$.

Notice that the support of $V_{\epsilon}$ is of $\epsilon^{-1}$ with $\epsilon\ll1$, by (\ref{mol-1}), (\ref{eq-21}) and the observation that the solitary wave  moves almost along classic trajectory, we know that at the solitary wave moves away from the potential and at $t=0$ in equation (\ref{Sch eq-4}) they almost separate from each other. Thus it is reasonable to assume
\begin{eqnarray}\label{eq-211}
\big|a_{T_{0}}+\upsilon_{T_{0}}t\big|\geq c_{1}\epsilon^{-1}+c_{0}\epsilon t,
\end{eqnarray}
with some large positive constants $c_{0}$ and $c_{1}$. This also means in equation (\ref{Sch eq-4}), the soliton already sits out of the impact of potential at time $t=0$
and the distance between the centers of moving soliton and potential become far away from each other as time goes.

To deal with equation (\ref{Sch eq-4}), we first linearize it around soliton.
Let $u(x,t)$ be the solution near moving soliton and make ansatz
\begin{eqnarray} \label{ansatz}
u(x,t)=e^{i\theta(x,t;\sigma(t))}\phi(x-y(t;\sigma(t)),\mu(t))+R(x,t):=w(x,t;\sigma(t))+R(x,t).
\end{eqnarray}
Here
\begin{eqnarray}\label{ex-21}
&&\theta(x,t;\sigma(t))=\upsilon(t)\cdot x-\int_{0}^{t}\big(\dot{\upsilon}(s)\cdot y(s;\sigma(s))+\frac{|\upsilon(s)|^{2}}{2}-\mu(s)\big)ds+\gamma(t)\nonumber\\
&&y(t;\sigma(t))=\int_{0}^{t}\upsilon(s)ds+a(s).
\end{eqnarray}
We will write
\begin{eqnarray}\label{ex-24}
w(\sigma(t))=w(x,t;\sigma(t))=e^{i\theta(\sigma(t))}\phi(\sigma(t))
\end{eqnarray}
where
\begin{eqnarray}\label{ex-22}
\theta(\sigma(t))=\theta(x,t;\sigma(t))
\end{eqnarray}
 and
\begin{eqnarray}\label{ex-23}
\phi(\sigma(t))=\phi(x-y(t;\sigma(t)),\mu(t)),
\end{eqnarray}
It is easy to see
\begin{eqnarray}\label{eq-22}
i\partial_{t}\big(w(\sigma(t))+R(x,t)\big)&=&-\big(\dot{\upsilon}(t)\cdot (x-y(t;\sigma(t)))-\frac{1}{2}|\upsilon(t)|^{2}+\mu(t)+\dot{\gamma}(t)\big)w(\sigma(t))\nonumber\\
&&\ \ \ \ \ -\ ie^{i\theta(\sigma(t))}\big(\upsilon(t)+\dot{a}(t)\big)\nabla \phi(\sigma(t))\nonumber\\
&&\ \ \ \ \ \ \ \ \ \ +\ ie^{i\theta(\sigma(t))}\dot{\mu}(t)\partial_{\mu}\phi(\sigma(t))+i\partial_{t}R\nonumber
\end{eqnarray}
and
\begin{eqnarray}\label{eq-23}
\Delta\big(w(\sigma(t))+R(x,t)\big)=|\upsilon(t)|^{2}w(\sigma(t))-e^{i\theta(\sigma(t))}\Delta\phi(\sigma(t))
-2ie^{i\theta(\sigma(t))}\upsilon(t)\nabla\phi(\sigma(t))-\Delta R,\nonumber
\end{eqnarray}
which imply the equation for $R(x,t)$,
\begin{eqnarray}\label{eq-24}
i\partial_{t}R&=&-\frac{\Delta}{2} R+V_{\epsilon}R-F(|w(\sigma(t))|^{2})R-F'(|w(\sigma(t))|^{2})|w(\sigma(t))|^{2}R-F'(|w(\sigma(t))|^{2})w(\sigma(t))^{2}\overline{R}\nonumber\\
&&\ \ \ \ \ \ +\ \big(\dot{\upsilon}(t)\cdot (x-y(t;\sigma(t)))w(\sigma(t))+\dot{\gamma}(t) w(\sigma(t))\nonumber\\
&&\ \ \ \ \ \ \ \ \ \ \ \ +\ i\dot{a}(t)e^{i\theta(\sigma(t))}\nabla \phi(\sigma(t))-ie^{i\theta(\sigma(t))}\dot{\mu}(t)\partial_{\mu}\phi(\sigma(t))\big)\nonumber\\
&&\ \ \ \ \ \ \ \ \ \ \ \ \ \ +\ V_{\epsilon}w(\sigma(t))+\mathcal {O}(|R|^{q})+\mathcal {O}(|R|^{2}|w(\sigma(t))|^{q-2}).
\end{eqnarray}
Rewriting the equation (\ref{eq-24}) as a system for $Z=(R,\overline{R})^{T}$,
\begin{eqnarray}\label{eq-25}
i\partial_{t}Z=\mathscr{H}(t,\sigma(t))Z+\mathscr{N}(\sigma(t))+\mathscr{V}_{\epsilon}(\sigma(t))+\mathcal {O}(|Z|^{q})+\mathcal {O}(|Z|^{2}|w(\sigma(t))|^{q-2}).
\end{eqnarray}
Here $\frac{7}{3}<q\leq 5$ and the matrix charge transfer model
\begin{eqnarray}\label{eq-26}
\mathscr{H}(t,\sigma(t))=\mathscr{H}_{0}+\mathcal {V}_{1\epsilon}+\mathcal {V}_{2}(t,\sigma(t))
\end{eqnarray}
with matrixes
\begin{eqnarray}\label{eq-26'}
\mathscr{H}_{0}=\left(
  \begin{array}{cc}
    -\frac{1}{2}\Delta& 0 \\
  0 &  \frac{1}{2}\Delta\\
  \end{array}
  \right),\ \ \ \
  \mathcal {V}_{1\epsilon}=\left(
  \begin{array}{cc}
    V_{\epsilon}& 0 \\
  0 &  -V_{\epsilon}\\
  \end{array}
  \right)
\end{eqnarray}
and
\begin{eqnarray}\label{eq-26''}
&&\mathcal {V}_{2}(t,\sigma(t))\nonumber\\
&=&\left(
  \begin{array}{cc}
   -F(|w(\sigma(t))|^{2})-F'(|w(\sigma(t))|^{2})|w(\sigma(t))|^{2}& -F'(|w(\sigma(t))|^{2})w(\sigma(t))^{2} \\
    F'(|w(\sigma(t))|^{2})\overline{w(\sigma(t))}^{2} &  F(|w(\sigma(t))|^{2})+F'(|w(\sigma(t))|^{2})|w(\sigma(t))|^{2} \\
  \end{array}
  \right).\nonumber
\end{eqnarray}
We would take both $\mathscr{N}(\sigma(t))$ and $\mathscr{V}_{\epsilon}(\sigma(t))$ as nonlinear terms, which are interpreted as
\begin{eqnarray}\label{eq-27}
\mathscr{N}(\sigma(t))=\left(
\begin{array}{c}
f\\
-\overline{f}
\end{array}
\right)
\end{eqnarray}
with
\begin{eqnarray}\label{eq-28}
f=\dot{\upsilon}(t)\cdot (x-y(t;\sigma(t))) w(\sigma(t))+\dot{\gamma}(t) w(\sigma(t))+i\dot{a}(t)e^{i\theta(\sigma(t))}\nabla \phi(\sigma(t))
-ie^{i\theta(\sigma(t))}\dot{\mu}(t)\partial_{\mu}\phi(\sigma(t))\nonumber
\end{eqnarray}
and
\begin{eqnarray}\label{eq-29}
\mathscr{V}_{\epsilon}(\sigma(t))=\left(
\begin{array}{c}
V_{\epsilon}w(\sigma(t))\\
-V_{\epsilon}\overline{w(\sigma(t))}
\end{array}
\right).
\end{eqnarray}

\begin{defi}\label{defi-2}
Let $\sigma(t)$ be an admissible path and $\theta(x,t;\sigma(t))$ and $y(t;\sigma(t))$ be defined as in (\ref{ex-21}).
Then we define
\begin{eqnarray}
\xi_{1}(x,t;\sigma(t))=\left(
\begin{array}{c}
e^{i\theta(x,t;\sigma(t))}\phi(x-y(t;\sigma(t)),\mu(t))\\
e^{-i\theta(x,t;\sigma(t))}\phi(x-y(t;\sigma(t)),\mu(t))
\end{array}
\right),
\nonumber
\end{eqnarray}
\begin{eqnarray}
\xi_{2}(x,t;\sigma(t))=\left(
\begin{array}{c}
ie^{i\theta(x,t;\sigma(t))}\partial_{\mu}\phi(x-y(t;\sigma(t)),\mu(t))\\
-ie^{-i\theta(x,t;\sigma(t))}\partial_{\mu}\phi(x-y(t;\sigma(t)),\mu(t))
\end{array}
\right),\nonumber
\end{eqnarray}
\begin{eqnarray}
\xi_{j}(x,t;\sigma(t))=\left(
\begin{array}{c}
ie^{i\theta(x,t;\sigma(t))}\partial_{x_{j-2}}\phi(x-y(t;\sigma(t)),\mu(t))\\
-ie^{-i\theta(x,t;\sigma(t))}\partial_{x_{j-2}}\phi(x-y(t;\sigma(t)),\mu(t))
\end{array}
\right),\ \ \ 3\leq j\leq 5\nonumber
\end{eqnarray}
and
\begin{eqnarray}
\xi_{j}(x,t;\sigma(t))=\left(
\begin{array}{c}
\big(x_{j-5}-y_{j-5}(t;\sigma(t)\big)e^{i\theta(x,t;\sigma(t))}\phi(x-y(t;\sigma(t)),\mu(t))\\
\big(x_{j-5}-y_{j-5}(t;\sigma(t)\big)e^{-i\theta(x,t;\sigma(t))}\phi(x-y(t;\sigma(t)),\mu(t))
\end{array}
\right),\ \ \ 6\leq j\leq 8\nonumber
\end{eqnarray}
\end{defi}


\begin{pro}\label{pro-22}
Let $Z$ satisfy the system (\ref{eq-26}) and $\xi_{j}$ be defined as in Definition \ref{defi-2}.
Suppose for all $t\geq0$,
\begin{eqnarray}\label{ex-210}
\big\langle Z(t),\xi_{j}(x,t;\sigma(t))\big\rangle=0
\end{eqnarray}
where $\xi_{j}$ is defined as in Definition \ref{defi-2}. Then we have the following system for parameter vector $\widetilde{\sigma}(t)$,
\begin{eqnarray}
&&-2i\dot{\mu}(t)\big\langle \partial_{\mu}\phi(\sigma(t)),\phi(\sigma(t))\big\rangle+\big\langle Z(t),\dot{\sigma}(t)\Phi_{1}(\sigma(t))\big\rangle\nonumber\\
&&\ \ \ =\ \big\langle\mathscr{V}_{\epsilon}(\sigma(t)),\xi_{1}(\cdot,t;\sigma(t))\big\rangle +
\big\langle\mathcal {O}(|Z|^{q})+\mathcal {O}(|Z|^{2}|w(\sigma(t))|^{q-2}),\xi_{1}(\cdot,t;\sigma(t))\big\rangle,\nonumber
\end{eqnarray}
\begin{eqnarray}
&&2i\dot{\gamma}(t)\big\langle \partial_{\mu}\phi(\sigma(t)),\phi(\sigma(t))\big\rangle+\big\langle Z(t),\dot{\sigma}(t)\Phi_{2}(\sigma(t))\big\rangle\nonumber\\
&&\ \ \ =\ \big\langle\mathscr{V}_{\epsilon}(\sigma(t)),\xi_{2}(\cdot,t;\sigma(t))\big\rangle +
\big\langle\mathcal {O}(|Z|^{q})+\mathcal {O}(|Z|^{2}|w(\sigma(t))|^{q-2}),\xi_{2}(\cdot,t;\sigma(t))\big\rangle,\nonumber
\end{eqnarray}
\begin{eqnarray}
&&\dot{\upsilon}_{j-3}(t)\big\|\phi(\sigma(t))\big\|^{2}_{L^{2}}+\big\langle Z(t),\dot{\sigma}(t)\Phi_{j}(\sigma(t))\big\rangle\nonumber\\
&&\ \ \ =\ \big\langle\mathscr{V}_{\epsilon}(\sigma(t)),\xi_{j}(\cdot,t;\sigma(t))\big\rangle +
\big\langle\mathcal {O}(|Z|^{q})+\mathcal {O}(|Z|^{2}|w(\sigma(t))|^{q-2}),\xi_{j}(\cdot,t;\sigma(t))\big\rangle,\ 3\leq j \leq 5,\nonumber
\end{eqnarray}
\begin{eqnarray}\label{ex-25}
&&\dot{a}_{j-5}(t)\big\|\phi(\sigma(t))\big\|^{2}_{L^{2}}+\big\langle Z(t),\dot{\sigma}(t)\Phi_{j}(\sigma(t))\big\rangle\nonumber\\
&&\ \ \ =\ \big\langle\mathscr{V}_{\epsilon}(\sigma(t)),\xi_{j}(\cdot,t;\sigma(t))\big\rangle +
\big\langle\mathcal {O}(|Z|^{q})+\mathcal {O}(|Z|^{2}|w(\sigma(t))|^{q-2}),\xi_{j}(\cdot,t;\sigma(t))\big\rangle,\ 6\leq j \leq 8,\nonumber\\
\end{eqnarray}
where $\frac{7}{3}<q\leq 5$, $w(\sigma(t))$ and $\phi(\sigma(t))$ are defined by (\ref{ex-24}) and (\ref{ex-23}) respectively and for all $j$
\begin{eqnarray}\label{eq-220}
\big|\Phi_{j}(\sigma(t))\big|\leq C\big(|\phi(\sigma(t))|+|D\phi(\sigma(t))|+|D^{2}\phi(\sigma(t))|+|D\partial_{\mu}\phi(\sigma(t))|\big).
\end{eqnarray}
\end{pro}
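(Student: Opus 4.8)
The plan is to differentiate in $t$ the eight orthogonality constraints (\ref{ex-210}) and feed in the evolution equation (\ref{eq-25}) for $Z$. For each $j$,
\begin{eqnarray}
0=\frac{d}{dt}\big\langle Z(t),\xi_j(\cdot,t;\sigma(t))\big\rangle=\big\langle\partial_tZ,\xi_j\big\rangle+\big\langle Z,\partial_t\xi_j\big\rangle,\nonumber
\end{eqnarray}
and substituting $\partial_tZ=-i\big(\mathscr{H}(t,\sigma(t))Z+\mathscr{N}(\sigma(t))+\mathscr{V}_{\epsilon}(\sigma(t))+\mathcal{O}(|Z|^{q})+\mathcal{O}(|Z|^{2}|w(\sigma(t))|^{q-2})\big)$ and moving $\mathscr{H}(t,\sigma(t))$ onto the test vector gives
\begin{eqnarray}
-i\big\langle Z,\mathscr{H}^{\ast}(t,\sigma(t))\xi_j\big\rangle+\big\langle Z,\partial_t\xi_j\big\rangle=i\big\langle\mathscr{N}(\sigma(t)),\xi_j\big\rangle+i\big\langle\mathscr{V}_{\epsilon}(\sigma(t)),\xi_j\big\rangle+i\big\langle\mathcal{O}(|Z|^{q})+\mathcal{O}(|Z|^{2}|w(\sigma(t))|^{q-2}),\xi_j\big\rangle.\nonumber
\end{eqnarray}
Thus everything reduces to identifying the combination $-i\mathscr{H}^{\ast}(t,\sigma(t))\xi_j+\partial_t\xi_j$ and to computing the pairing $\langle\mathscr{N}(\sigma(t)),\xi_j\rangle$.

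The heart of the argument, and the step I expect to be the main obstacle, is the identity
\begin{eqnarray}
\partial_t\xi_j(\cdot,t;\sigma(t))=-i\,\mathscr{H}^{\ast}(t,\sigma(t))\,\xi_j(\cdot,t;\sigma(t))+\sum_{k=1}^{8}c_{jk}(t)\,\xi_k(\cdot,t;\sigma(t))+\dot\sigma(t)\cdot\Phi_j(\sigma(t))+E_j(t),\nonumber
\end{eqnarray}
where the $c_{jk}(t)$ are scalar functions, $\Phi_j$ obeys (\ref{eq-220}), and $E_j(t)$ is supported in $\{|x|\le\epsilon^{-1}\}$ where, by the separation bound (\ref{eq-211}), $\phi(\cdot-y(t;\sigma(t)),\mu(t))$ and all its derivatives are exponentially small, so that $E_j$ is negligible. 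To prove this one differentiates the vectors of Definition \ref{defi-2} directly via (\ref{ex-21}): the \emph{drift} contributions, produced by the terms $-\tfrac{|\upsilon|^{2}}{2}+\mu$ in $\partial_t\theta$ and $\upsilon$ in $\partial_ty$, reproduce $-i\mathscr{H}^{\ast}(t,\sigma(t))\xi_j$ up to the $V_{\epsilon}$-localized piece $E_j$ and up to the Jordan corrections $\propto\xi_k$ dictated by the relations $\mathscr{H}^{\ast}_{2}\xi_1=0$, $\mathscr{H}^{\ast}_{2}\xi_2=-i\xi_1$, $\mathscr{H}^{\ast}_{2}\xi_j=0$ $(3\le j\le 5)$, $\mathscr{H}^{\ast}_{2}\xi_j=i\xi_{j-3}$ $(6\le j\le 8)$ of Proposition \ref{pro-21}, transported to the moving soliton frame; here the Galilean covariance of $\mathscr{H}_0$ converts the boost phase $\upsilon\cdot x$ in $\theta$ into the first-order part of the moving-frame Hamiltonian, and the ground state equation (\ref{ground}) is used on $\phi$. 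The remaining pieces are linear in $\dot\upsilon,\dot a,\dot\gamma,\dot\mu$, and those not already of the form $\xi_k$ involve only $\phi,D\phi,D^{2}\phi$ and $D\partial_{\mu}\phi$, which is precisely (\ref{eq-220}). Feeding this in, the $-i\mathscr{H}^{\ast}\xi_j$ terms cancel, the terms $\sum_k c_{jk}\langle Z,\xi_k\rangle$ vanish by the orthogonality hypothesis (\ref{ex-210}), and $\langle Z,E_j\rangle$ is absorbed into the nonlinear error. The genuinely delicate bookkeeping is exactly matching the gauge/Galilean structure encoded in $\theta$ and $y$ against the first-order and potential parts of $\mathscr{H}(t,\sigma(t))$ in the soliton frame, and using (\ref{eq-211}) to discard the $V_{\epsilon}$-supported remainder.

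What survives is $-i\langle\mathscr{N}(\sigma(t)),\xi_j\rangle+\langle Z,\dot\sigma(t)\cdot\Phi_j(\sigma(t))\rangle$ on the left against the $\mathscr{V}_{\epsilon}$- and nonlinear pairings on the right. Since $\mathscr{N}$ depends linearly on $\dot\upsilon,\dot a,\dot\gamma,\dot\mu$ by (\ref{eq-27})--(\ref{eq-28}) and $\phi(\cdot,\mu)$ is even, all off-diagonal pairings against the root vectors drop out by parity, and a direct computation leaves only the diagonal ones: $\langle\mathscr{N},\xi_1\rangle$ reproduces (up to the normalization constants $n_\ell$ and an overall factor of $i$, which are absorbed into $\Phi_j$) the term $-2i\dot\mu\langle\partial_{\mu}\phi,\phi\rangle$; $\langle\mathscr{N},\xi_2\rangle$ the term $2i\dot\gamma\langle\partial_{\mu}\phi,\phi\rangle$; $\langle\mathscr{N},\xi_j\rangle$ the term $\dot\upsilon_{j-3}\|\phi\|^{2}_{L^{2}}$ for $3\le j\le 5$; and $\langle\mathscr{N},\xi_j\rangle$ the term $\dot a_{j-5}\|\phi\|^{2}_{L^{2}}$ for $6\le j\le 8$ (here one uses $\int(x-y)\phi^2=0$, $\int\nabla\phi\,\phi=0$, $\langle\partial_\mu\phi,\phi\rangle>0$ and the evenness of $\phi,\partial_\mu\phi$). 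Collecting the pieces yields exactly the four displayed families of modulation equations, with $\langle\mathscr{V}_{\epsilon}(\sigma(t)),\xi_j\rangle$ and the nonlinear pairings on the right-hand side and with $\Phi_j$ satisfying (\ref{eq-220}); once the identity of the second paragraph is in hand, this last collection is elementary.
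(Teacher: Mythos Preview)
The paper states Proposition~\ref{pro-22} without proof; it is the standard modulation-equation computation (cf.\ \cite{Per1}, \cite{RSS2}), and your approach---differentiate the constraints (\ref{ex-210}), insert (\ref{eq-25}), move $\mathscr{H}(t,\sigma(t))$ onto $\xi_j$, and use the root-space relations of Proposition~\ref{pro-21} transported to the moving frame---is exactly the expected one. Your identification of the drift part of $\partial_t\xi_j$ with $-i\mathscr{H}^\ast\xi_j$ up to Jordan corrections and $\dot\sigma$-terms, and the diagonal evaluation of $\langle\mathscr{N},\xi_j\rangle$ via the parity of $\phi$, are both correct.

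One point needs tightening. Your remainder $E_j$ is, up to sign, $\mathcal{V}_{1\epsilon}\xi_j$: the potential $V_\epsilon$ enters $\mathscr{H}^\ast(t,\sigma(t))$ but not $\partial_t\xi_j$, so it does not cancel. After pairing with $Z$ this produces $\langle Z,\mathcal{V}_{1\epsilon}\xi_j\rangle$, which is \emph{linear} in $Z$ and therefore cannot be ``absorbed into the nonlinear error'' $\mathcal{O}(|Z|^q)$ as you claim. Nor may you invoke the separation bound (\ref{eq-211}) to kill it, since that bound is not among the hypotheses of the proposition. This term is in fact silently omitted from the proposition as stated in the paper; the honest bookkeeping places it alongside $\langle\mathscr{V}_\epsilon(\sigma(t)),\xi_j\rangle$ on the right-hand side (both are $V_\epsilon$ against a vector exponentially localized near $y(t;\sigma(t))$). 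In the only application, Proposition~\ref{pro-23}, both contributions are then bounded by $C\epsilon^k$ for every $k$ via (\ref{eq-211}), so nothing downstream changes---but your write-up should record the term rather than mislabel it.
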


For post-interaction region, we have the following statement:
\begin{thm}\label{thm-1-1}
Let $F$ as in (\ref{ground}) with a small fixed parameter $\theta>0$. 
Assume that the linearized operator $\mathscr{H}_{2}(\sigma)$ defined by (\ref{eq-214}) satisfies the spectral condition for all $\sigma\in \mathbb{R}^{8}$ with $|\sigma-\sigma(0)|<c$ and $\epsilon\ll 1$. Then solution to equation (\ref{Sch eq-4}) is of form
\begin{eqnarray} \label{ex1-1-1}
u(x,t)=w(x,t;\sigma(t))+R(x,t)
\end{eqnarray}
with $w(x,t;\sigma(t))$ defined as in (\ref{ansatz}),
\begin{eqnarray}\label{ex1-2-1}
\|R\|_{L^{2}_{t}W^{1,6}_{x}\cap L^{\infty}_{t}H^{1}_{x}}\lesssim \epsilon^{\alpha}\ \ \ {\rm and}\ \ \ \|\dot{\sigma}\|_{L_{t}^{1}}\lesssim \epsilon^{2\alpha},
\end{eqnarray}
for some $0<\alpha<4-\delta_{0},\ 2<\delta_{0}<\frac{5}{2}$. Moreover, there exist $\sigma_{+}$ and $u_{+}\in H^{1}$ such that
\begin{eqnarray}\label{ex1-3-1}
\|u-w(x,t;\sigma_{+})-e^{it\Delta}u_{+}\|_{H^{1}}\rightarrow 0,\ \ \ t\rightarrow +\infty,
\end{eqnarray}
where $w(x,t;\sigma_{+})=e^{i\theta_{+}(x,t)}\phi(x-\int_{0}^{t}\upsilon(s)ds-a_{+},\mu_{+})$ with
$$\theta_{+}(x,t)=\upsilon_{+}\cdot x-\int_{0}^{t}\big(\dot{\upsilon}(s)\cdot y(s;\sigma(s))+\frac{|\upsilon(s)|^{2}}{2}-\mu(s)\big)ds+\gamma_{+}.$$
\end{thm}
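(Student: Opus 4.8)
The plan is to run a bootstrap/continuity argument for the coupled system consisting of the Duhamel equation for $Z=(R,\overline{R})^T$ in (\ref{eq-25}) and the modulation ODEs (\ref{ex-25}) for $\widetilde\sigma(t)$, using the $\epsilon$-uniform Strichartz bounds of Theorem \ref{thm-2} as the linear engine. First I would fix the orthogonality conditions $\langle Z(t),\xi_j(\cdot,t;\sigma(t))\rangle=0$ of Proposition \ref{pro-22}; by the convexity condition $\langle\partial_\mu\phi,\phi\rangle>0$ and the fact that $\|\phi(\sigma(t))\|_{L^2}^2$ stays bounded away from zero on $|\sigma-\sigma(0)|<c$, the left-hand sides of the four blocks in (\ref{ex-25}) are invertible in $\dot\sigma(t)$, so these equations can be solved for $\dot\sigma(t)$ in terms of $Z(t)$, $\mathscr{V}_\epsilon(\sigma(t))$ and the nonlinear terms, yielding the schematic bound
\begin{eqnarray}
|\dot\sigma(t)|\lesssim \|\mathscr{V}_\epsilon(\sigma(t))\|_{L^2}+\||Z(t)|^q+|Z(t)|^2|w(\sigma(t))|^{q-2}\|_{\text{loc}}+|\dot\sigma(t)|\,\|Z(t)\|_{\text{loc}},\nonumber
\end{eqnarray}
where the last term (coming from the $\langle Z,\dot\sigma\Phi_j\rangle$ piece and the bound (\ref{eq-220})) is absorbed on the left once $\|Z\|$ is small. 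I set up the bootstrap on the quantities $\mathcal{X}(T):=\|R\|_{L^2_t([0,T])W^{1,6}_x\cap L^\infty_t H^1_x}$ and $\mathcal{Y}(T):=\|\dot\sigma\|_{L^1_t([0,T])}$, with target $\mathcal{X}\le C\epsilon^\alpha$ and $\mathcal{Y}\le C\epsilon^{2\alpha}$ for a suitable $0<\alpha<4-\delta_0$; the initial data obey $\|R(\cdot,0)\|_{H^1}=\mathcal O(\epsilon^{4-\delta_0})$ by (\ref{r-1}), which is $\ll\epsilon^\alpha$.

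The core estimate is applying Theorem \ref{thm-2} to (\ref{eq-25}). The source $F$ there is $\mathscr{N}(\sigma(t))+\mathscr{V}_\epsilon(\sigma(t))+\mathcal O(|Z|^q)+\mathcal O(|Z|^2|w|^{q-2})$, and $B$ in (\ref{eq-302}) is controlled by $\|P_b(t)Z(t)\|_{L^2_tL^6_x}$, which the orthogonality conditions and the exponential decay of $\phi$ make comparable to a local-in-space norm of $Z$ — and that in turn to $\mathcal{X}(T)$. The terms are estimated as follows. The nonlinear terms $\mathcal O(|Z|^q)$, $q>7/3$, and $\mathcal O(|Z|^2|w|^{q-2})$ are placed in a dual Strichartz norm $L^{\tilde p'}_tL^{\tilde q'}_x$ via Hölder and Sobolev embedding in $\R^3$ (using $W^{1,6}\hookrightarrow L^\infty$), producing a superlinear gain $\mathcal{X}(T)^q$ or $\mathcal{X}(T)^2$ times lower-order factors — these close the bootstrap provided $q\alpha>\alpha$, i.e. automatically, and $2\alpha>\alpha$. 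The potential term $\mathscr{V}_\epsilon(\sigma(t))=(V_\epsilon w,-V_\epsilon\overline w)^T$ is where the separation assumption (\ref{eq-211}) enters crucially: since $\mathrm{supp}\,V_\epsilon\subset B(0,\epsilon^{-1})$ while the soliton is centered at $y(t;\sigma(t))$ with $|y(t)|\ge c_1\epsilon^{-1}+c_0\epsilon t$, the overlap $V_\epsilon(x)\phi(x-y(t),\mu(t))$ is exponentially small — of size $\epsilon^2 e^{-c(\epsilon^{-1}+\epsilon t)}$ pointwise — so its dual-Strichartz norm in time is $\mathcal O(\epsilon^{N})$ for any $N$, hence harmless. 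The term $\mathscr{N}(\sigma(t))$ is linear in $\dot\sigma(t)$ (with coefficients bounded by $\phi$ and its derivatives times polynomial weights), so its dual Strichartz norm is $\lesssim\mathcal{Y}(T)\lesssim\epsilon^{2\alpha}$; combined with $\|Z_0\|_{L^2}\ll\epsilon^\alpha$ and $B\lesssim\mathcal{X}(T)$ being absorbed, Theorem \ref{thm-2} yields $\mathcal{X}(T)\le C_0\epsilon^\alpha+C_0\mathcal{X}(T)^{\min(q,2)}+(\text{small }\epsilon\text{ powers})$, which closes for $\epsilon\ll1$. Feeding this $\mathcal{X}$-bound back into the modulation inequality closes $\mathcal{Y}(T)\lesssim\epsilon^{2\alpha}+\epsilon^{q\alpha}\lesssim\epsilon^{2\alpha}$. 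A standard continuity argument then promotes these to all $T$, and in particular keeps $|\sigma(t)-\sigma(0)|\lesssim\epsilon^{2\alpha}<c$ so the spectral hypotheses remain valid, giving global existence and (\ref{ex1-2-1}).

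For the scattering statement (\ref{ex1-3-1}): from $\|\dot\sigma\|_{L^1_t}\lesssim\epsilon^{2\alpha}<\infty$ the parameters converge, $\sigma(t)\to\sigma_+=\{a_+,\upsilon_+,\gamma_+,\mu_+\}$, at an integrable rate, which identifies the limiting phase $\theta_+(x,t)$ as stated and gives $\|w(x,t;\sigma(t))-w(x,t;\sigma_+)\|_{H^1}\to0$. For the radiation, one writes $Z$ via Duhamel for the free matrix evolution $e^{-it\mathscr{H}_0}$, with the difference $\mathscr{H}(t,\sigma(t))-\mathscr{H}_0=\mathcal V_{1\epsilon}+\mathcal V_2(t,\sigma(t))$ acting as an (effectively localized) source; the $L^2_tL^6_x$ control of $Z$ together with the spatial localization of $\mathcal V_{1\epsilon}$ (compact support) and $\mathcal V_2$ (exponential decay around the moving soliton) makes the Duhamel integrand integrable in $L^1_tL^2_x$ after pairing, so $e^{it\mathscr{H}_0}Z(t)$ is Cauchy in $L^2$ and converges to some $Z_+$; undoing the vector/modulation change of variables produces $u_+\in H^1$ with $u-w(\cdot,t;\sigma_+)-e^{it\Delta}u_+\to0$ in $H^1$. \textbf{The main obstacle} is not any single estimate but making the bootstrap self-consistent with $\epsilon$-uniform constants: every application of Theorem \ref{thm-2} and of Proposition \ref{pro-22} must be checked to have $\epsilon$-independent implied constants (this is exactly why the $\epsilon$-uniform Strichartz theorem was proved), and one must verify that the bootstrap exponent $\alpha$ can be chosen simultaneously below $4-\delta_0$ (so the initial data is consistent), positive, and such that the superlinear nonlinear feedback $\mathcal{X}^{\min(q,2)}$ genuinely beats $\mathcal{X}$ for small $\epsilon$ — together with checking that the $L^1_t$-control of $\dot\sigma$ (rather than pointwise decay, unavailable here) is actually enough to feed back into both the modulation system and the scattering argument.
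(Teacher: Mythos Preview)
Your overall architecture matches the paper's: bootstrap on $\|Z\|_{L^2_t W^{1,6}_x\cap L^\infty_t H^1_x}$ and $\|\dot\sigma\|_{L^1_t}$, close the modulation system via Proposition \ref{pro-22} and the separation assumption, close the $Z$-estimate via Theorem \ref{thm-2}, then scatter. Your treatment of $\mathscr{V}_\epsilon(\sigma(t))$, $\mathscr{N}(\sigma(t))$, the power nonlinearities, and the scattering step are all essentially what the paper does.

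There is, however, a real gap in your handling of the quantity $B$ from (\ref{eq-302}). You say $B=\|P_b(t)Z(t)\|_{L^2_tL^6_x}$ is ``comparable to a local-in-space norm of $Z$'' and hence $\lesssim\mathcal X(T)$, to be absorbed. This does not close: Theorem \ref{thm-2} gives $\mathcal X(T)\lesssim\|Z_0\|+\|F\|+B$ with an $\epsilon$-independent but $O(1)$ implied constant, and $B\lesssim\mathcal X(T)$ with an $O(1)$ constant just puts $C\,\mathcal X(T)$ on the right with $C$ not small. The point you are missing is that the projector $P_b(t)$ in (\ref{eq-45}) is built from a \emph{reference} path $\widetilde\sigma(t)$ (essentially the straight-line path through $\sigma_{T_0}$, see (\ref{ex-226})), whereas the orthogonality conditions (\ref{ex-210}) are imposed with respect to the true path $\sigma(t)$. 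So $P_b(t)Z(t)$ is not zero, but it is controlled by the mismatch: the families $\widetilde\xi_j(\cdot,t;\widetilde\sigma(t))$ and $\xi_j(\cdot,t;\sigma(t))$ differ by $O(\|\dot\sigma\|_{L^1_t})$ in the relevant localized norms (this is the content of Lemma \ref{le-43}), and one obtains $B\lesssim\|Z\|_{L^2_tL^6_x}\,\|\dot\sigma\|_{L^1_t}\lesssim\epsilon^{3\alpha}$, which is strictly higher order and now genuinely harmless in the bootstrap. Correspondingly, the paper first rewrites (\ref{eq-25}) as (\ref{eq-224}) with the reference Hamiltonian $\mathscr{H}(t,\widetilde\sigma(t))$ on the left (this is the Hamiltonian to which Theorem \ref{thm-2} actually applies), moving the difference $(\mathscr{H}(t,\sigma(t))-\mathscr{H}(t,\widetilde\sigma(t)))Z$ into the source $F$; that term is again $O(\|\dot\sigma\|_{L^1_t})\cdot\|Z\|$ in dual Strichartz norm and closes the same way. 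Without this reference-path device---or some equivalent mechanism that makes $B$ \emph{small} rather than merely bounded by $\mathcal X(T)$---your bootstrap on $Z$ does not close.
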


\section{The proof of Theorem \ref{thm-1}}
\setcounter{equation}{0}

In this section, to make our paper self-contained, we will sketch the proof of Theorem \ref{thm-1} by using the method of  \cite{FJGS}. We first note that by the spectral assumptions, it is easy to verify all the assumptions in \cite{FJGS}. And then the proof of Theorem \ref{thm-1} will be divided into several subsections.

\subsection{Hamiltonian and solitary manifold}

Consider nonlinear Schr\"{o}dinger equation  (\ref{Sch eq-1})
\begin{eqnarray}
i\partial_{t}u=-\frac{1}{2}\Delta u+\epsilon^{2}V(\epsilon\cdot)u-F(|u|^{2})u\nonumber
\end{eqnarray}
and define its associated Hamiltonian functional on $H^{1}(\mathbb{R}^{3},\mathbb{C})$
\begin{eqnarray}\label{eng-func}
\mathcal {W}(u):=\frac{1}{4}\int_{\mathbb{R}^{3}}|\nabla u|^{2}dx+\frac{1}{2}\int_{\mathbb{R}^{3}}V_{\epsilon}|u|^{2}dx-\mathcal {F}(u)
\end{eqnarray}
where $V_{\epsilon}(x)=\epsilon^{2}V(\epsilon x)$ and $\mathcal{F}'(u)=F(|u|^{2})u$. Here let us review some basic facts of $H^{1}(\mathbb{R}^{3},\mathbb{C})$.
It is equipped with form
\begin{eqnarray}\label{5-1}
\omega(u,v):=\Im \int_{\mathbb{R}^{3}}u\bar{v}dx,
\end{eqnarray}
which is considered as a real space
\begin{eqnarray}\label{5-2}
H^{1}(\mathbb{R}^{3},\mathbb{R}^{2})=H^{1}(\mathbb{R}^{3},\mathbb{R})\oplus H^{1}(\mathbb{R}^{3},\mathbb{R}),\ \ u\mapsto(\Re u, \Im u).
\end{eqnarray}
It also has real inner product
\begin{eqnarray}\label{5-3}
\langle u,v\rangle:=\Re \int_{\mathbb{R}^{3}}u\bar{v}dx,
\end{eqnarray}
so that $\omega(u,v)=\langle u,\mathcal {J}^{-1}v\rangle$, where
\begin{eqnarray}\label{5-4}
\mathcal{J}=\left(
  \begin{array}{cc}
    0 & 1 \\
    -1 & 0 \\
  \end{array}
  \right)
\end{eqnarray}
is an complex structure  on $H^{1}(\mathbb{R}^{3},\mathbb{R}^{2})$ corresponding to the operator $i^{-1}$ on $H^{1}(\mathbb{R}^{3},\mathbb{C})$ and we also use $\mathcal{J}=i^{-1}$. Thus the equation (\ref{Sch eq-1}) can be written as
\begin{eqnarray}\label{5-5}
\partial_{t}u=\mathcal{J}\mathcal {W}'(u).
\end{eqnarray}
The Hamiltonian $\mathcal {W}(u)$ enjoys the conservation of energy and mass, that is, $\mathcal {W}(u)=const$ and $\mathcal {N}(u)=const$ with
\begin{eqnarray}\label{5-6}
\mathcal{N}=\int_{\mathbb{R}^{3}}|u|^{2}dx.
\end{eqnarray}
Notice that the groundstate  $\phi_{\mu}=\phi(\cdot,\mu)$ defined by
\begin{eqnarray}
-\frac{1}{2}\Delta\phi_{\mu}-F(|\phi_{\mu}|^{2})\phi_{\mu}=-\mu\phi_{\mu},\nonumber
\end{eqnarray}
for some $\mu\in I\subset \mathbb{R}$ is the critical point of the functional
\begin{eqnarray}\label{func}
\mathcal {E}(u):=\frac{1}{4}\int_{\mathbb{R}^{3}}|\nabla u|^{2}+\frac{\mu}{2}|u|^{2}dx-\mathcal {F}(u).
\end{eqnarray}
The Hessian of $\mathcal {E}(u)$ at $\phi_{\mu}$ is the operator
\begin{eqnarray}\label{5-7}
\mathcal{L}_{\mu}=-\frac{1}{2}\Delta+\mu-(F(|\phi_{\mu}|^{2})\phi_{\mu})'.
\end{eqnarray}
and
\begin{eqnarray}\label{5-8}
\mathcal{L}_{\mu}=\left(
  \begin{array}{cc}
    L_{+} & 0 \\
    0 & L_{-}\\
  \end{array}
  \right)
\end{eqnarray}
in complex and real expression, respectively. Here
\begin{eqnarray}\label{5-9}
L_{-}=-\frac{1}{2}\Delta+\mu-F(|\phi_{\mu}|^{2})
\end{eqnarray}
and
\begin{eqnarray}\label{5-10}
L_{+}=-\frac{1}{2}\Delta+\mu-2F'(|\phi_{\mu}|^{2})\phi_{\mu}^{2}-F(|\phi_{\mu}|^{2}).
\end{eqnarray}

Now we introduce the manifold of solitary waves. Let $\sigma:=(a, \upsilon, \mu, \gamma)$ and
\begin{eqnarray}\label{5-10}
\phi_{\sigma}:=\phi_{a\upsilon \mu\gamma}=\mathcal{S}_{a\upsilon\gamma}\phi_{\mu}=\mathcal{T}_{a}\mathcal{T}_{\upsilon}\mathcal{T}_{\gamma}\phi_{\mu}(x)
=e^{i\upsilon(x-a)+i\gamma}\phi_{\mu}(x-a)
\end{eqnarray}
where
\begin{eqnarray}\label{5-11}
\mathcal{T}_{a}u(x,t)=u(x-a,t),\ \mathcal{T}_{\upsilon}u(x,t)=e^{i\upsilon\cdot x}u(x,t)\ \text{and}\  \mathcal{T}_{\gamma}u(x,t)=e^{i\gamma}u(x,t).
\end{eqnarray}
The manifold of solitary waves is defined as
\begin{eqnarray}\label{5-12}
M_{s}:=\big\{\phi_{a\upsilon \mu\gamma}:\ (a, \upsilon, \mu, \gamma)\in \mathbb{R}^{3}\times\mathbb{R}^{3}\times[0,2\pi]\times I\big\},
\end{eqnarray}
and then the tangent space to this manifold at $\phi_{\mu}\in M_{s}$ is given by
\begin{eqnarray}\label{5-13}
T_{\phi_{\mu}}M_{s}:={\rm span}\big(z_{a}, z_{\upsilon}, z_{\gamma}, z_{\mu}\big),
\end{eqnarray}
where
\begin{eqnarray}\label{5-14}
z_{a}=-\nabla\phi_{\mu},\ z_{\upsilon}=-\mathcal{J}x\phi_{\mu},\ z_{\gamma}=-\mathcal{J}\phi_{\mu}\ {\rm and}\ z_{\mu}=\partial_{\mu}\phi_{\mu}.
\end{eqnarray}
Here we have to note that $\mathcal{J}=i^{-1}$ if one takes the complex expression of $\mathcal{L}_{\mu}$ and $\mathcal{J}$ is of form (\ref{5-4}) and $\phi_{\mu}=(\phi_{\mu},0)$ if
one uses the real representation of $\mathcal{L}_{\mu}$, we will use the complex representation of $\mathcal{L}_{\mu}$  in the rest of Section \ref{sec-5}.
Moreover, it follows
\begin{eqnarray}\label{5-15}
\mathcal{L}_{\mu}z_{a}=0,\ \mathcal{L}_{\mu}z_{\gamma}=0,\ \mathcal{L}_{\mu}z_{\upsilon}=iz_{t},\ \mathcal{L}_{\mu}z_{\mu}=iz_{\gamma}
\end{eqnarray}
and
\begin{eqnarray}\label{5-16}
\mathcal{L}^{2}_{\mu}z_{\upsilon}=0,\  \mathcal{L}^{2}_{\mu}z_{\mu}=0.
\end{eqnarray}

\subsection{Skew-orthogonal decomposition}
In this subsection, we will decomposition the solution to equation (\ref{Sch eq-1}) along manifold $M_{s}$ into a solitary wave and a fluctuation which
is skew-orthogonal to the soliton manifold and derive the equations for the fluctuation and parameters $\sigma=(a, \upsilon, \mu, \gamma)$. To this end, let us
define the $\delta-$ neighborhood
\begin{eqnarray}\label{5-17}
U_{\delta}=\big\{u\in H^{1}:\ \inf_{\sigma\in \Sigma_{0}}\|u-\phi_{\sigma}\|_{H^{1}}\leq \delta\big\},
\end{eqnarray}
where
\begin{eqnarray}\label{5-18}
\Sigma_{0}:=\mathbb{R}^{3}\times\mathbb{R}^{3}\times[0,2\pi]\times (I\setminus \partial I).\nonumber
\end{eqnarray}
Then we have the following so called skew-orthogonal decomposition for all $u\in U_{\delta}$ with small enough $\delta$ and refer the reader to \cite[Proposition 5.1]{FJGS} for the proof.
\begin{lem}\label{le-51}
Let $u\in U_{\delta}$ for sufficiently small $\delta>0$. There exists a unique $\sigma=\sigma(u)\in C(U_{\delta},\Sigma)$ such that
\begin{eqnarray}\label{5-19}
\omega(u-\phi_{\sigma},z)=\langle u-\phi_{\sigma},\mathcal {J}^{-1}z\rangle=0,\ \ \forall z\in T_{\phi_{\sigma}}M_{s}.
\end{eqnarray}
\end{lem}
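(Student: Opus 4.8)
The plan is to realize condition (\ref{5-19}) as the zero set of a finite-dimensional map and to invoke the implicit function theorem, then patch the local solutions together. Fix once and for all an enumeration $(z_k(\sigma))_{k=1}^{8}$ of the spanning set (\ref{5-14}) of $T_{\phi_\sigma}M_s$ (the three components of $z_a$, the three of $z_\upsilon$, then $z_\gamma$ and $z_\mu$), and define
\[
G:U_\delta\times\Sigma_0\longrightarrow\mathbb{R}^8,\qquad G(u,\sigma)_k=\omega\big(u-\phi_\sigma,z_k(\sigma)\big)=\big\langle u-\phi_\sigma,\mathcal{J}^{-1}z_k(\sigma)\big\rangle.
\]
Then the decomposition condition (\ref{5-19}) for a given $u$ is exactly the equation $G(u,\sigma)=0$. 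The map $G$ is smooth in both variables (the soliton family $\sigma\mapsto\phi_\sigma$ is smooth into $H^1$ since $\phi_\mu$ decays exponentially), and $G(\phi_{\sigma_0},\sigma_0)=0$ for every $\sigma_0\in\Sigma_0$, so the task reduces to showing that the partial differential $\partial_\sigma G(\phi_{\sigma_0},\sigma_0)$ is invertible, with an inverse bounded uniformly in $\sigma_0$.

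Computing this differential: since $u-\phi_\sigma$ vanishes at the base point, the contribution of $\partial_\sigma z_k(\sigma)$ drops out and one is left with the matrix $A(\sigma_0)=\big[-\omega(z_\ell(\sigma_0),z_k(\sigma_0))\big]_{k,\ell}$, that is, up to sign the Gram matrix of the symplectic form $\omega$ restricted to $T_{\phi_{\sigma_0}}M_s$. Using (\ref{5-14}), the reality and radial symmetry of $\phi_\mu$, and the identity $\omega(f,\mathcal{J}g)=\langle f,g\rangle$, a direct computation shows that $A(\sigma_0)$ is block anti-diagonal: all pairings vanish except
\[
\omega(z_{a_j},z_{\upsilon_k})=-\tfrac12\,\delta_{jk}\,\|\phi_\mu\|_{L^2}^2,\qquad \omega(z_\gamma,z_\mu)=\big\langle\partial_\mu\phi_\mu,\phi_\mu\big\rangle,
\]
so that $\det A(\sigma_0)=\pm\big(\tfrac12\|\phi_\mu\|_{L^2}^2\big)^3\,\langle\partial_\mu\phi_\mu,\phi_\mu\rangle$. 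By the convexity condition $\langle\partial_\mu\phi_\mu,\phi_\mu\rangle>0$ recalled in Section \ref{sec2-1}, this determinant is nonzero, and since it is independent of $a,\upsilon,\gamma$ (by translation, Galilei and gauge invariance) while $\mu$ ranges over the relatively compact set $I\setminus\partial I$, it is bounded away from $0$ uniformly in $\sigma_0$. Hence $\partial_\sigma G(\phi_{\sigma_0},\sigma_0)$ is a uniform isomorphism of $\mathbb{R}^8$.

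The implicit function theorem then produces, for each $\sigma_0$, radii $\rho,\tau>0$ and a unique $C^1$ map $u\mapsto\sigma(u)$ on the ball $\{\|u-\phi_{\sigma_0}\|_{H^1}<\rho\}$ with $|\sigma(u)-\sigma_0|<\tau$ solving $G(u,\sigma)=0$; the uniform bound on $(\partial_\sigma G)^{-1}$ together with uniform smoothness estimates for $\phi_\sigma$ lets one take $\rho,\tau$ independent of $\sigma_0$. Choosing $\delta<\rho$, any $u\in U_\delta$ lies in such a ball around some $\sigma_0$, and local uniqueness of the IFT solution, combined with the fact that two admissible parameters for the same $u$ must be $\tau$-close (shrinking $\delta$ further if needed and using that $\sigma\mapsto\phi_\sigma$ is a proper embedding once the translation–boost–gauge symmetries are modded out), shows $\sigma(u)$ is well defined and continuous on all of $U_\delta$; this is \cite[Proposition 5.1]{FJGS}. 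The step I expect to require the most care is making the invertibility and the IFT neighbourhoods genuinely uniform over the noncompact parameter space $\Sigma_0$: one must use the translation, Galilei-boost and phase symmetries to reduce to $a=\upsilon=\gamma=0$, so that only the $\mu$-direction — ranging over a relatively compact interval — calls for a compactness argument, and one must check that global uniqueness does not break down along the unbounded $a$ and $\upsilon$ directions.
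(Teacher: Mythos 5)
Your proof is correct and follows essentially the same implicit-function-theorem strategy as \cite[Proposition 5.1]{FJGS}, which is what the paper itself cites for this lemma rather than giving its own argument. One minor slip: the quantity you write down, $\pm\bigl(\tfrac12\|\phi_\mu\|_{L^2}^2\bigr)^3\langle\partial_\mu\phi_\mu,\phi_\mu\rangle$, is the Pfaffian of the antisymmetric Gram matrix $A(\sigma_0)$, not its determinant (the determinant is its square, $\bigl(\tfrac12\|\phi_\mu\|_{L^2}^2\bigr)^6\langle\partial_\mu\phi_\mu,\phi_\mu\rangle^2$), but this does not affect the nonvanishing conclusion or the rest of the argument.
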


Now given a solution $u$ to equation (\ref{Sch eq-1}) such that $u\in U_{\delta}$, it follows from Lemma \ref{le-51}
\begin{eqnarray}\label{eq-5-20}
u-\phi_{\sigma}\bot \mathcal{J}T_{\phi_{\sigma}}M_{s}.
\end{eqnarray}
Set
\begin{eqnarray}\label{eq-5-21}
u-\phi_{\sigma}=\mathcal{S}_{a\upsilon\gamma}r\ \ {\rm and}\ \ \varphi:=\mathcal{S}_{a\upsilon\gamma}^{-1}u.
\end{eqnarray}
We introduce operators
\begin{eqnarray}\label{eq-5-22}
(\mathcal{K}_{1},\mathcal{K}_{2},\mathcal{K}_{3})=\nabla,\ (\mathcal{K}_{4},\mathcal{K}_{5},\mathcal{K}_{6})=-\mathcal{J}(x_{1},x_{2},x_{3}),\ \mathcal{K}_{7}=-\mathcal{J},\ \mathcal{K}_{8}=\partial_{\mu}
\end{eqnarray}
with coefficients
\begin{eqnarray}\label{eq-5-23}
&&(\alpha_{1},\alpha_{2},\alpha_{3})=\dot{a}-\upsilon,\ \ \ (\alpha_{1},\alpha_{2},\alpha_{3})=-\dot{\upsilon}-\nabla V_{\epsilon}(a),\nonumber\\
 &&\alpha_{7}=\mu-\frac{1}{2}\upsilon^{2}+\dot{a}\cdot \upsilon-V_{\epsilon}(a)-\dot{\gamma},\ \ \ \alpha_{8}=-\dot{\mu}.
\end{eqnarray}

\begin{lem}\label{le-52}
The fluctuation $r$ defined as in (\ref{eq-5-21}) satisfies the equation
\begin{eqnarray}\label{5-24}
\dot{r}=(\mathcal{J}\mathcal{L}_{\mu}+\sum_{j=1}^{7}\alpha_{j}\mathcal{K}_{j}+\mathcal{R}_{V})r+N_{\mu}(r)+\mathcal{J}\sum_{j=1}^{8}\alpha_{j}\mathcal{K}_{j}\phi_{\sigma}+\mathcal{R}_{V}\phi_{\sigma}
\end{eqnarray}
 and the parameter $\sigma$ satisfy the equations
\begin{eqnarray}\label{5-25}
&&\dot{a}_{k}=\upsilon_{k}+(m(\mu))^{-1}\big(\langle x_{k}\phi_{\mu},\mathcal {J}N_{\mu}(r)+\mathcal {J}\mathcal{R}_{V}r\rangle+\sum_{j=1}^{8}\langle\alpha_{j}\mathcal{K}_{j}x_{k}\phi_{\mu},r\rangle\big)\nonumber\\
&&\dot{\upsilon}_{k}=-\partial_{k}V_{\epsilon}(a)+(m(\mu))^{-1}\langle \partial_{k}\phi_{\mu},N_{\mu}(r)+\mathcal{R}_{V}r\rangle-\sum_{j=1}^{8}\langle\alpha_{j}\mathcal{K}_{j}\partial_{k}\phi_{\mu},\mathcal{J}r\rangle+\langle \partial_{k}\phi_{\mu},\mathcal{R}_{V}\phi_{\mu}\rangle\nonumber\\
&&\dot{\gamma}=\mu-\frac{1}{2}\upsilon^{2}+\dot{a}\cdot \upsilon-V_{\epsilon}(a)-(m'(\mu))^{-1}\langle \partial_{\mu}\phi_{\mu},N_{\mu}(r)+\mathcal{R}_{V}r\rangle-\sum_{j=1}^{8}\langle\alpha_{j}\mathcal{K}_{j}\partial_{\mu}\phi_{\mu},\mathcal{J}r\rangle
\nonumber\\
&&\ \ \ \ \ \ \ \ \ +\ \langle \partial_{\mu}\phi_{\mu},\mathcal{R}_{V}\phi_{\mu}\rangle\nonumber\\
&&\dot{\mu}=(m'(\mu))^{-1}\langle \phi_{\mu},\mathcal {J}N_{\mu}(r)+\mathcal {J}\mathcal{R}_{V}r\rangle-\sum_{j=1}^{8}\langle\alpha_{j}\mathcal{K}_{j}\phi_{\mu},r\rangle,\nonumber\\
\end{eqnarray}
where
\begin{eqnarray}\label{5-26}
\mathcal{R}_{V}:=V_{\epsilon}(x+a)-V_{\epsilon}(a)-\nabla V_{\epsilon}(a)\cdot x,\ \  \ \ m(\mu)=\int_{\mathbb{R}^{3}}|\phi_{\mu}|^{2}dx
\end{eqnarray}
and
\begin{eqnarray}\label{5-27}
N_{\mu}(r)=F(|r+\phi_{\mu}|^{2})(r+\phi_{\mu})-F(|\phi_{\mu}|^{2})(\phi_{\mu})-(F(|\phi_{\mu}|^{2})(\phi_{\mu}))'r.
\end{eqnarray}
Moreover, let $|\alpha|=\max_{j=1,\ldots,8}|\alpha_{j}|$ with $\alpha_{j}$ defined by (\ref{eq-5-23}), we have
\begin{eqnarray}\label{5-28}
|\alpha|=\mathcal {O}(|\alpha|\|r\|_{H^{1}}+\epsilon^{4}+\|r\|^{2}_{H^{1}}).
\end{eqnarray}
\end{lem}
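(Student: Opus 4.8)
The plan is to conjugate the flow $(\ref{5-5})$, $\partial_{t}u=\mathcal{J}\mathcal{W}'(u)$, by the symmetry group $\mathcal{S}_{a\upsilon\gamma}=\mathcal{T}_{a}\mathcal{T}_{\upsilon}\mathcal{T}_{\gamma}$, insert the ansatz $(\ref{eq-5-21})$, and close a coupled system for the fluctuation $r$ and the modulation parameters $\sigma$ using the skew-orthogonality from Lemma $\ref{le-51}$. Concretely, writing $\varphi=\mathcal{S}_{a\upsilon\gamma}^{-1}u=\phi_{\mu}+r$ and applying $\mathcal{S}_{a\upsilon\gamma}^{-1}$ to $(\ref{Sch eq-1})$, the time derivative $\partial_{t}\mathcal{S}_{a\upsilon\gamma}(\phi_{\mu}+r)$ generates, besides $\mathcal{S}_{a\upsilon\gamma}\dot r$ and $\mathcal{S}_{a\upsilon\gamma}\dot\mu\,\partial_{\mu}\phi_{\mu}$, exactly the infinitesimal-generator contributions $\dot a\cdot\nabla$, the Galilean term carrying $\dot\upsilon$, and $\dot\gamma$ acting on $\phi_{\mu}+r$. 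Collecting these against the stationary equation for $\phi_{\mu}$ produces the linearization $\mathcal{J}\mathcal{L}_{\mu}r$ together with the drift operators $\sum_{j}\alpha_{j}\mathcal{K}_{j}$, with the $\mathcal{K}_{j}$ of $(\ref{eq-5-22})$ and the coefficients $\alpha_{j}$ exactly those of $(\ref{eq-5-23})$. The potential term $V_{\epsilon}(x)u$ becomes $V_{\epsilon}(x+a)\varphi$ after conjugation; Taylor-expanding $V_{\epsilon}(x+a)=V_{\epsilon}(a)+\nabla V_{\epsilon}(a)\cdot x+\mathcal{R}_{V}$ with $\mathcal{R}_{V}$ as in $(\ref{5-26})$, absorbing the constant part into $\alpha_{7}$ and the linear part into $(\alpha_{4},\alpha_{5},\alpha_{6})=-\dot\upsilon-\nabla V_{\epsilon}(a)$, leaves precisely $\mathcal{R}_{V}$ acting on $\phi_{\sigma}$ and on $r$. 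Separating the terms genuinely nonlinear in $r$ into $N_{\mu}(r)$ of $(\ref{5-27})$ then yields $(\ref{5-24})$. The labor in this step is essentially bookkeeping: keeping track of which piece of $\partial_{t}\mathcal{S}_{a\upsilon\gamma}$ pairs with which term and of the $\mathcal{J}$'s coming from $i^{-1}$.

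For the modulation equations $(\ref{5-25})$ I would differentiate in $t$ each constraint $\omega(u-\phi_{\sigma},z)=0$, $z\in T_{\phi_{\sigma}}M_{s}$, equivalently pair $(\ref{5-24})$ against the vectors $z_{a},z_{\upsilon},z_{\gamma},z_{\mu}$ of $(\ref{5-14})$. Using the identities $(\ref{5-15})$--$(\ref{5-16})$ (so that $\mathcal{L}_{\mu}z_{a}=\mathcal{L}_{\mu}z_{\gamma}=0$ and $\mathcal{L}_{\mu}^{2}z_{\upsilon}=\mathcal{L}_{\mu}^{2}z_{\mu}=0$) together with the self-adjointness of $\mathcal{L}_{\mu}$ and the skew-orthogonality of $r$ to the tangent space, the leading linear term $\mathcal{J}\mathcal{L}_{\mu}r$ drops out of the relevant pairings, and one is left with a linear system for $\dot\sigma$ whose coefficient matrix is, to leading order, block-diagonal with diagonal entries $m(\mu)=\|\phi_{\mu}\|_{L^{2}}^{2}$ and $m'(\mu)$ (a positive multiple of $\langle\partial_{\mu}\phi_{\mu},\phi_{\mu}\rangle$). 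Invertibility for $|\mu-\mu_{0}|<c$ is guaranteed by the convexity condition $\langle\partial_{\mu}\phi,\phi\rangle>0$ recalled in Subsection $\ref{sec2-1}$, and solving the system produces $(\ref{5-25})$.

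Finally, for $(\ref{5-28})$ I read off from $(\ref{5-25})$ that each $\alpha_{j}$ is a sum of three types of terms: (a) pairings $\langle\,\cdot\,,N_{\mu}(r)+\mathcal{R}_{V}r\rangle$ of localized profiles against the nonlinearity, (b) pairings $\sum_{j}\langle\alpha_{j}\mathcal{K}_{j}(\cdot),r\rangle$, and (c) the source pairing $\langle\,\cdot\,,\mathcal{R}_{V}\phi_{\mu}\rangle$. Type (a) is $\mathcal{O}(\|r\|_{H^{1}}^{2})$ since $N_{\mu}(r)$ is at least quadratic in $r$ by $(\ref{5-27})$ and the smoothness of $F$ near $\phi_{\mu}$, while $\mathcal{R}_{V}$ is bounded (indeed small) and is integrated against the exponentially localized $\phi_{\mu},\partial_{\mu}\phi_{\mu},x_{k}\phi_{\mu}$; type (b) is $\mathcal{O}(|\alpha|\|r\|_{H^{1}})$ since each $\mathcal{K}_{j}$ applied to a localized profile stays in $L^{2}$; type (c) is the source of the $\epsilon^{4}$: from $V_{\epsilon}(x)=\epsilon^{2}V(\epsilon x)$ one has $D^{2}V_{\epsilon}(x)=\epsilon^{4}(D^{2}V)(\epsilon x)$ uniformly bounded, so Taylor's theorem gives $|\mathcal{R}_{V}(x)|\lesssim\epsilon^{4}|x|^{2}$ and hence $\|\mathcal{R}_{V}\phi_{\mu}\|_{L^{2}}\lesssim\epsilon^{4}\,\||x|^{2}\phi_{\mu}\|_{L^{2}}\lesssim\epsilon^{4}$ uniformly for $\mu$ near $\mu_{0}$. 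Summing the three contributions gives $|\alpha|\lesssim|\alpha|\|r\|_{H^{1}}+\epsilon^{4}+\|r\|_{H^{1}}^{2}$, which is $(\ref{5-28})$. I expect the conjugation bookkeeping of the first step to be the main obstacle; the rest follows the scheme of \cite{FJGS}, the only new input being the quantitative $\epsilon^{4}$ gain from the flatness and smallness of $V_{\epsilon}$.
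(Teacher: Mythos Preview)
Your proposal is correct and follows essentially the same approach as the paper: the paper's proof simply refers to the procedures in \cite{FJGS}, noting that one substitutes $V$ by $V_{\epsilon}$ and uses the key estimate $\mathcal{R}_{V}=V_{\epsilon}(x+a)-V_{\epsilon}(a)-\nabla V_{\epsilon}(a)\cdot x=\mathcal{O}(\epsilon^{4}x^{2})$, which is precisely the quantitative input you identify in step (c) as the source of the $\epsilon^{4}$ in $(\ref{5-28})$. Your outline is in fact a faithful unpacking of the \cite{FJGS} scheme (conjugation by $\mathcal{S}_{a\upsilon\gamma}$, modulation via differentiation of the skew-orthogonality constraints, and reading off the size of each $\alpha_{j}$ from $(\ref{5-25})$).
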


\begin{proof}
Actually, (\ref{5-24}), (\ref{5-25})  and (\ref{5-28})  are obtained by the same procedures as the ones in \cite{FJGS}. Precisely, one only need to substitute $V$ by $V_{\epsilon}$ and correspondingly use the estimate
\begin{eqnarray}\label{5-29}
\mathcal{R}_{V}:=V_{\epsilon}(x+a)-V_{\epsilon}(a)-\nabla V_{\epsilon}(a)\cdot x=\mathcal{O}(\epsilon^{4}x^{2}).\nonumber
\end{eqnarray}
in the whole proof.
\end{proof}

\subsection{The completion of proof }

We will use an approximate of Lyaponuv functional to obtain an explicit estimates for $r$ and $\sigma$ and then finish the proof of  Theorem \ref{thm-1}. The whole process is based on the analysis in \cite{FJGS}, except we have to keep track of extra $\epsilon$ which comes from the potential $V_{\epsilon}$.
Let recall the decomposition for $u\in U_{\delta}$,
\begin{eqnarray}\label{5-30}
\varphi:=\mathcal{S}_{a\upsilon\gamma}^{-1}u=\phi_{\mu}+r,
\end{eqnarray}
and then we prove that the Lyapunov functional $\mathcal{E}(\varphi)-\mathcal{E}(\phi_{\mu})$ is approximately conserved.

\begin{lem}\label{le-53}
Let $u\in U_{\delta}$ be the solution to equation (\ref{Sch eq-1}) and $\varphi$, $r$ and $\phi_{\mu}$ be defined as in (\ref{5-30}). Then
\begin{eqnarray}\label{5-33}
\partial_{t}(\mathcal{E}(\varphi)-\mathcal{E}(\phi_{\mu}))=\mathcal{O}(|\alpha|\|r\|^{2}_{H^{1}}+\epsilon^{4}\|r\|_{H^{1}}+\epsilon^{3}\|r\|^{2}_{H^{1}}).
\end{eqnarray}
\end{lem}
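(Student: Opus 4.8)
The plan is to differentiate $\mathcal{E}(\varphi)-\mathcal{E}(\phi_{\mu})$ in $t$ directly, using the decomposition $\varphi=\phi_{\mu}+r$ from \eqref{5-30} together with the evolution equations \eqref{5-24} for $r$ and \eqref{5-25} for the parameters. First I would expand
\[
\partial_{t}\big(\mathcal{E}(\varphi)-\mathcal{E}(\phi_{\mu})\big)=\langle \mathcal{E}'(\varphi),\dot{\varphi}\rangle-\dot{\mu}\,\partial_{\mu}\mathcal{E}(\phi_{\mu})-\dot{\mu}\,\tfrac12\langle\phi_{\mu},\phi_{\mu}\rangle,
\]
being careful that $\mathcal{E}$ itself depends on $\mu$ through the coefficient in \eqref{func}, so two $\dot\mu$-terms arise; one of them ($\dot{\mu}\,\partial_{\mu}\mathcal{E}(\phi_{\mu})$) vanishes because $\phi_\mu$ is a critical point of $\mathcal{E}(\cdot)=\mathcal{E}_\mu(\cdot)$, and the other is $\mathcal{O}(|\dot\mu|\,\|r\|_{H^1}^2)$ after using $\mathcal{E}'(\varphi)-\mathcal{E}'(\phi_\mu)=\mathcal{L}_\mu r+\mathcal{O}(\|r\|^2)$ and $\langle\mathcal{L}_\mu r + \text{(lin. part)}, \phi_\mu\rangle$ bookkeeping. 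Writing $\dot\varphi$ from \eqref{5-30} and \eqref{5-24}, I would substitute $\mathcal{E}'(\varphi)=\mathcal{L}_\mu r+N_\mu(r)$ (Taylor at $\phi_\mu$, using $\mathcal{E}'(\phi_\mu)=0$ and that $\mathcal{L}_\mu$ is the Hessian) so that the main term becomes $\langle \mathcal{L}_\mu r+N_\mu(r),\, \mathcal{J}\mathcal{L}_\mu r + \sum\alpha_j\mathcal{K}_j r + \mathcal{R}_V r + N_\mu(r) + \mathcal{J}\sum\alpha_j\mathcal{K}_j\phi_\sigma + \mathcal{R}_V\phi_\sigma\rangle$, and then cancel the structurally trivial pieces.

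The cancellations are the heart of the computation. The term $\langle \mathcal{L}_\mu r,\mathcal{J}\mathcal{L}_\mu r\rangle$ vanishes since $\mathcal{J}$ is skew-adjoint and $\mathcal{L}_\mu$ self-adjoint. The terms $\langle N_\mu(r), N_\mu(r)\rangle$ and $\langle \mathcal{L}_\mu r, N_\mu(r)\rangle$ together reconstruct the exact time-derivative of the "higher-order" part of $\mathcal{E}(\varphi)-\mathcal{E}(\phi_\mu)$ that is not quadratic, so these must be handled by instead noting that $\mathcal{E}(\varphi)-\mathcal{E}(\phi_\mu)=\tfrac12\langle\mathcal{L}_\mu r,r\rangle+\mathcal{O}(\|r\|_{H^1}^{q+1})$ and differentiating that representation; I expect this to be cleaner than term-by-term cancellation. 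The genuinely surviving contributions are: (a) the parameter-coupling terms $\langle \mathcal{L}_\mu r,\sum\alpha_j\mathcal{K}_j r\rangle$ and $\langle \mathcal{L}_\mu r,\mathcal{J}\sum\alpha_j\mathcal{K}_j\phi_\sigma\rangle$, which via the skew-orthogonality condition \eqref{5-19}/\eqref{eq-5-20} and the generalized-kernel relations \eqref{5-15}--\eqref{5-16} (so that $\mathcal{L}_\mu\mathcal{K}_j\phi_\mu$ lies in the kernel and pairs against $r\perp\mathcal{J}T_{\phi_\mu}M_s$) reduce to $\mathcal{O}(|\alpha|\,\|r\|_{H^1}^2)$; and (b) the potential terms $\langle \mathcal{L}_\mu r,\mathcal{R}_V r\rangle$ and $\langle \mathcal{L}_\mu r,\mathcal{R}_V\phi_\sigma\rangle$. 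For (b) I would use the key estimate from the proof of Lemma~\ref{le-52}, namely $\mathcal{R}_V=\mathcal{O}(\epsilon^4 x^2)$ on the support where it matters, but crucially paired with the exponentially localized $\phi_\mu$ and $r$ living in $H^1$; the first gives $\mathcal{O}(\epsilon^4\|r\|_{H^1}^2)$ and the second, after one integration by parts to move a derivative off $\mathcal{L}_\mu$ (explaining the $\epsilon^3$ rather than $\epsilon^4$ in the $\|r\|_{H^1}^2$ slot — a derivative hitting $\mathcal{R}_V$ costs only $\epsilon^3$), gives $\mathcal{O}(\epsilon^4\|r\|_{H^1}+\epsilon^3\|r\|_{H^1}^2)$.

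The main obstacle will be tracking the precise $\epsilon$-powers in the potential-dependent terms: one must distinguish $\mathcal{R}_V$ itself ($\sim\epsilon^4$ against a weight $x^2$ that is harmless against localized functions) from $\nabla\mathcal{R}_V$ ($\sim\epsilon^3$), since the $\epsilon^3\|r\|_{H^1}^2$ term in \eqref{5-33} can only appear through a derivative landing on $\mathcal{R}_V$ after integration by parts in the $\langle\mathcal{L}_\mu r,\mathcal{R}_V\phi_\sigma\rangle$-type or cross terms. A secondary subtlety is making sure the $\dot\mu$-dependence of $\mathcal{E}_\mu$ does not produce a term worse than $\mathcal{O}(|\alpha|\|r\|_{H^1}^2)$: using $\partial_\mu\mathcal{E}_\mu(\varphi)=\tfrac12\|\varphi\|_{L^2}^2$ and $\partial_\mu\mathcal{E}_\mu(\phi_\mu)=\tfrac12\|\phi_\mu\|_{L^2}^2$, the difference is $\tfrac12(\langle r,r\rangle+2\langle\phi_\mu,r\rangle)$, and the linear piece $\langle\phi_\mu,r\rangle$ must be absorbed using skew-orthogonality ($z_\gamma=-\mathcal{J}\phi_\mu\in T_{\phi_\mu}M_s$ forces $\langle r,\phi_\mu\rangle$ to be controlled, modulo the slight mismatch between $\mathcal{J}T_{\phi_\mu}M_s$ and $T_{\phi_\mu}M_s$, which contributes only $\mathcal{O}(|\alpha|\|r\|_{H^1})$ type errors via \eqref{5-28}). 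Everything else is routine Cauchy--Schwarz and Sobolev embedding in $\R^3$, invoking $q>\tfrac73$ so that the nonlinear remainders $N_\mu(r)=\mathcal{O}(\|r\|_{H^1}^{\min(2,q-1)}\cdot\text{localized})$ are subcritical, hence negligible at this order.
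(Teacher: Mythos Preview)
Your route---differentiating via the $r$-equation \eqref{5-24} and Taylor-expanding $\mathcal{E}'(\varphi)$---is not the paper's. The paper instead derives a closed identity for $\partial_t\mathcal{E}(\varphi)$ directly from the Hamiltonian/Ehrenfest structure of the NLS: using conservation of $\mathcal{W}$ together with $\partial_t\langle u,-i\nabla u\rangle=-\langle(\nabla V_\epsilon)u,u\rangle$ one obtains
\[
\partial_t\mathcal{E}(\varphi)=\tfrac12\,\dot\mu\,\|\varphi\|_{L^2}^2-\tfrac12\big\langle(\dot\upsilon+\nabla V_\epsilon(\cdot+a))\varphi,\nabla\varphi\big\rangle,
\]
and then expands $\varphi=\phi_\mu+r$. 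The gain is that the potential enters only through $\nabla V_\epsilon(\cdot+a)$, which is uniformly $\mathcal{O}(\epsilon^3)$, with increment $\nabla V_\epsilon(\cdot+a)-\nabla V_\epsilon(a)=\mathcal{O}(\epsilon^4|x|)$; the three scales in \eqref{5-33} then drop out of the $r$--$r$, $\phi_\mu$--$r$, and $\dot\mu\,\|r\|^2$ pieces with essentially no cancellations to hunt down. (Incidentally, skew-orthogonality with $z_\gamma=-\mathcal{J}\phi_\mu$ gives $\langle r,\phi_\mu\rangle=0$ exactly, so your worry about a ``slight mismatch'' there is unnecessary.)

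Your handling of the $\mathcal{R}_V$ terms has a real gap. You claim $\langle\mathcal{L}_\mu r,\mathcal{R}_V r\rangle=\mathcal{O}(\epsilon^4\|r\|_{H^1}^2)$ because ``$\mathcal{R}_V=\mathcal{O}(\epsilon^4 x^2)$ and $r\in H^1$'', but $H^1$ membership gives no weighted control on $r$, and $\mathcal{R}_V$ is not even bounded: it grows linearly at infinity because of the subtracted term $-\nabla V_\epsilon(a)\cdot x$. After integrating $\langle-\tfrac12\Delta r,\mathcal{R}_V r\rangle$ by parts you are left with a contribution of the form $\int(c\cdot x)\,|\nabla r|^2$ with $c=-\nabla V_\epsilon(a)$, which is not controlled by $\|r\|_{H^1}$. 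The only way to rescue this pairing along your route is through the symplectic structure (the $\mathcal{J}$ in $\dot r=\mathcal{J}(\cdots)$), so that the relevant object is $\omega(\mathcal{L}_\mu r,\mathcal{R}_V r)$ and the dangerous real integrands are killed by taking imaginary parts; the surviving commutator then sees only $\nabla V_\epsilon=\mathcal{O}(\epsilon^3)$, producing $\mathcal{O}(\epsilon^3\|r\|_{H^1}^2)$ rather than $\epsilon^4$. You do not invoke this mechanism, and without it the bound fails. The paper's conservation-law route sidesteps the issue entirely.
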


\begin{proof}
Let $u$ be the solution to equation (\ref{Sch eq-1}), we first notice that
\begin{eqnarray}\label{5-34}
\partial_{t}\int_{\mathbb{R}^{3}}V_{\epsilon}|u|^{2}dx=\langle(\nabla V_{\epsilon})iu,\nabla u\rangle\nonumber
\end{eqnarray}
and the Ehrenfest's theorem
\begin{eqnarray}\label{5-35}
\partial_{t}\langle u,-i\nabla u\rangle=-\langle(\nabla V_{\epsilon})u, u\rangle,\nonumber
\end{eqnarray}
which follows from the nonlinear Schrodinger equation (\ref{Sch eq-1}). Then by using the same trick as the one in the proof of \cite[Lemma 3]{FJGS}, we have
\begin{eqnarray}\label{5-36}
\partial_{t}\mathcal{E}(\varphi)=\frac{1}{2}\dot{\mu}\|\varphi\|^{2}_{L^{2}}-\frac{1}{2}\langle(\dot{\upsilon}+\nabla V_{\epsilon}(\cdot+a))\varphi, \nabla\varphi\rangle.
\end{eqnarray}
On the other hand, since $\phi_{\mu}$ is the critical point of functional $\mathcal{E}(\varphi)$ and thus
\begin{eqnarray}\label{5-37}
\partial_{t}\mathcal{E}(\phi_{\mu})=\frac{1}{2}\dot{\mu}\|\phi_{\mu}\|^{2}_{L^{2}}.
\end{eqnarray}
Applying (\ref{5-36}) and (\ref{5-37}), we obtain
\begin{eqnarray}\label{5-38}
\partial_{t}(\mathcal{E}(\varphi)-\mathcal{E}(\phi_{\mu}))&=&\frac{1}{2}\dot{\mu}\big(\|\varphi\|^{2}_{L^{2}}-\|\phi_{\mu}\|^{2}_{L^{2}}\big)-\frac{1}{2}\langle(\dot{\upsilon}+\nabla V_{\epsilon}(\cdot+a))\varphi, \nabla\varphi\rangle\nonumber\\
&:=&J_{1}-J_{2}.
\end{eqnarray}
It follows from the skew-orthogonal decomposition (\ref{5-30}) that
\begin{eqnarray}\label{5-39}
J_{1}=\frac{1}{2}\dot{\mu}\|r\|^{2}_{L^{2}}=\mathcal{O}(|\alpha|\|r\|^{2}_{L^{2}}).
\end{eqnarray}
As for $J_{2}$,
\begin{eqnarray}\label{5-40}
2J_{2}&=&\langle(\dot{\upsilon}+\nabla V_{\epsilon}(\cdot+a))r, \nabla r\rangle+\langle(\nabla V_{\epsilon}(\cdot+a))i\phi_{\mu}, \nabla r\rangle+\langle(\nabla V_{\epsilon}(\cdot+a))ir, \nabla \phi_{\mu}\rangle\nonumber\\
&=&\langle(\dot{\upsilon}+\nabla V_{\epsilon}(a))r, \nabla r\rangle+\langle(\nabla V_{\epsilon}(\cdot+a)-\nabla V_{\epsilon}(a))r, \nabla r\rangle\nonumber\\
&&\ \ \ \ \ \ +\ \langle(\nabla V_{\epsilon}(\cdot+a)-\nabla V_{\epsilon}(a))i\phi_{\mu}, \nabla r\rangle+\langle(\nabla V_{\epsilon}(\cdot+a)-\nabla V_{\epsilon}(a))ir, \nabla \phi_{\mu}\rangle,\nonumber
\end{eqnarray}
where in the first equality above we use $\langle i\nabla\phi_{\mu},r\rangle=\langle iz_{t},r\rangle=0$ and $\langle iqr,\nabla r\rangle=0$ for any real-valued function $q\in L^{\infty}$, and in the second equality we apply
\begin{eqnarray}\label{5-41}
\nabla V_{\epsilon}(a)\langle ir, i\nabla\phi_{\mu}\rangle=\nabla V_{\epsilon}(a)\langle i\phi_{\mu},\nabla r\rangle=0.\nonumber
\end{eqnarray}
Notice that
\begin{eqnarray}\label{5-42}
\nabla V_{\epsilon}=\mathcal{O}(\epsilon^{3}) \ \ {\rm and}\ \ \nabla V_{\epsilon}(x+a)-\nabla V_{\epsilon}(a)=\mathcal{O}(\epsilon^{4}|x|),\nonumber
\end{eqnarray}
it is easy to see
\begin{eqnarray}\label{5-43}
J_{2}=\mathcal{O}(|\alpha|\|r\|^{2}_{H^{1}}+\epsilon^{4}\|r\|_{H^{1}}+\epsilon^{3}\|r\|^{2}_{H^{1}}).
\end{eqnarray}
Combining (\ref{5-39}) and (\ref{5-43}), we finish the proof.
\end{proof}

Next we introduce the lower bound for $\mathcal{E}(\varphi)-\mathcal{E}(\phi_{\mu})$ due to \cite{FJGS}.
\begin{lem}\label{le-54}
Let $u\in U_{\delta}$ be the solution to equation (\ref{Sch eq-1}) and $\varphi$, $r$ and $\phi_{\mu}$ be defined as in (\ref{5-30}). Then there exist positive constants $\rho$ and $c$ independent of $\epsilon$ such that for $\|r\|_{H^{1}}\leq 1$,\
\begin{eqnarray}\label{5-44}
\big|\mathcal{E}(\phi_{\mu}+r)-\mathcal{E}(\phi_{\mu})\big|\geq \rho\|r\|^{2}_{H^{1}}-c\|r\|^{3}_{H^{1}}.
\end{eqnarray}
\end{lem}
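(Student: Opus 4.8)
The plan is to establish the coercivity estimate \eqref{5-44} by a second-order Taylor expansion of $\mathcal{E}$ around the critical point $\phi_\mu$ together with the spectral properties of the Hessian $\mathcal{L}_\mu$ restricted to the skew-orthogonality constraint. First I would write
\[
\mathcal{E}(\phi_\mu + r) - \mathcal{E}(\phi_\mu) = \tfrac12\langle \mathcal{L}_\mu r, r\rangle + \mathcal{R}(r),
\]
where the linear term vanishes because $\mathcal{E}'(\phi_\mu)=0$ and the remainder $\mathcal{R}(r)$ collects the cubic-and-higher contributions of the nonlinearity $\mathcal{F}$; since $F$ behaves like $|s|^p$ with $p$ in the relevant range and $\phi_\mu$ is smooth and exponentially decaying, Sobolev embedding $H^1(\mathbb{R}^3)\hookrightarrow L^r$ for $2\le r\le 6$ gives $|\mathcal{R}(r)|\lesssim \|r\|_{H^1}^3$ whenever $\|r\|_{H^1}\le 1$. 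So everything reduces to the quadratic form bound $\langle \mathcal{L}_\mu r, r\rangle \gtrsim \|r\|_{H^1}^2$.

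The operator $\mathcal{L}_\mu$ is not positive definite on all of $H^1$: it is block-diagonal as $\mathrm{diag}(L_+,L_-)$ in the real representation, $L_-$ is nonnegative with kernel spanned by $\phi_\mu$ (this is $z_\gamma$), and $L_+$ has exactly one negative eigenvalue with its kernel spanned by $\nabla\phi_\mu$ (this is $z_a$). The point is that the skew-orthogonality condition \eqref{5-19} defining $r$ forces $r$ to be orthogonal (in the appropriate pairing) to the generalized kernel directions $z_a, z_\upsilon, z_\gamma, z_\mu$, and in particular it kills the component along the negative direction of $L_+$ and the null directions. The key step is therefore to invoke the standard coercivity lemma for the linearized NLS energy: on the subspace $\{r : \omega(r,z)=0 \ \forall z\in T_{\phi_\mu}M_s\}$ one has $\langle \mathcal{L}_\mu r, r\rangle \ge \rho_0 \|r\|_{L^2}^2$ for some $\rho_0>0$, uniform for $\mu$ in the compact interval $I$ (here the convexity condition $\langle\partial_\mu\phi_\mu,\phi_\mu\rangle>0$ recorded in Section \ref{sec2-1} is exactly what guarantees the constraint controls the negative direction). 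Upgrading the $L^2$-lower bound to an $H^1$-lower bound is then routine: write $\langle\mathcal{L}_\mu r,r\rangle \ge \tfrac14\|\nabla r\|_{L^2}^2 - C\|r\|_{L^2}^2$ from the explicit form of $\mathcal{L}_\mu$, and interpolate this with the $L^2$ coercivity (a convex combination of the two estimates absorbs the $-C\|r\|_{L^2}^2$ term). This yields $\langle\mathcal{L}_\mu r,r\rangle \ge \rho\|r\|_{H^1}^2$ with $\rho$ independent of $\epsilon$ — and it is $\epsilon$-independent precisely because $\mathcal{L}_\mu$ and the constraint space involve only $\phi_\mu$ and not the potential $V_\epsilon$.

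Combining the quadratic lower bound with the cubic remainder estimate gives
\[
\big|\mathcal{E}(\phi_\mu+r)-\mathcal{E}(\phi_\mu)\big| \ge \rho\|r\|_{H^1}^2 - c\|r\|_{H^1}^3,
\]
which is the claim. I expect the main obstacle to be the coercivity of $\mathcal{L}_\mu$ on the constrained subspace: one must check carefully that the skew-orthogonality relations \eqref{5-19}, which are stated with respect to the symplectic form $\omega$ and the tangent vectors $z_a,z_\upsilon,z_\gamma,z_\mu$, genuinely control the unstable and neutral modes of $L_+$ and $L_-$, and that the spectral gap can be taken uniform in $\mu\in I$. This is where one leans on the spectral assumptions and the convexity inequality from Section \ref{sec2-1}, and since the detailed verification is carried out in \cite{FJGS}, I would cite it rather than reproduce it, noting only that none of the constants depend on $\epsilon$ since the potential enters neither $\mathcal{E}$'s Hessian at $\phi_\mu$ nor the manifold $M_s$.
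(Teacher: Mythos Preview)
Your proposal is correct and follows exactly the approach the paper has in mind: the paper does not give its own proof of this lemma but simply attributes it to \cite{FJGS}, and your sketch (Taylor expansion at the critical point $\phi_\mu$, coercivity of $\mathcal{L}_\mu$ on the skew-orthogonal subspace using the convexity condition, cubic control of the remainder via Sobolev, and the observation that nothing depends on $\epsilon$) is precisely the argument of that reference. There is nothing to add.
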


Now we present our main result.
\begin{pro}\label{pro-55}
Let $u\in U_{\delta}$ be the solution to equation (\ref{Sch eq-1}) and $\varphi$, $r$ and $\phi_{\mu}$ be defined as in (\ref{5-30}). Assume that $\epsilon$ is sufficiently small. Then there exist positive constants $c,C$ independent of $\epsilon$ such that for $2\leq\delta\leq3$ and $t\leq C\min\big\{\epsilon^{-\delta},\ \epsilon^{-8+2\delta}\big\}$,
\begin{eqnarray}\label{5-45}
&&\|r\|_{H^{1}}\leq c\epsilon^{4-\delta}\nonumber\\
&&\|\alpha\|_{L^{\infty}}\leq c\epsilon^{8-2\delta}.
\end{eqnarray}
\end{pro}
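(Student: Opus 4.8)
The proof of Proposition~\ref{pro-55} proceeds by a continuity (bootstrap) argument, combining the approximate conservation law of Lemma~\ref{le-53}, the coercivity estimate of Lemma~\ref{le-54}, and the modulation bound \eqref{5-28} of Lemma~\ref{le-52}. The plan is as follows. First I would set up the bootstrap: assume that on a time interval $[0,T^{*}]$ one has $\|r(t)\|_{H^{1}}\leq K\epsilon^{4-\delta}$ for a large constant $K$ to be fixed, and show that this implies the strictly stronger bound $\|r(t)\|_{H^{1}}\leq \tfrac{K}{2}\epsilon^{4-\delta}$ on the same interval, provided $t\leq C\min\{\epsilon^{-\delta},\epsilon^{-8+2\delta}\}$. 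Since $r(0)=0$ by \eqref{eq-5-21} and the initial data being exactly the ground state, this closes the argument and extends it to the full interval. Along the way one also records, from \eqref{5-28}, that under the bootstrap hypothesis $|\alpha|=\mathcal{O}(|\alpha|\,\epsilon^{4-\delta}+\epsilon^{4}+\epsilon^{8-2\delta})$, so that for $\epsilon$ small the first term is absorbed and $\|\alpha\|_{L^\infty}\lesssim \epsilon^{4}+\epsilon^{8-2\delta}\lesssim \epsilon^{8-2\delta}$ on the range $2\leq\delta\leq3$ (where $8-2\delta\leq 4$), which is the second claimed estimate.

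The core of the argument is the Lyapunov functional $\mathcal{D}(t):=\mathcal{E}(\varphi(t))-\mathcal{E}(\phi_{\mu(t)})$. From Lemma~\ref{le-53} and the bootstrap hypothesis, together with $|\alpha|\lesssim\epsilon^{8-2\delta}$,
\begin{eqnarray*}
|\partial_{t}\mathcal{D}(t)|\lesssim |\alpha|\,\|r\|_{H^{1}}^{2}+\epsilon^{4}\|r\|_{H^{1}}+\epsilon^{3}\|r\|_{H^{1}}^{2}\lesssim \epsilon^{4}\cdot K\epsilon^{4-\delta}+\epsilon^{3}K^{2}\epsilon^{8-2\delta},
\end{eqnarray*}
so integrating over $[0,t]$ with $t\leq C\epsilon^{-\delta}$ (and using $t\leq C\epsilon^{-8+2\delta}$ to handle the last, weaker term) gives $|\mathcal{D}(t)|\lesssim \epsilon^{4}\cdot\epsilon^{4-\delta}\cdot\epsilon^{-\delta}=\epsilon^{8-2\delta}$ plus a term of the same or smaller order, while $\mathcal{D}(0)=0$. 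On the other side, Lemma~\ref{le-54} gives $|\mathcal{D}(t)|\geq \rho\|r(t)\|_{H^{1}}^{2}-c\|r(t)\|_{H^{1}}^{3}\geq \tfrac{\rho}{2}\|r(t)\|_{H^{1}}^{2}$ once $\|r\|_{H^{1}}$ is small enough (guaranteed by the bootstrap, since $4-\delta\geq 1>0$). Combining, $\|r(t)\|_{H^{1}}^{2}\lesssim \epsilon^{8-2\delta}$, i.e. $\|r(t)\|_{H^{1}}\lesssim \epsilon^{4-\delta}$ with an implied constant \emph{independent of} $K$; choosing $K$ larger than twice that constant closes the bootstrap.

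I expect the main obstacle to be bookkeeping the $\epsilon$-powers so that the two competing constraints $t\leq C\epsilon^{-\delta}$ and $t\leq C\epsilon^{-8+2\delta}$ are exactly what is needed, and verifying that the cubic error in Lemma~\ref{le-54} and the quadratic-in-$r$ errors in Lemma~\ref{le-53} are genuinely lower order than the $\epsilon^{8-2\delta}$ main term across the whole range $\delta\in[2,3]$ — at the endpoints the margins are thin. A secondary point requiring care is that $u(t)$ must remain in the neighborhood $U_{\delta'}$ (for the $\delta'>0$ of the skew-orthogonal decomposition, Lemma~\ref{le-51}) for the whole time interval, so that $\varphi=\phi_{\mu}+r$ and the modulation equations \eqref{5-25} remain valid; this itself follows from the bootstrap bound on $\|r\|_{H^{1}}$ since $\|u-\phi_{\sigma}\|_{H^{1}}=\|\mathcal{S}_{a\upsilon\gamma}r\|_{H^{1}}\approx\|r\|_{H^{1}}$, but it must be included in the continuity argument. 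Finally, one should note that global existence of the solution on the relevant time scale is not an issue here because the $H^{1}$ a priori bound on $u=w+R$ provided by the bootstrap, together with standard local well-posedness for \eqref{Sch eq-1}, prevents blow-up.
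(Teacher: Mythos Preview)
Your strategy is the right one and is essentially what the paper does: combine the coercivity of Lemma~\ref{le-54}, the approximate conservation of Lemma~\ref{le-53}, and the modulation bound \eqref{5-28} in a continuity argument. There is, however, a genuine gap in the way you close the bootstrap.

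The assertion that, after integrating $|\partial_t\mathcal{D}|$ and applying coercivity, one obtains $\|r\|_{H^1}\lesssim \epsilon^{4-\delta}$ with implied constant \emph{independent of $K$} is not correct. Under the hypothesis $\|r\|\le K\epsilon^{4-\delta}$, \eqref{5-28} gives only $|\alpha|\lesssim K^{2}\epsilon^{8-2\delta}$ (for $\delta\ge 2$), so the term $|\alpha|\,\|r\|^{2}$ in Lemma~\ref{le-53} is of size $\lesssim K^{4}\epsilon^{16-4\delta}$; it is \emph{not} dominated by $\epsilon^{3}\|r\|^{2}$ once $\delta>5/2$. Integrating this over $t\le C\epsilon^{-8+2\delta}$ yields a contribution $CK^{4}\epsilon^{8-2\delta}$ to $|\mathcal{D}(t)|$, hence $\|r\|\lesssim K^{2}\epsilon^{4-\delta}$, which does not improve to $K/2$. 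Thus for $\delta$ close to $3$ the bootstrap as you wrote it does not close.

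The paper avoids this by organizing the inequality differently. After integrating Lemma~\ref{le-53} and invoking Lemma~\ref{le-54} one has, with $X=\sup_{[0,t]}\|r\|_{H^1}$,
\[
\rho X^{2}\ \le\ c_{1}t\big(\epsilon^{4}X+(\epsilon^{3}+|\alpha|)X^{2}\big)+c_{2}X^{3}.
\]
Rather than bounding $|\alpha|$ first, one \emph{absorbs} the whole quadratic term $c_{1}t(\epsilon^{3}+|\alpha|)X^{2}$ into the left side by imposing $t\le \tfrac{\rho}{2c_{1}}(\epsilon^{\delta}+|\alpha|)^{-1}$ (note $\epsilon^{3}\le\epsilon^{\delta}$ for $\delta\le3$). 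This leaves $\tfrac{\rho}{2}X^{2}\le c_{1}t\,\epsilon^{4}X+c_{2}X^{3}$, and using $t\le C\epsilon^{-\delta}$ together with a standard cubic--inequality continuity argument gives $X\le c\,\epsilon^{4-\delta}$ with $c$ independent of everything. Feeding this back into \eqref{5-28} yields $|\alpha|\le c'\epsilon^{8-2\delta}$, and \emph{then} the time constraint $t\le \tfrac{\rho}{2c_{1}}(\epsilon^{\delta}+|\alpha|)^{-1}$ becomes $t\lesssim \min\{\epsilon^{-\delta},\epsilon^{-8+2\delta}\}$. This is precisely where the second time scale $\epsilon^{-8+2\delta}$ enters, and it explains why both constraints are needed. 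Your remaining remarks (that $r(0)=0$, that $u$ stays in $U_{\delta'}$, and that no blow-up occurs) are all to the point.
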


\begin{proof}
Notice that for $\|r\|_{H^{1}}\leq 1$, it follows from Lemmas \ref{le-53} and \ref{le-54}
\begin{eqnarray}\label{5-46}
\rho\|r(t)\|^{2}_{H^{1}}\leq c_{1}t\big(\epsilon^{4}\|r(t)\|_{H^{1}}+(\epsilon^{3}+|\alpha|)\|r(t)\|^{2}_{H^{1}}\big)+c_{2}\|r(t)\|^{3}_{H^{1}},
\end{eqnarray}
where $\rho$ defined as in Lemma \ref{le-54} and $c_{1},\ c_{2}$ are positive constants. We rewrite (\ref{5-46})
\begin{eqnarray}\label{5-47}
C\|r(t)\|^{2}_{H^{1}}\leq t\big(\epsilon^{4}\|r(t)\|_{H^{1}}+(\epsilon^{3}+|\alpha|)\|r(t)\|^{2}_{H^{1}}\big)+\|r(t)\|^{3}_{H^{1}},
\end{eqnarray}
for some constant $C>0$.
For $t\leq \frac{C}{2}(\epsilon^{\delta}+|\alpha|)^{-1}:=\tau$ with  $2\leq\delta\leq3$, it follows from (\ref{5-47}) that
\begin{eqnarray}\label{5-48}
\frac{C}{4}t^{8-2\delta}-\frac{C}{4}\|r(t)\|^{2}_{H^{1}}+\|r(t)\|^{3}_{H^{1}}\geq0\nonumber.
\end{eqnarray}
Take $X=\sup_{t\in [0,\tau]}\|r(t)\|^{2}_{H^{1}}$ and then
we have $\frac{C}{4}t^{8-2\delta}-\frac{C}{4}X^{2}+X^{3}\geq0$, which leads to
\begin{eqnarray}\label{5-49}
X\leq c\epsilon^{4-\delta},
\end{eqnarray}
with some constant $c>0$ for sufficiently small $\epsilon$. Now putting (\ref{5-49}) back in (\ref{5-28}), we have
\begin{eqnarray}\label{5-49}
\sup_{t\in [0,\tau]}|\alpha(t)|\leq c\epsilon^{8-2\delta},\nonumber
\end{eqnarray}
which finishes the proof.
\end{proof}

{\bf \emph{The proof of Theorem \ref{thm-1}}}:  Choose $\epsilon$ such that $c\epsilon^{4-\delta}<\frac{1}{2}h$ where the constant $c$ is the one in (\ref{5-45}) and $h$ is given in Lemma \ref{le-51}. Then there exists a maximal $T_{0}>0$ such that the solution $u$ of equation (\ref{Sch eq-1}) belongs to $U_{h}$ for all $t\leq T_{0}$. It follows from Lemma \ref{le-52} and Proposition \ref{pro-55} that Theorem \ref{thm-1} is true for $t\leq\min\{T_{0}, c\epsilon^{8-2\delta}\}$, which combined the inequality $c\epsilon^{4-\delta}<\frac{1}{2}h$ imply Theorem \ref{thm-1} holds for $t\leq c\epsilon^{8-2\delta}$.

\section{The proof of Theorem \ref{thm-1-1}}
\setcounter{equation}{0}

This section is devoted to the proof of Theorem \ref{thm-1-1}, the asymptotic behavior of the solution for post-interaction region. First of all, we have obtained the following matrix Schr\"{o}dinger equation (see also equation (\ref{eq-25}))
\begin{eqnarray}\label{eq-z-1}
&&i\partial_{t}Z=\mathscr{H}(t,\sigma(t))Z+\mathscr{N}(\sigma(t))+\mathscr{V}_{\epsilon}(\sigma(t))+\mathcal {O}(|Z|^{q})+\mathcal {O}(|Z|^{2}|w(\sigma(t))|^{q-2}),\nonumber\\
&&Z(0)=(R(0),\overline{R}(0))^{T},
\end{eqnarray}
with $\frac{7}{3}<q\leq 5$, $\mathscr{N}(\sigma(t))$ and $\mathscr{V}_{\epsilon}(\sigma(t))$ defined by (\ref{eq-27}) and (\ref{eq-29}) respectively, and
\begin{eqnarray}
\big\|Z(0)\big\|_{H^{1}}=\mathcal {O}(\epsilon^{4-\delta_{0}}),\nonumber
\end{eqnarray}
as well as the ODE system for $\sigma$ (see Proposition \ref{pro-22}).

{\bf Bootstrap assumptions:} There exist a sufficiently large constant $C_{0}$ such that for soma $0<\alpha<4-\delta_{0}$
\begin{eqnarray}
\label{eq-222}&&\big\|Z\big\|_{L^{2}_{t}W^{1,6}_{x}\cap L^{\infty}_{t}H^{1}_{x}}\leq C_{0}^{-1}\epsilon^{\alpha}.\\
\label{eq-221}&&\|\dot{\sigma}\|_{L^{1}_{t}}\leq \epsilon^{2\alpha},
\end{eqnarray}
where $\delta_{0}\in (2,\frac{5}{2})$ defined in Section 2.3.

\subsection{End-point Strichartz estimates}\label{sec-4-1}
In this subsection, we always assume that the bootstrap assumption (\ref{eq-221}) holds for $\epsilon\ll1$ which will be verified in the next subsection. let us introduce a new charge transfer Hamiltonian,
\begin{eqnarray}\label{eq-228}
\mathscr{H}(t,\widetilde{\sigma}(t))=\mathscr{H}_{0}+\mathcal {V}_{1\epsilon}+\mathcal {V}_{2}(t,\widetilde{\sigma}(t))
\end{eqnarray}
where  $\mathscr{H}_{0}$ and $\mathcal {V}_{1\epsilon}$ are defined as in (\ref{eq-26}) and
\begin{eqnarray}\label{eq-225}
&&\mathcal {V}_{2}(t,\widetilde{\sigma}(t))\nonumber\\
&=&\left(
  \begin{array}{cc}
   -F(|w(\widetilde{\sigma}(t))|^{2})-F'(|w(\widetilde{\sigma}(t))|^{2})|w(\widetilde{\sigma}(t))|^{2}& -F'(|w(\widetilde{\sigma}(t))|^{2})w(\widetilde{\sigma}(t))^{2} \\
    F'(|w(\widetilde{\sigma}(t))|^{2})\overline{w(\widetilde{\sigma}(t))}^{2} &  F(|w(\widetilde{\sigma}(t))|^{2})+F'(|w(\widetilde{\sigma}(t))|^{2})|w(\widetilde{\sigma}(t))|^{2} \\
  \end{array}
  \right),\nonumber
\end{eqnarray}
with
\begin{eqnarray}\label{eq-227}
w(\widetilde{\sigma}(t))=e^{i\widetilde{\theta}(t,x,\widetilde{\sigma}(t))}\phi(x-\widetilde{y}(t,\widetilde{\sigma}(t))),
\end{eqnarray}
\begin{eqnarray}\label{ex-226}
&&\widetilde{\theta}(x,t;\widetilde{\sigma}(t))=\upsilon_{T_{0}}\cdot x+\gamma_{T_{0}}+\Big(\mu_{T_{0}}-\frac{|\upsilon_{T_{0}}|^{2}}{2}\Big)t-\int_{0}^{t}\Big(\frac{|\upsilon(s)-\upsilon_{T_{0}}|^{2}}{2}-(\mu(s)-\mu_{T_{0}})\Big)ds,\nonumber\\
&&\widetilde{y}(t;\widetilde{\sigma}(t))=\int_{0}^{t}\big(\upsilon(s)-\upsilon_{T_{0}}\big)ds+a_{T_{0}}+\upsilon_{T_{0}}t.
\end{eqnarray}
Let $T_{0}(t)$ and $T(t)$ be operators defined by the following formulas:
\begin{eqnarray}\label{eq-42-1}
T_{0}(t)=B_{\beta_{0}(t),y_{0}(t),\upsilon_{0}},\ T(t)=B_{\beta(t),y(t),0},
\ (B_{\beta,y,\upsilon}f)(x)=e^{i\beta\varrho+i\upsilon\cdot x\varrho}f(x-y)
\end{eqnarray}
where
\begin{eqnarray}\label{eq-42}
&&\varrho=\left(
  \begin{array}{cc}
    1 & 0 \\
    0 & -1 \\
  \end{array}
  \right),\nonumber\\
 && \beta_{0}(t)=\gamma_{T_{0}}+\Big(\mu_{T_{0}}-\frac{|\upsilon_{T_{0}}|^{2}}{2}\Big)t,\ \ y_{0}(t)=a_{T_{0}}+\upsilon_{T_{0}}t,\nonumber\\
 &&\beta(t)=\int_{0}^{t}\big(-\frac{|\upsilon(s)-\upsilon_{T_{0}}|^{2}}{2}+\mu(s)-\mu_{T_{0}}\big)ds,\ \ y(t)=\int_{0}^{t}\big(\upsilon(s)-\upsilon_{T_{0}}\big)ds.
\end{eqnarray}
Thus defined, we have
 $$B_{\beta,y,\upsilon}f=B_{\beta,0,\upsilon}B_{0,y,0}f$$
 and
 $$B^{\ast}_{\beta,y,\upsilon}f=B_{-\beta,-y,0}B_{0,0,-\upsilon}f.$$
Denote
\begin{eqnarray}\label{eq-45}
P_{b}(t)=T_{0}(t)T(t)P_{b}(\sigma_{T_{0}})T^{\ast}(t)T_{0}^{\ast}(t)
\end{eqnarray}
and
\begin{eqnarray}\label{eq-45-1}
P_{c}(t)=T_{0}(t)T(t)P_{c}(\sigma_{T_{0}})T^{\ast}(t)T_{0}^{\ast}(t)
\end{eqnarray}
where  $P_{c}(\sigma_{T_{0}})$
defined as in Proposition \ref{pro-21} is the projection onto the subspace of the continuous spectrum of $\mathscr{H}_{2}(\sigma_{T_{0}})$ (see (\ref{eq-214}) for definition)
and $P_{b}(\sigma_{T_{0}})=I-P_{c}(\sigma_{T_{0}})$  defined by (\ref{eq-216}). Moreover, let
\begin{eqnarray}
\widetilde{\xi}_{j}(x,t;\widetilde{\sigma}(t))=T_{0}(t)T(t)\xi_{j}(x,\sigma_{T_{0}})\nonumber
\end{eqnarray}
and
\begin{eqnarray}
\widetilde{\eta}_{j}(x,t;\widetilde{\sigma}(t))=T_{0}(t)T(t)\eta_{j}(x,\sigma_{T_{0}}),\nonumber
\end{eqnarray}
where $\xi_{j}(x,\sigma_{T_{0}})$ and $\eta_{j}(x,\sigma_{T_{0}})$ are defined in section \ref{sec2-1}. It follows from (\ref{eq-45}) and (\ref{eq-216}) that
\begin{eqnarray}\label{eq-216-1}
P_{b}(t)f&=&\frac{1}{n_{1}}\big(\widetilde{\eta}_{1}(x,t;\widetilde{\sigma}(t))\langle f,\widetilde{\xi}_{2}(x,t;\widetilde{\sigma}(t))\rangle+\widetilde{\eta}_{2}(x,t;\widetilde{\sigma}(t))\langle f,\widetilde{\xi}_{1}(x,t;\widetilde{\sigma}(t))\rangle\big)\nonumber\\
&&+\ \ \ \sum_{\ell=3}^{5}\frac{1}{n_{\ell}}\big(\widetilde{\eta}_{\ell}(x,t;\widetilde{\sigma}(t))\langle f,\widetilde{\xi}_{\ell+3}(x,t;\widetilde{\sigma}(t))\rangle+\widetilde{\eta}_{\ell+3}(x,t;\widetilde{\sigma}(t))\langle f,\widetilde{\xi}_{\ell}(x,t;\widetilde{\sigma}(t))\rangle\big).
\end{eqnarray}

Now we introduce the end-point Strichartz estimates for the new Hamiltonian (\ref{eq-228}).  The admissible pair $(p,q)$ for Strichartz estimates satisfies
\begin{eqnarray}\label{adm}
\frac{2}{p}=\frac{3}{2}-\frac{3}{q},\ \ \ \ \ 2\leq p\leq\infty.
\end{eqnarray}
In particular, the endpoint admissible pair $(p,q)=(2,6)$ is crucial in our paper.

Since we study the soliton-potential problem in $H^{1}$, it is necessary to consider the  end-point Strichartz estimates in the following form which will be obtained by applying Theorem \ref{thm-2}.
\begin{pro}\label{pro-31}
Let $Z(t,x)$ be the solution of equation (\ref{eq-301}) and satisfy (\ref{eq-302}). Then for all admissible pairs $(p,q)$ and $(\tilde{p},\tilde{q})$
\begin{eqnarray}\label{eq-304}
\big\|Z\big\|_{L^{p}_{t}W^{1,q}_{x}}\lesssim \|Z_{0}\|_{H^{1}}+\|F\|_{L^{\tilde{p}'}_{t}W^{1,\tilde{q}'}_{x}}+B,
\end{eqnarray}
where $B$ is the constant in (\ref{eq-302}).
\end{pro}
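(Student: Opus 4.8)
The plan is to deduce Proposition~\ref{pro-31} from Theorem~\ref{thm-2} by applying the latter both to $Z$ and to $\nabla Z$. Since $(p,q)$ and the endpoint pair $(2,6)$ are admissible, Theorem~\ref{thm-2} applied to $Z$ itself already gives $\|Z\|_{L^p_tL^q_x}+\|Z\|_{L^2_tL^6_x}\lesssim\|Z_0\|_{L^2}+\|F\|_{L^{\tilde{p}'}_tL^{\tilde{q}'}_x}+B$, so the statement reduces to bounding $\|\nabla Z\|_{L^p_tL^q_x}$ by the right-hand side of~(\ref{eq-304}). Because $\mathscr{H}_0$ commutes with $\nabla$, differentiating~(\ref{eq-301}) shows that $\nabla Z$ again solves a charge transfer equation with \emph{the same} Hamiltonian $\mathscr{H}(t,\widetilde{\sigma}(t))$:
\[
i\partial_t(\nabla Z)=\mathscr{H}(t,\widetilde{\sigma}(t))\,\nabla Z+\nabla F+G,\qquad (\nabla Z)(\cdot,0)=\nabla Z_0 ,
\]
where $G:=[\nabla,\mathcal{V}_{1\epsilon}]Z+[\nabla,\mathcal{V}_2(t,\widetilde{\sigma}(t))]Z=(\nabla V_\epsilon)\varrho\,Z+(\nabla_x\mathcal{V}_2(t,\widetilde{\sigma}(t)))\,Z$. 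Thus it suffices to (a) estimate the commutator source $G$ in a dual Strichartz norm, (b) verify hypothesis~(\ref{eq-302}) of Theorem~\ref{thm-2} for $\nabla Z$, and then (c) apply Theorem~\ref{thm-2} to $\nabla Z$ and add the bound for $Z$.

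For step~(a) I would place $G$ in the endpoint dual space $L^2_tL^{6/5}_x$, where the scaling of $V_\epsilon$ is favourable. By scaling $\|\nabla V_\epsilon\|_{L^{3/2}_x}=\epsilon\,\|\nabla V\|_{L^{3/2}_x}$, so H\"older in space ($\|fg\|_{L^{6/5}_x}\le\|f\|_{L^{3/2}_x}\|g\|_{L^6_x}$) gives $\|(\nabla V_\epsilon)\varrho\,Z\|_{L^2_tL^{6/5}_x}\lesssim\epsilon\,\|Z\|_{L^2_tL^6_x}$. For the second piece, $\mathcal{V}_2(t,\widetilde{\sigma}(t))$ is assembled from $F,F'$ evaluated at $|\phi(\cdot-\widetilde{y}(t),\mu(t))|^2$ and from $e^{\pm2i\widetilde{\theta}}\phi(\cdot-\widetilde{y}(t),\mu(t))^2$; since $\nabla_x\widetilde{\theta}=\upsilon_{T_0}$ with $|\upsilon_{T_0}|\lesssim1$ by~(\ref{eq-21}), and $\phi(\cdot,\mu)$ and $\nabla\phi(\cdot,\mu)$ decay exponentially, uniformly for $\mu$ near $\mu_0$ and independently of $\epsilon$, one gets $\|\nabla_x\mathcal{V}_2(t,\widetilde{\sigma}(t))\|_{L^{3/2}_x}\lesssim1$ uniformly in $t$ and $\epsilon$, hence $\|(\nabla_x\mathcal{V}_2)\,Z\|_{L^2_tL^{6/5}_x}\lesssim\|Z\|_{L^2_tL^6_x}$. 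Altogether $\|G\|_{L^2_tL^{6/5}_x}\lesssim\|Z\|_{L^2_tL^6_x}\lesssim\|Z_0\|_{L^2}+\|F\|_{L^{\tilde{p}'}_tL^{\tilde{q}'}_x}+B$, with an $\epsilon$-independent constant.

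For step~(b), $P_b(t)$ is the rank-$8$ projection written out in~(\ref{eq-216-1}): its range is spanned by the $\widetilde{\eta}_j(\cdot,t;\widetilde{\sigma}(t))$, which have $\|\widetilde{\eta}_j(\cdot,t;\widetilde{\sigma}(t))\|_{L^6_x}\lesssim1$ uniformly in $t$ and $\epsilon$ (they are phase-translates of fixed exponentially decaying profiles), and its coefficients are the pairings $\langle\,\cdot\,,\widetilde{\xi}_j(\cdot,t;\widetilde{\sigma}(t))\rangle$. Since $P_b(t)$ does \emph{not} commute with $\nabla$, the hypothesis must be re-derived for $\nabla Z$ by integration by parts,
\[
\langle\nabla Z(t),\widetilde{\xi}_j(\cdot,t;\widetilde{\sigma}(t))\rangle=-\langle Z(t),\nabla_x\widetilde{\xi}_j(\cdot,t;\widetilde{\sigma}(t))\rangle ,
\]
where $\nabla_x\widetilde{\xi}_j$ is again exponentially localized about $\widetilde{y}(t)$ with $\|\nabla_x\widetilde{\xi}_j(\cdot,t;\widetilde{\sigma}(t))\|_{L^{6/5}_x}\lesssim1$ uniformly; this yields $|\langle\nabla Z(t),\widetilde{\xi}_j(\cdot,t;\widetilde{\sigma}(t))\rangle|\lesssim\|Z(t)\|_{L^6_x}$, hence $\|P_b(t)(\nabla Z)(t)\|_{L^6_x}\lesssim\|Z(t)\|_{L^6_x}$ and
\[
\|P_b(t)\nabla Z\|_{L^2_tL^6_x}\lesssim\|Z\|_{L^2_tL^6_x}\lesssim\|Z_0\|_{L^2}+\|F\|_{L^{\tilde{p}'}_tL^{\tilde{q}'}_x}+B=:B' ,
\]
which is precisely the required bound, expressed through $\|Z\|_{L^2_tL^6_x}$ (already controlled), so there is no circularity. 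Then step~(c): the bootstrap assumption~(\ref{eq-221}) concerns only $\sigma$ and is unaffected, so Theorem~\ref{thm-2} applies to $\nabla Z$ — in the form (provided by its proof, or by the Christ--Kiselev decomposition of the Duhamel integral) that lets the source be split between dual admissible spaces, putting $\nabla F$ in $L^{\tilde{p}'}_tL^{\tilde{q}'}_x$ and $G$ in $L^2_tL^{6/5}_x$. With $\|\nabla Z_0\|_{L^2}\le\|Z_0\|_{H^1}$ and $\|\nabla F\|_{L^{\tilde{p}'}_tL^{\tilde{q}'}_x}\le\|F\|_{L^{\tilde{p}'}_tW^{1,\tilde{q}'}_x}$, this gives $\|\nabla Z\|_{L^p_tL^q_x}\lesssim\|Z_0\|_{H^1}+\|F\|_{L^{\tilde{p}'}_tW^{1,\tilde{q}'}_x}+B$; adding the estimate for $Z$ yields~(\ref{eq-304}), with all constants $\epsilon$-independent since those in Theorem~\ref{thm-2} are.

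The main obstacle, beyond this bookkeeping, will be step~(b): because $[\nabla,P_b(t)]\neq0$, hypothesis~(\ref{eq-302}) is not inherited for free and has to be re-established for $\nabla Z$ through the explicit formula~(\ref{eq-216-1}), moving the derivative onto the exponentially decaying adjoint root vectors $\widetilde{\xi}_j$; one must also check carefully that the commutator potential $\nabla_x\mathcal{V}_2$ together with the scaled gradient $\nabla V_\epsilon$ constitutes a localized, uniformly (in $\epsilon$) bounded perturbation — placed in the endpoint dual Strichartz space, where the unfavourable large-exponent scaling of $V_\epsilon$ is avoided — so that Theorem~\ref{thm-2} genuinely applies to the differentiated equation with an $\epsilon$-independent constant.
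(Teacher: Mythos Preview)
Your proposal is correct and follows essentially the same approach as the paper: differentiate the equation to obtain a charge transfer equation for $\partial_k Z$ with commutator source $(\partial_k\mathcal{V}_{1\epsilon}+\partial_k\mathcal{V}_2)Z$, verify hypothesis~(\ref{eq-302}) for $\partial_k Z$ by integrating by parts in the explicit formula~(\ref{eq-216-1}) to move the derivative onto the localized $\widetilde{\xi}_j$, and then reapply Theorem~\ref{thm-2}. Your treatment is in fact slightly more explicit than the paper's about the $\epsilon$-scaling of $\nabla V_\epsilon$ and about splitting the forcing between two dual admissible norms.
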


\begin{proof}
Our main idea is apply the Strichartz estimates Theorem \ref{thm-2} for $\partial_{k}Z$ ($k=1,2,3$).
To this end, differentiating the equation (\ref{eq-301}) we obtain the following equation for $\partial_{k}Z$,
\begin{eqnarray}\label{eq-307}
i\partial_{t}\partial_{k}Z=\mathscr{H}(t,\widetilde{\sigma}(t))\partial_{k}Z+\big(\partial_{k}\mathcal {V}_{1\epsilon}+\partial_{k}\mathcal {V}_{2}(t,\widetilde{\sigma}(t))\big)Z+\partial_{k}F.
\end{eqnarray}
On the other hand, it follows from  (\ref{eq-216-1}) that
\begin{eqnarray}\label{eq-305}
P_{b}(t)\partial_{k}Z(t)&=&-\frac{1}{n_{1}}\Big(\widetilde{\eta}_{1}(x,t;\widetilde{\sigma}(t))\langle Z(t),\partial_{k}\widetilde{\xi}_{2}(x,t;\widetilde{\sigma}(t))\rangle+\widetilde{\eta}_{2}(x,t;\widetilde{\sigma}(t))\langle Z(t),\partial_{k}\widetilde{\xi}_{1}(x,t;\widetilde{\sigma}(t))\rangle\Big)\nonumber\\
&&-\ \ \ \sum_{\ell=3}^{5}\frac{1}{n_{\ell}}\Big(\widetilde{\eta}_{\ell}(x,t;\widetilde{\sigma}(t))\langle Z(t),\partial_{k}\widetilde{\xi}_{\ell+3}(x,t;\widetilde{\sigma}(t))\rangle\nonumber\\
&&\ \ \ \ \ \ \ \ \ \ \ \ \ \ \ \ \ \ \ \ \ \ +\widetilde{\eta}_{\ell+3}(x,t;\widetilde{\sigma}(t))\langle Z(t),\partial_{k}\widetilde{\xi}_{\ell}(x,t;\widetilde{\sigma}(t))\rangle\Big).\nonumber
\end{eqnarray}
which combined the endpoint Strichartz estimates for $Z$ (see Theorem \ref{thm-2}) lead to
\begin{eqnarray}\label{eq-306}
\big\|P_{b}(t)\partial_{k}Z(t)\big\|_{L^{2}_{t}L^{6}_{x}}\lesssim \big\|Z\big\|_{L^{2}_{t}L^{6}_{x}}\lesssim \|Z_{0}\|_{L^{2}}+\|F\|_{L^{\tilde{p}'}_{t}L^{\tilde{q}'}_{x}}+B:=\widetilde{B}.
\end{eqnarray}
That is, $\partial_{k}Z$ ($k=1,2,3$) satisfies (\ref{eq-302}) for some constant $\widetilde{B}$.
Finally, Using the endpoint Strichartz estimates (\ref{eq-303}) again we have
\begin{eqnarray}\label{eq-308}
\big\|\partial_{k}Z\big\|_{L^{p}_{t}L^{q}_{x}}&\lesssim& \big\|\partial_{k}Z_{0}\big\|_{L^{2}}+\big\|\big(\partial_{k}\mathcal {V}_{1\epsilon}+\partial_{k}\mathcal {V}_{2}(t,\widetilde{\sigma}(t))\big)Z\big\|_{L^{2}_{t}L^{6/5}_{x}}+\big\|\partial_{k}F\big\|_{L^{\tilde{p}'}_{t}L^{\tilde{q}'}_{x}}+\widetilde{B}\nonumber\\
&\lesssim& \big\|\partial_{k}Z_{0}\big\|_{L^{2}}+\big\|Z\big\|_{L^{2}_{t}L^{6}_{x}}+\big\|\partial_{k}F\big\|_{L^{\tilde{p}'}_{t}L^{\tilde{q}'}_{x}}+\widetilde{B}\nonumber\\
&\lesssim&\|Z_{0}\|_{H^{1}}+\|F\|_{L^{\tilde{p}'}_{t}W^{1,\tilde{q}'}_{x}}+B.
\end{eqnarray}
Thus we finish the proof.
\end{proof}



\subsection{The completion of proof }
We first close the estimate for $\sigma$ by the following proposition.
\begin{pro}\label{pro-23}
Assume the separation(\ref{eq-211}) and spectral assumptions hold. Let $\widetilde{\sigma}, Z$ be any choice of functions that satisfy the bootstrap assumptions for sufficiently small $\epsilon>0$. Then $$\|\dot{\sigma}\|_{L^{1}_{t}}\leq \frac{1}{2}\epsilon^{2\alpha}.$$
\end{pro}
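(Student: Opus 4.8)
The goal is to upgrade the bootstrap hypothesis $\|\dot\sigma\|_{L^1_t}\le\epsilon^{2\alpha}$ into the strictly better bound $\|\dot\sigma\|_{L^1_t}\le\tfrac12\epsilon^{2\alpha}$, using only the bootstrap assumption $\|Z\|_{L^2_tW^{1,6}_x\cap L^\infty_tH^1_x}\le C_0^{-1}\epsilon^\alpha$ and the modulation ODE system of Proposition \ref{pro-22}. The starting point is that system: each component of $\dot\sigma$ is, after dividing by the (uniformly positive) normalizing factors $\langle\partial_\mu\phi(\sigma(t)),\phi(\sigma(t))\rangle$ and $\|\phi(\sigma(t))\|_{L^2}^2$, expressed in terms of three kinds of quantities — the potential term $\langle\mathscr{V}_\epsilon(\sigma(t)),\xi_j(\cdot,t;\sigma(t))\rangle$, the genuinely nonlinear term $\langle\mathcal{O}(|Z|^q)+\mathcal{O}(|Z|^2|w(\sigma(t))|^{q-2}),\xi_j\rangle$, and the term $\langle Z(t),\dot\sigma(t)\Phi_j(\sigma(t))\rangle$ which contains $\dot\sigma$ itself and must be absorbed into the left-hand side. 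So the first step is to set up this absorption: collect the $\langle Z,\dot\sigma\Phi_j\rangle$ terms, bound $|\langle Z(t),\dot\sigma(t)\Phi_j(\sigma(t))\rangle|\le C|\dot\sigma(t)|\,\|Z(t)\|_{L^6_x}$ using (\ref{eq-220}) and the exponential localization of $\phi$ and its derivatives (so that $\Phi_j(\sigma(t))\in L^{6/5}_x$ with a bound uniform in $t$), and conclude that for $\epsilon$ small the coefficient matrix of $\dot\sigma$ on the left is invertible with inverse bounded uniformly, giving
\begin{eqnarray}
|\dot\sigma(t)|\ \lesssim\ \big|\langle\mathscr{V}_\epsilon(\sigma(t)),\Xi(t)\rangle\big|+\big\|\,|Z(t)|^q+|Z(t)|^2|w(\sigma(t))|^{q-2}\,\big\|_{L^1_x}\,\|\Xi(t)\|_{L^\infty_x},\nonumber
\end{eqnarray}
where $\Xi(t)$ denotes the relevant combination of the $\xi_j$'s; here one uses that $\|Z(t)\|_{L^6_x}\lesssim\epsilon^\alpha$ stays small so the absorption is legitimate on the whole time interval under consideration.

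Next I would estimate the $L^1_t$ norm of each piece on the right. For the potential term, $\mathscr{V}_\epsilon(\sigma(t))=(V_\epsilon w(\sigma(t)),-V_\epsilon\overline{w(\sigma(t))})^T$ and each $\xi_j$ is an exponentially localized bump centered at $y(t;\sigma(t))$; pairing against $V_\epsilon$, which is supported in the ball of radius $\epsilon^{-1}$ around the origin and has size $\epsilon^2$, while the soliton center satisfies the separation estimate (\ref{eq-211}), namely $|a_{T_0}+\upsilon_{T_0}t|\ge c_1\epsilon^{-1}+c_0\epsilon t$, gives an overlap integral that decays exponentially in the separation distance. Quantitatively, $|\langle\mathscr{V}_\epsilon(\sigma(t)),\Xi(t)\rangle|\lesssim\epsilon^2 e^{-c(c_1\epsilon^{-1}+c_0\epsilon t)}$ roughly (up to polynomial factors), whose $L^1_t$ norm over $[0,\infty)$ is $\lesssim\epsilon^2\cdot\epsilon^{-1}e^{-cc_1\epsilon^{-1}}$, which is super-polynomially small in $\epsilon$ and in particular $\ll\epsilon^{2\alpha}$. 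For the nonlinear term, I would use $\|w(\sigma(t))\|_{L^\infty_x}\lesssim1$ and interpolation / Hölder together with the bootstrap bounds: $\||Z|^q\|_{L^1_x}\lesssim\|Z\|_{L^q_x}^q$ and, since $\tfrac73<q\le5$ and $Z\in L^\infty_tH^1_x\hookrightarrow L^\infty_tL^6_x$, control $\|Z(t)\|_{L^q_x}$ by interpolating between $L^2_x$ and $L^6_x$; crucially one exploits the $L^2_tL^6_x$ (and $L^2_tW^{1,6}_x$) membership of $Z$ to get a gain in the time integration. The upshot should be
\begin{eqnarray}
\big\|\,\|\,|Z|^q+|Z|^2|w(\sigma)|^{q-2}\|_{L^1_x}\,\big\|_{L^1_t}\ \lesssim\ \|Z\|_{L^\infty_tH^1_x}^{q-2}\,\|Z\|_{L^2_tL^6_x}^{2}\ \lesssim\ (C_0^{-1}\epsilon^\alpha)^{q}\ \lesssim\ C_0^{-q}\epsilon^{q\alpha},\nonumber
\end{eqnarray}
and since $q>\tfrac73>2$ we have $q\alpha>2\alpha$, so this term is $\lesssim C_0^{-q}\epsilon^{q\alpha}\le\tfrac14\epsilon^{2\alpha}$ once $\epsilon$ is small (the extra power $\epsilon^{(q-2)\alpha}$ does the job, and the $C_0^{-q}$ only helps). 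Adding the (negligible) potential contribution then yields $\|\dot\sigma\|_{L^1_t}\le\tfrac12\epsilon^{2\alpha}$.

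\textbf{Main obstacle.} The delicate point is the absorption step combined with getting the \emph{right} power of $\epsilon$ out of the nonlinear term rather than merely $\epsilon^{2\alpha}$ with a borderline constant. One must be careful that the time integration really produces the product $\|Z\|_{L^\infty_tH^1}^{q-2}\|Z\|_{L^2_tL^6}^2$ — i.e. that the two factors of $L^6$ in space are paired with the $L^2$-in-time norm — because $L^\infty_tL^1_x$ of $|Z|^q$ is \emph{not} integrable in time on its own; this is exactly why the endpoint Strichartz space $L^2_tW^{1,6}_x$ (not just $L^\infty_tH^1_x$) appears in the bootstrap assumption, and the proof must use it essentially. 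A secondary technical nuisance is verifying the uniform-in-$t$ localization bounds for $\Phi_j(\sigma(t))$ and $\xi_j(\cdot,t;\sigma(t))$ in $L^{6/5}_x$ and $L^\infty_x$; this follows from the exponential decay of the ground state $\phi$ and the fact that $\sigma(t)$ stays in the admissible range $|\sigma(t)-\sigma(0)|<c$ (itself a consequence of the bootstrap bound on $\dot\sigma$), so that the modulation parameters $\mu(t)$ remain bounded away from the degenerate regime. Once these localization facts and the separation estimate (\ref{eq-211}) are in hand, the estimate closes with room to spare, the gain coming from $q\alpha-2\alpha=(q-2)\alpha>0$ and from the super-polynomial smallness of the potential overlap.
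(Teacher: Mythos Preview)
Your outline follows the same architecture as the paper's proof: invert the modulation system using the convexity condition, then bound the potential term via the separation condition and exponential decay of $\phi$, and the nonlinear term via the Strichartz bootstrap norms. However, there are two related issues in your nonlinear estimate.

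First, by pairing the pure $|Z|^q$ term against $\xi_j$ only through $\|\xi_j\|_{L^\infty_x}$ you are forced to control $\|\,|Z|^q\,\|_{L^1_tL^1_x}$, and the claimed bound $\|Z\|_{L^\infty_tH^1}^{q-2}\|Z\|_{L^2_tL^6}^2$ is not achievable across the whole range $\tfrac73<q\le5$. Interpolating $L^q$ between $L^2$ and $L^6$ gives $\|Z\|_{L^q}^q\le\|Z\|_{L^2}^{(6-q)/2}\|Z\|_{L^6}^{3(q-2)/2}$, and for $q<10/3$ the exponent $3(q-2)/2<2$ on $\|Z\|_{L^6}$ is too small to integrate in time on $[0,\infty)$ using only $L^2_tL^6_x$. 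The paper avoids this by exploiting the localization of $\xi_j$ more fully: it places $\xi_j$ in $L^m_x$ for a suitable finite $m$ and uses the three-way H\"older split
\[
|\langle|Z|^q,\xi_j\rangle|\le\||Z|^2\|_{L^3_x}\,\||Z|^{q-2}\|_{L^n_x}\,\|\xi_j\|_{L^m_x},
\]
choosing $n>\tfrac32$ so that $2\le n(q-2)\le6$; this always extracts exactly two $L^6_x$ factors on $Z$ and places the remaining $|Z|^{q-2}$ in a space controlled by $L^\infty_tH^1_x$.

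Second, your identification of the source of the gain is off. The term $|Z|^2|w(\sigma(t))|^{q-2}$, after pairing with $\xi_j$, yields only $\lesssim\|Z\|_{L^2_tL^6_x}^2\lesssim C_0^{-2}\epsilon^{2\alpha}$, not $\epsilon^{q\alpha}$: there is no extra factor $\|Z\|^{q-2}$ here, since $|w|^{q-2}$ is a fixed localized weight independent of $Z$. Hence the improvement from $\epsilon^{2\alpha}$ to $\tfrac12\epsilon^{2\alpha}$ comes from the prefactor $C_0^{-2}$ with $C_0$ taken sufficiently large --- this is precisely why the bootstrap assumption (\ref{eq-222}) carries the constant $C_0^{-1}$ --- and not from an extra power $\epsilon^{(q-2)\alpha}$. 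With these two corrections your argument coincides with the paper's.
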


\begin{proof}
By the convexity condition (see (II) of Proposition \ref{pro-21}) and the bootstrap assumption (\ref{eq-222}), the left hand side of the system (\ref{ex-25}) is of form $B(t)\dot{\widetilde{\sigma}}$ with $B(t)$ an invertible matrix of order $\mathcal{O}(1)$. Then we have for $\frac{7}{3}<q\leq 5$,
\begin{eqnarray}\label{eq-223}
\|\dot{\sigma}\|_{L^{1}_{t}}&\leq& \sum_{j}\Big(\big\|\big\langle\mathscr{V}_{\epsilon}(\sigma(t)),\xi_{j}(\cdot,t;\sigma(t))\big\rangle\big\|_{L^{1}_{t}} +
\big\|\big\langle\mathcal {O}(|Z|^{q})+\mathcal {O}(|Z|^{2}|w(\sigma(t))|^{q-2}),\xi_{j}(\cdot,t;\sigma(t))\big\rangle\big\|_{L^{1}_{t}}\Big)\nonumber
\end{eqnarray}
Notice that for each $\frac{7}{3}<q\leq 5$, we can always choose $n>\frac{3}{2}$ such that $2\leq n(q-2)\leq 6$ and then
\begin{eqnarray}
&&\big\|\big\langle\mathcal {O}(|Z|^{q})+\mathcal {O}(|Z|^{2}|w(\sigma(t))|^{q-2}),\xi_{j}(\cdot,t;\sigma(t))\big\rangle\big\|_{L^{1}_{t}}\nonumber\\
&\leq&C\big\||Z|^{2}\big\|_{L^{1}_{t}L^{3}_{x}}+C\big\||Z|^{2}\big\|_{L^{1}_{t}L^{3}_{x}}\big\||Z|^{q-2}\big\|_{L^{\infty}_{t}L^{n}_{x}}\nonumber\\
&\leq&C\big\|Z\big\|^{2}_{L^{2}_{t}L^{6}_{x}}+C\big\|Z\big\|^{2}_{L^{2}_{t}L^{6}_{x}}\big\|Z\big\|^{q-2}_{L^{\infty}_{t}L^{n(q-2)}_{x}}\nonumber\\
&\leq&CC_{0}^{-2}\epsilon^{2\alpha}\leq \frac{1}{2}\epsilon^{2\alpha},\nonumber
\end{eqnarray}
where we use the fact that $C_{0}$ is a sufficiently large constant in the last inequality. On the other hand, by bootstrap assumption, we have
$\sigma(t)\sim\sigma_{T_{0}}$ for all $t$, which combined the exponentially decay of $\phi$, supp $V\subset B(0,1)$ and the separation inequality (\ref{eq-211}) lead to
\begin{eqnarray}\label{eq-223'}
\big\|\big\langle\mathscr{V}_{\epsilon}(\sigma(t)),\xi_{j}(\cdot,t;\sigma(t))\big\rangle\big\|_{L^{1}_{t}}
&\leq&C \Big\|\int_{\mathbb{R}^{n}}e^{-\mu(0)|x-(a(0)+\upsilon(0)t)|}V_{\epsilon}(x)dx\Big\|_{L^{1}_{t}}\nonumber\\
&\leq&C\Big\|\int_{\mathbb{R}^{n}}e^{-\mu(0)(|(a(0)+\upsilon(0)t)|-|x|)}V_{\epsilon}(x)dx\Big\|_{L^{1}_{t}}\nonumber\\
&\leq& C\epsilon^{k}
\end{eqnarray}
for any $k>0$. Hence we finish the proof.
\end{proof}

It remains to verify the bootstrap assumption (\ref{eq-222}) for the perturbation $Z$. We first rewrite the equation (\ref{eq-z-1}) for $Z$ as follows,
\begin{eqnarray}\label{eq-224}
&&i\partial_{t}Z=\mathscr{H}(t,\widetilde{\sigma}(t))Z+F(t),\nonumber\\
&&Z(0)=(R(0),\overline{R}(0))^{T},
\end{eqnarray}
where
$$F(t)=\big(\mathscr{H}(t,\sigma(t))-\mathscr{H}(t,\widetilde{\sigma}(t))\big)Z+\mathscr{N}(\sigma(t))+\mathscr{V}_{\epsilon}(\sigma(t))+\mathcal {O}(|Z|^{q})+\mathcal {O}(|Z|^{2}|w(\sigma(t))|^{q-2})$$
with $2<q\leq 5$ and
$\mathscr{N}(\sigma(t))$ and $\mathscr{V}_{\epsilon}(\sigma(t))$ defined by (\ref{eq-27}) and (\ref{eq-28}), respectively.
And then by using the already proved bootstrap estimates for $\sigma$  and the Strichartz estimates for matrix Schr\"{o}dinger equation (\ref{eq-224}) (see Theorem \ref{thm-2} and Proposition \ref{pro-31}), we could finally finish the proof.
To do this, let us begin with the following lemma which verifies the assumption (\ref{eq-302}) in Theorem \ref{thm-2}.

\begin{lem}\label{le-43}
Let $Z$ satisfies the bootstrap assumption (\ref{eq-222}) and the orthogonality condition
\begin{eqnarray}\label{orth}
\big\langle Z(t),\xi_{j}(x,t;\sigma(t))\big\rangle=0
\end{eqnarray}
with respect to an admissible path $\sigma(t)$ obeying the bootstrap estimates (\ref{eq-221}). Then we have
\begin{eqnarray}\label{eq-229}
\|P_{b}(t)Z(t)\|_{L^{2}_{t}L^{6}_{x}}\lesssim \epsilon^{3\alpha}.
\end{eqnarray}
\end{lem}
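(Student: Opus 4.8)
\textbf{Proof proposal for Lemma \ref{le-43}.}
The goal is to bound $\|P_b(t)Z(t)\|_{L^2_tL^6_x}$ by $\epsilon^{3\alpha}$, using the orthogonality condition \eqref{orth} together with the explicit formula \eqref{eq-216-1} for $P_b(t)$ written in terms of the transported eigenvectors $\widetilde{\eta}_j$ and $\widetilde{\xi}_j$. The plan is to exploit the mismatch between the \emph{true} path $\sigma(t)$, along which $Z$ is orthogonal to the $\xi_j(\cdot,t;\sigma(t))$, and the \emph{frozen} path $\widetilde{\sigma}(t)$ appearing in $P_b(t)$: the pairings $\langle Z(t),\widetilde{\xi}_j(x,t;\widetilde{\sigma}(t))\rangle$ would vanish identically if the two paths agreed, so what survives is controlled by the difference $|\sigma(t)-\widetilde{\sigma}(t)|$, which by construction of $\widetilde{\sigma}$ in \eqref{ex-226}--\eqref{eq-42} is governed by $\int_0^t|\dot\sigma(s)|\,ds$, hence by $\|\dot\sigma\|_{L^1_t}\lesssim\epsilon^{2\alpha}$ via the bootstrap estimate \eqref{eq-221}.

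First I would write, for each $j$, $\langle Z(t),\widetilde{\xi}_j(x,t;\widetilde{\sigma}(t))\rangle = \langle Z(t),\widetilde{\xi}_j(x,t;\widetilde{\sigma}(t))-\xi_j(x,t;\sigma(t))\rangle$, using \eqref{orth} to drop the would-be leading term. Next I would estimate the difference of the two vector fields: both are built by applying modulation operators ($B_{\beta,y,\upsilon}$-type translations/phase rotations) to a fixed ground-state profile, so the difference is of the form (smooth bounded kernel)$\times(\beta(t)-\widetilde\beta$-type and $y(t)-\widetilde y$-type quantities)$\times\phi$-profiles, and these parameter differences are exactly $\int_0^t(\upsilon(s)-\upsilon_{T_0})\,ds$ and similar integrals, i.e. $O(\int_0^t|\dot\sigma(s)|\,ds)\le\|\dot\sigma\|_{L^1_t}\le\epsilon^{2\alpha}$, since $\upsilon(0)-\upsilon_{T_0}=0$. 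Because these profiles decay exponentially (ground state $\phi$), their $L^2_x$ norms are $O(1)$ uniformly in $t$, so Cauchy--Schwarz in $x$ gives $|\langle Z(t),\widetilde{\xi}_j(x,t;\widetilde{\sigma}(t))\rangle|\lesssim \|Z(t)\|_{L^2_x}\,\epsilon^{2\alpha}$. Then, since $\widetilde{\eta}_j(x,t;\widetilde{\sigma}(t))$ again have $O(1)$ $L^6_x$ norms (exponential decay), \eqref{eq-216-1} yields the pointwise-in-$t$ bound $\|P_b(t)Z(t)\|_{L^6_x}\lesssim \epsilon^{2\alpha}\|Z(t)\|_{L^2_x}$; taking $L^2_t$ and using $\|Z\|_{L^\infty_tL^2_x}\lesssim\|Z\|_{L^\infty_tH^1_x}\le C_0^{-1}\epsilon^\alpha$ from \eqref{eq-222}, together with the finiteness of the relevant time interval or a decay-in-$t$ factor, produces $\|P_b(t)Z(t)\|_{L^2_tL^6_x}\lesssim\epsilon^{3\alpha}$.

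The main obstacle I anticipate is the $L^2_t$ integrability: the naive bound above gives $\|\epsilon^{2\alpha}\|Z(t)\|_{L^2_x}\|_{L^2_t}$, and $\|Z(t)\|_{L^2_x}$ is only bounded, not integrable in $t$, over $(0,\infty)$. One must therefore gain time-decay, and this is where the \emph{separation} hypothesis \eqref{eq-211} enters: on the region where the solitary wave has left the support of $V_\epsilon$, one can trade the slowly-growing parameter integrals against the exponential spatial decay of $\phi$ against $V_\epsilon$ (as in the estimate \eqref{eq-223'} of Proposition \ref{pro-23}), or alternatively feed the pointwise bound back into a Gr\"onwall/bootstrap loop using the $L^2_tL^6_x$ part of \eqref{eq-222} on the right-hand side instead of $L^\infty_tL^2_x$. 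Concretely, I expect the cleaner route is: bound $|\langle Z(t),\widetilde{\xi}_j\rangle|$ by $\epsilon^{2\alpha}\|\langle x\rangle^{-N}Z(t)\|_{L^2_x}$ for large $N$ (the profiles are Schwartz-localized near the soliton center), so the $t$-integral becomes a weighted-in-spacetime norm of $Z$, which is itself controlled by the local-decay/Strichartz output of Theorem \ref{thm-2} at level $O(\epsilon^\alpha)$, giving the product $\epsilon^{2\alpha}\cdot\epsilon^\alpha=\epsilon^{3\alpha}$. Verifying that this weighted norm is indeed dominated by the already-available bounds, uniformly in $\epsilon$, is the delicate point and is precisely what the $\epsilon$-uniform estimates of Theorem \ref{thm-2} are designed to supply.
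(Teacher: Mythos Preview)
Your overall strategy---use the orthogonality \eqref{orth} to replace $\langle Z(t),\widetilde{\xi}_j\rangle$ by $\langle Z(t),\widetilde{\xi}_j-\xi_j\rangle$, then bound the difference $\widetilde{\xi}_j-\xi_j$ by $\|\dot\sigma\|_{L^1}\lesssim\epsilon^{2\alpha}$ times an exponentially localized profile---is exactly what the paper does. The gap is in the H\"older pairing you choose afterwards.

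You apply Cauchy--Schwarz to get $|\langle Z(t),\widetilde{\xi}_j-\xi_j\rangle|\lesssim\epsilon^{2\alpha}\|Z(t)\|_{L^2_x}$, and then correctly note that $\|Z(t)\|_{L^2_x}$ is not in $L^2_t$. But the localized profile in $\widetilde{\xi}_j-\xi_j$ (essentially $\langle x-y\rangle\phi(x-y)$) lies in $L^{6/5}_x$ with uniformly bounded norm, so the right move is the $L^6$--$L^{6/5}$ H\"older pairing:
\[
|\langle Z(t),\widetilde{\xi}_j-\xi_j\rangle|\ \lesssim\ \epsilon^{2\alpha}\,\|Z(t)\|_{L^6_x}.
\]
Since $\|\widetilde\eta_j\|_{L^6_x}=O(1)$, this gives $\|P_b(t)Z(t)\|_{L^6_x}\lesssim\epsilon^{2\alpha}\|Z(t)\|_{L^6_x}$ pointwise in $t$, and now $\|Z\|_{L^2_tL^6_x}\le C_0^{-1}\epsilon^\alpha$ is already part of the bootstrap assumption \eqref{eq-222}. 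This yields \eqref{eq-229} directly, with no need for time-decay, separation, or weighted norms. You actually mention this option in passing (``using the $L^2_tL^6_x$ part of \eqref{eq-222}''), and that is the entire fix.

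Your fallback route---controlling a weighted norm $\|\langle x\rangle^{-N}Z\|_{L^2_tL^2_x}$ via the local-decay output of Theorem~\ref{thm-2}---is problematic: Lemma~\ref{le-43} is precisely what supplies the constant $B$ in hypothesis \eqref{eq-302} of Theorem~\ref{thm-2}, so invoking Theorem~\ref{thm-2} here would be circular. The separation hypothesis \eqref{eq-211} is likewise irrelevant to this lemma; it governs the interaction of $V_\epsilon$ with the soliton profile, not the comparison of $\xi_j(\sigma)$ with $\widetilde\xi_j(\widetilde\sigma)$.
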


\begin{proof}
Notice that for all $t>0$
\begin{eqnarray}\label{eq-230}
&&\big|\theta(x,t;\sigma(t))-\widetilde{\theta}(x,t;\widetilde{\sigma}(t))\big|\nonumber\\
&\leq&\big|\gamma(t)-\gamma(0)\big|+\big|\big(\upsilon(t)-\upsilon(0)\big)\big(x-y(t;\sigma(t))\big)\big|+\int^{t}_{0}\big|\big(\upsilon(s)-\upsilon(0)\big)\dot{a}(s)\big|ds\nonumber\\
&\leq&\|\dot{\sigma}\|_{L^{1}}\big(1+\|\dot{\sigma}\|_{L^{1}}+\big|x-y(t,\sigma(t))\big|\big)
\end{eqnarray}
and
\begin{eqnarray}\label{eq-231}
\big|y(t;\sigma(t))-\widetilde{y}(t;\widetilde{\sigma}(t))\big|\leq \|\dot{\sigma}\|_{L^{1}}.
\end{eqnarray}
On the other hand, it follows from (\ref{eq-216-1}) and the orthogonality condition (\ref{orth}) that
\begin{eqnarray}\label{eq-232}
P_{b}(t)Z(t)
&=&\frac{1}{n_{1}}\Big(\widetilde{\eta}_{1}(x,t;\widetilde{\sigma}(t))\langle Z(t),\widetilde{\xi}_{2}(x,t;\widetilde{\sigma}(t))\rangle+\widetilde{\eta}_{2}(x,t;\widetilde{\sigma}(t))\langle Z(t),\widetilde{\xi}_{1}(x,t;\widetilde{\sigma}(t))\rangle\Big)\nonumber\\
&&\ \ \ \ \ \ \ \ \ +\ \sum_{\ell=3}^{5}\frac{1}{n_{\ell}}\Big(\widetilde{\eta}_{\ell}(x,t;\widetilde{\sigma}(t))\langle Z(t),\widetilde{\xi}_{\ell+3}(x,t;\widetilde{\sigma}(t))\rangle\nonumber\\
&&\ \ \ \ \ \ \ \ \ \ \ \ \ \ \ \ \ \ \ \ \ \ +\widetilde{\eta}_{\ell+3}(x,t;\widetilde{\sigma}(t))\langle Z(t),\widetilde{\xi}_{\ell}(x,t;\widetilde{\sigma}(t))\rangle\Big)\nonumber\\
&=&\frac{1}{n_{1}}\Big(\widetilde{\eta}_{1}(x,t;\widetilde{\sigma}(t))\langle Z(t),\widetilde{\xi}_{2}(x,t;\widetilde{\sigma}(t))-\xi_{2}(x,t;\sigma(t))\rangle\nonumber\\
&&\ \ \ \ \ \ \ \ +\ \widetilde{\eta}_{2}(x,t;\widetilde{\sigma}(t))\langle Z(t),\widetilde{\xi}_{1}(x,t;\widetilde{\sigma}(t))-\xi_{1}(x,t;\sigma(t))\rangle\Big)\nonumber\\
&&\ \ \ \ \ \ \  \ \ \ \ + \sum_{\ell=3}^{5}\frac{1}{n_{\ell}}\Big(\widetilde{\eta}_{\ell}(x,t;\widetilde{\sigma}(t))\langle Z(t),\widetilde{\xi}_{\ell+3}(x,t;\widetilde{\sigma}(t))-\xi_{\ell+3}(x,t;\sigma(t))\rangle\nonumber\\
&&\ \ \ \ \ \ \ \ \ \ \ \ \ \ \ \ \ \ \ \ \ \ +\ \widetilde{\eta}_{\ell+3}(x,t;\widetilde{\sigma}(t))\langle Z(t),\widetilde{\xi}_{\ell}(x,t;\widetilde{\sigma}(t))-\xi_{\ell}(x,t;\sigma(t))\rangle\Big).
\end{eqnarray}
Moreover, by using (\ref{eq-230})-(\ref{eq-231}), we have
\begin{eqnarray}\label{eq-233}
\big|\widetilde{\xi}_{1}(x,t;\widetilde{\sigma}(t))-\xi_{1}(x,t;\sigma(t))\big|
&\lesssim&\big|e^{i\widetilde{\theta}(t,x,\widetilde{\sigma}(t))}\phi(x-\widetilde{y}(t,\widetilde{\sigma}(t)))-e^{i\theta(t,x,\sigma(t))}\phi(x-y(t,\sigma(t)))\big|\nonumber\\
&\lesssim&\big|e^{i\widetilde{\theta}(t,x,\widetilde{\sigma}(t))}\phi(x-\widetilde{y}(t,\widetilde{\sigma}(t)))-e^{i\widetilde{\theta}(t,x,\widetilde{\sigma}(t))}\phi(x-y(t,\sigma(t)))\big|\nonumber\\
&&\ +\ \big|e^{i\widetilde{\theta}(t,x,\widetilde{\sigma}(t))}\phi(x-y(t,\sigma(t)))
-e^{i\theta(t,x,\sigma(t))}\phi(x-y(t,\sigma(t)))\big|\nonumber\\
&\lesssim&\|\dot{\sigma}\|_{L^{1}}\big(2+\|\dot{\sigma}\|_{L^{1}}+\big|x-y(t,\sigma(t))\big|\big)\phi(x-y(t,\sigma(t))),
\end{eqnarray}
similar estimates also hold for $2\leq j\leq 8$.
Thus by (\ref{eq-232})-(\ref{eq-233}), it follows
\begin{eqnarray}\label{eq-234}
\big\|P_{b}(t)Z(t)\big\|_{L^{2}_{t}L^{6}_{x}}\lesssim \|Z\|_{L^{2}_{t}L^{6}_{x}}\|\dot{\sigma}\|_{L^{1}}\lesssim \epsilon^{3\alpha}.
\end{eqnarray}
\end{proof}

\begin{pro}\label{pro-24}
Let $Z$ be a solution of the equation (\ref{eq-224}) satisfying the bootstrap assumption (\ref{eq-222}). We also assume (due to Proposition \ref{pro-23}) that the admissible path $\sigma(t)$ obey the estimate (\ref{eq-221}). Then we have the following estimates
\begin{eqnarray}\label{eq-235}
\big\|Z\big\|_{L^{2}_{t}W^{1,6}_{x}\cap L^{\infty}_{t}H^{1}_{x}}\leq C_{0}^{-1}\frac{\epsilon^{\alpha}}{2}.
\end{eqnarray}
\end{pro}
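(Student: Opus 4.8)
The plan is to treat (\ref{eq-224}) as an inhomogeneous matrix Schr\"odinger equation governed by the charge transfer Hamiltonian $\mathscr{H}(t,\widetilde{\sigma}(t))$, apply the endpoint Strichartz estimates of Theorem \ref{thm-2} and Proposition \ref{pro-31}, and estimate each piece of the forcing term $F(t)$ so that it gains a positive power of $\epsilon$ beyond $C_{0}^{-1}\epsilon^{\alpha}$; the bootstrap then closes once $\epsilon$ is taken small in terms of the \emph{fixed} constant $C_{0}$. First I would check the hypotheses of those two statements. The bound $\|\dot{\sigma}\|_{L^{1}_{t}}\le\epsilon^{2\alpha}$ (the assumption (\ref{eq-221}), improved by Proposition \ref{pro-23}) forces $|\sigma(t)-\sigma_{T_{0}}|$ and $|\widetilde{\sigma}(t)-\sigma_{T_{0}}|$ to be $\lesssim\epsilon^{2\alpha}$, so $\mathscr{H}(t,\widetilde{\sigma}(t))$ meets the separation and spectral assumptions required by Theorem \ref{thm-2} uniformly in $t$; and Lemma \ref{le-43} supplies $\|P_{b}(t)Z(t)\|_{L^{2}_{t}L^{6}_{x}}\lesssim\epsilon^{3\alpha}$, which is exactly (\ref{eq-302}) with $B=\epsilon^{3\alpha}$. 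Proposition \ref{pro-31}, applied with the pairs $(p,q)=(2,6)$ and $(p,q)=(\infty,2)$ and with a dual admissible pair chosen separately for each term of $F$, then gives
\[
\big\|Z\big\|_{L^{2}_{t}W^{1,6}_{x}\cap L^{\infty}_{t}H^{1}_{x}}\ \lesssim\ \|Z_{0}\|_{H^{1}}+\|F\|_{L^{\tilde{p}'}_{t}W^{1,\tilde{q}'}_{x}}+\epsilon^{3\alpha},
\]
where $\|Z_{0}\|_{H^{1}}=\mathcal{O}(\epsilon^{4-\delta_{0}})$ with $4-\delta_{0}>\alpha$ already carries the required gain.

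The core of the argument is to bound the terms in $F(t)=\big(\mathscr{H}(t,\sigma(t))-\mathscr{H}(t,\widetilde{\sigma}(t))\big)Z+\mathscr{N}(\sigma(t))+\mathscr{V}_{\epsilon}(\sigma(t))+\mathcal{O}(|Z|^{q})+\mathcal{O}(|Z|^{2}|w(\sigma(t))|^{q-2})$. For the first term, $\mathscr{H}(t,\sigma(t))-\mathscr{H}(t,\widetilde{\sigma}(t))=\mathcal{V}_{2}(t,\sigma(t))-\mathcal{V}_{2}(t,\widetilde{\sigma}(t))$, and both this matrix and its $x$-derivative are exponentially localized around the soliton with size $\lesssim\|\dot{\sigma}\|_{L^{1}_{t}}$ (the pointwise bounds (\ref{eq-230})--(\ref{eq-233}) already used in Lemma \ref{le-43}); with the dual pair $(2,6)$ and H\"older $L^{6/5}=L^{3/2}\cdot L^{6}$ this contributes $\lesssim\epsilon^{2\alpha}\|Z\|_{L^{2}_{t}W^{1,6}_{x}}\le C_{0}^{-1}\epsilon^{3\alpha}$. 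For $\mathscr{N}(\sigma(t))$, every summand is $|\dot{\sigma}|$ times one of the Schwartz-localized profiles $(x-y)w$, $w$, $e^{i\theta}\nabla\phi$, $e^{i\theta}\partial_{\mu}\phi$, whose $W^{1,2}_{x}$ norms are $\mathcal{O}(1)$; using the dual pair $(\infty,2)$, $\|\mathscr{N}(\sigma(t))\|_{L^{1}_{t}W^{1,2}_{x}}\lesssim\|\dot{\sigma}\|_{L^{1}_{t}}\lesssim\epsilon^{2\alpha}$. For $\mathscr{V}_{\epsilon}(\sigma(t))$, since $\supp V_{\epsilon}\subset B(0,\epsilon^{-1})$ while the separation (\ref{eq-211}) keeps $\phi(\,\cdot-y(t))$ at distance $\gtrsim\epsilon^{-1}+\epsilon t$ from it, the exponential decay of $\phi$ together with the scalings $\|V_{\epsilon}\|_{L^{2}}\sim\epsilon^{1/2}$, $\|\nabla V_{\epsilon}\|_{L^{2}}\sim\epsilon^{3/2}$ yields $\|\mathscr{V}_{\epsilon}(\sigma(t))\|_{L^{1}_{t}W^{1,2}_{x}}\lesssim\epsilon^{-1/2}e^{-c/\epsilon}\lesssim\epsilon^{k}$ for every $k$, exactly as in (\ref{eq-223'}). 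Finally, for the nonlinear terms one exploits $\tfrac{7}{3}<q\le5$: writing $\partial_{k}(|Z|^{q})\sim|Z|^{q-1}\,\partial_{k}Z$ and applying H\"older $L^{6/5}=L^{3/2}\cdot L^{6}$ in $x$ and $L^{1}_{t}=L^{\infty}_{t}\cdot L^{2}_{t}$ in $t$, one needs $\big\||Z|^{q-1}\big\|_{L^{\infty}_{t}L^{3(q-1)/2}_{x}}\lesssim\|Z\|_{L^{\infty}_{t}H^{1}_{x}}^{q-1}$, which holds precisely because $2\le\tfrac32(q-1)\le6\Leftrightarrow\tfrac{7}{3}\le q\le5$ and $H^{1}(\mathbb{R}^{3})\hookrightarrow L^{p}$ for $2\le p\le6$; combined with $\|\partial_{k}Z\|_{L^{2}_{t}L^{6}_{x}}$ this gives $\lesssim(C_{0}^{-1}\epsilon^{\alpha})^{q-1}\cdot C_{0}^{-1}\epsilon^{\alpha}=C_{0}^{-q}\epsilon^{q\alpha}$. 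The term $\mathcal{O}(|Z|^{2}|w|^{q-2})$ is handled the same way ($|w|^{q-2}$ being bounded and localized), and $|Z|^{q}$ without a derivative likewise (or via $W^{1,6}(\mathbb{R}^{3})\hookrightarrow L^{\infty}$).

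Collecting all of this, $\|Z\|_{L^{2}_{t}W^{1,6}_{x}\cap L^{\infty}_{t}H^{1}_{x}}\le C\big(\epsilon^{4-\delta_{0}}+\epsilon^{2\alpha}+\epsilon^{3\alpha}+C_{0}^{-q}\epsilon^{q\alpha}+\epsilon^{k}\big)$ for an absolute constant $C$. Since $0<\alpha<4-\delta_{0}$ and $q>\tfrac{7}{3}>1$, every exponent on the right ($4-\delta_{0},\ 2\alpha,\ 3\alpha,\ q\alpha$) strictly exceeds $\alpha$, so for $\epsilon$ small (depending only on $C,C_{0},q,\delta_{0},\alpha$) the right-hand side is $\le\tfrac12C_{0}^{-1}\epsilon^{\alpha}$, which is (\ref{eq-235}) and closes the bootstrap. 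I expect the real difficulty to lie not in this bookkeeping but in the inputs it relies on: the $\epsilon$-uniform endpoint $L^{2}_{t}L^{6}_{x}$ Strichartz estimate of Theorem \ref{thm-2} for the time- and $\epsilon$-dependent matrix charge transfer Hamiltonian, and its $W^{1,6}$ upgrade in Proposition \ref{pro-31}; and, within the present proposition, the arrangement that forces \emph{every} piece of $F$ to gain a power of $\epsilon$ strictly larger than $\alpha$, which requires tracking simultaneously the smallness carried by $\|\dot{\sigma}\|_{L^{1}}$, the exponential separation of soliton and potential, and the mass-supercriticality $q>\tfrac{7}{3}$.
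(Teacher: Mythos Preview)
Your proposal is correct and follows essentially the same approach as the paper: apply Proposition~\ref{pro-31} (which rests on Theorem~\ref{thm-2}) with $B=\epsilon^{3\alpha}$ supplied by Lemma~\ref{le-43}, then bound each piece of $F$ in a dual Strichartz norm so that every term gains a power of $\epsilon$ strictly larger than $\alpha$. The only cosmetic differences are that the paper uses the H\"older split $\||Z|^{q}\|_{L^{2}_{t}W^{1,6/5}_{x}}\le\|Z\|_{L^{2}_{t}W^{1,6}_{x}}\|Z\|^{(5-q)/2}_{L^{\infty}_{t}L^{2}_{x}}\|Z\|^{(3q-7)/2}_{L^{\infty}_{t}L^{6}_{x}}$ (equivalent to yours via $H^{1}\hookrightarrow L^{p}$), and puts $\mathscr{V}_{\epsilon}$ in $L^{2}_{t}W^{1,6/5}_{x}$ rather than $L^{1}_{t}W^{1,2}_{x}$; note also that your time H\"older should read $L^{2}_{t}=L^{\infty}_{t}\cdot L^{2}_{t}$, not $L^{1}_{t}=L^{\infty}_{t}\cdot L^{2}_{t}$, but your conclusion is unaffected.
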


\begin{proof}Notice that when $\frac{7}{3}<q\leq 5$,
\begin{eqnarray}
\big\||Z|^{q}\big\|_{L^{2}_{t}W^{1,\frac{6}{5}}_{x}}\leq \|Z\|_{L^{2}_{t}W^{1,6}_{x}}\|Z\|^{\frac{5-q}{2}}_{L^{\infty}_{t}L^{2}_{x}}\|Z\|^{\frac{3q-7}{2}}_{L^{\infty}_{t}L^{6}_{x}}\nonumber
\end{eqnarray}
and similarly to (\ref{eq-233})
\begin{eqnarray}
\big\|\big(\mathscr{H}(t,\sigma(t))-\mathscr{H}(t,\widetilde{\sigma}(t))\big)Z\big\|_{L^{2}_{t}W^{1,\frac{6}{5}}_{x}}&\leq& \big\|\big(\mathcal {V}_{2}(t,\sigma(t))-\mathcal {V}_{2}(t,\widetilde{\sigma}(t))\big)Z\big\|_{L^{2}_{t}W^{1,\frac{6}{5}}_{x}}\nonumber\\
&\lesssim& \big\|\dot{\sigma}\big\|_{L^{1}}\big\|Z\big\|_{L^{2}_{t}W^{1,6}_{x}}\leq C_{0}^{-1}\epsilon^{3\alpha}.\nonumber
\end{eqnarray}
Moreover, by using Proposition \ref{pro-23} and similar procedures as in (\ref{eq-223'}), we obtain
$$\big\|\mathscr{N}(\sigma(t))\big\|_{L^{1}_{t}W^{1,2}_{x}}\lesssim \epsilon^{2\alpha}$$
and for any $k>0$
\begin{eqnarray}
\big\|\mathscr{V}_{\epsilon}(\sigma(t))\big\|_{L^{2}_{t}W^{1,\frac{6}{5}}_{x}}\lesssim \epsilon^{k}.\nonumber
\end{eqnarray}
Then it follows from the endpoint Strichartz estimates (\ref{pro-31}) and Lemma \ref{le-43} that
\begin{eqnarray}\label{eq-236}
\big\|Z\big\|_{L^{2}_{t}W^{1,6}_{x}\cap L^{\infty}_{t}H^{1}_{x}}&\lesssim& \big\|Z_{0}\big\|_{H^{1}}+\big\|\big(\mathscr{H}(t,\sigma(t))-\mathscr{H}(t,\widetilde{\sigma}(t))\big)Z\big\|_{L^{2}_{t}W^{1,\frac{6}{5}}_{x}}
+\big\|\mathscr{N}(\sigma(t))\big\|_{L^{1}_{t}W^{1,2}_{x}}\nonumber\\
&&\  +\ \big\|\mathscr{V}_{\epsilon}(\sigma(t))\big\|_{L^{2}_{t}W^{1,\frac{6}{5}}_{x}}+\big\||Z|^{q}\big\|_{L^{2}_{t}W^{1,\frac{6}{5}}_{x}}
+\big\||Z|^{2}|w(\sigma(t))|^{q-2}\big\|_{L^{2}_{t}W^{1,\frac{6}{5}}_{x}}+B\nonumber\\
&\lesssim&  \epsilon^{4-\delta_{0}}+C_{0}^{-1}\epsilon^{3\alpha}+\epsilon^{2\alpha}+\epsilon^{k}+C_{0}^{-q}\epsilon^{q\alpha}+C_{0}^{-2}\epsilon^{2\alpha}+\epsilon^{3\alpha}\nonumber\\
&\leq&C_{0}^{-1}\frac{\epsilon^{\alpha}}{2},\nonumber
\end{eqnarray}
where $B$ is defined as in (\ref{eq-302}).
\end{proof}

 Finally, we prove the scattering. To this end, we rewrite the solution to equation (\ref{Sch eq-4}) as
 \begin{eqnarray}\label{eq-ex-1}
 u(x,t)=w(x,t;\sigma_{+})+\big(w(x,t;\sigma(t))-w(x,t;\sigma_{+})\big)+R(x,t)
\end{eqnarray}
where $$w(x,t;\sigma_{+})=e^{i\theta_{+}(x,t)}\phi(x-\int_{0}^{t}\upsilon(s)ds-a_{+},\mu_{+})$$
with
$$\theta_{+}(x,t)=\upsilon_{+}\cdot x-\int_{0}^{t}\big(\dot{\upsilon}(s)\cdot y(s;\sigma(s))+\frac{|\upsilon(s)|^{2}}{2}-\mu(s)\big)ds+\gamma_{+}.$$
Take $\sigma_{+}=\lim_{t\rightarrow+\infty}\sigma$ and notice that
\begin{eqnarray}\label{eq-ex-2}
&&|w(x,t;\sigma(t))-w(x,t;\sigma_{+})|\nonumber\\
&\leq& \big|e^{i\theta(x,t;\sigma(t))}-e^{i\theta_{+}(x,t)}\big|\phi(x-\int_{0}^{t}\upsilon(s)ds-a(t),\mu(t))\nonumber\\
&&\ +\ \big|\phi(x-\int_{0}^{t}\upsilon(s)ds-a(t),\mu(t))-\phi(x-\int_{0}^{t}\upsilon(s)ds-a_{+},\mu(t))\big|\nonumber\\
&&\ \ \ \ \ \ +\ \big|\phi(x-\int_{0}^{t}\upsilon(s)ds-a_{+},\mu(t))-\phi(x-\int_{0}^{t}\upsilon(s)ds-a_{+},\mu_{+})\big|,\nonumber
\end{eqnarray}
it is easy to see $$\|w(x,t;\sigma(t))-w(x,t;\sigma_{+}\|_{H^{1}}\rightarrow0,\ \ t\rightarrow+\infty$$
in $H^{1}$. On the other hand, $R(x,t)$ verifies an nonlinear Schr\"{o}dinger equation
\begin{eqnarray}\label{eq-ex-3}
&&i\partial_{t}R=-\frac{1}{2}\Delta R+V(x,t)R+F\nonumber\\
&&R(x,0)=e^{i\upsilon_{T_{0}}\cdot x+i\gamma_{T_{0}}}r(x-a_{T_{0}},T_{0}):=R_{0},
\end{eqnarray}
with spatially exponentially localized potential $V$ and $F$ the scalar version of the one in (\ref{eq-224}). It has been shown that $R$ satisfies
$$\big\|R\big\|_{L^{2}_{t}W^{1,6}_{x}\cap L^{\infty}_{t}H^{1}_{x}}\lesssim \epsilon^{\alpha}.$$
Then it follows from a standard small data scattering theorem, there exists $u_{+}\in H^{1}$ constructed by
\begin{eqnarray}\label{scattering}
u_{+}=R_{0}-\lim_{t\rightarrow +\infty}\int^{t}_{0}e^{-\frac{i}{2}\Delta}(V(x,\cdot)R(x,\cdot)+F(x,\cdot))ds
\end{eqnarray}
such that
$$\|u-w(x,t;\sigma_{+})-e^{it\Delta}u_{+}\|_{H^{1}}\rightarrow 0,\ \ \ t\rightarrow +\infty.
$$
Thus we finish the proof of Theorem  \ref{thm-1-1}.

\section{The proof of Theorem \ref{thm-2}}\label{sec-5}
\setcounter{equation}{0}
\subsection{Some key lemmas}

We introduce one soliton adiabatic propagators $\mathscr{U}_{2}(t,s)$:
\begin{eqnarray}\label{eq-41}
&&i\mathscr{U}_{2t}(t,s)=\mathscr{H}_{2}(t)\mathscr{U}_{2}(t,s),\ \ \mathscr{U}_{2}(t,t)=I,
\ \ \mathscr{H}_{2}(t)=\mathscr{H}_{0}+\mathcal {V}_{2}(t,\widetilde{\sigma}(t))+E(t)\nonumber\\
&&E(t)=iT_{0}(t)\big[\overline{P}_{c}'(t),\overline{P}_{c}(t)\big]T^{\ast}_{0}(t),\ \ \overline{P}_{c}(t)=T(t)P_{c}(\sigma_{T_{0}})T^{\ast}(t).
\end{eqnarray}
Here $P_{c}(\sigma_{T_{0}})$ defined as in Proposition (\ref{pro-21}) is the projection onto the subspace of the continuous spectrum of $\mathscr{H}_{2}(\sigma_{T_{0}})$ (see (\ref{eq-214}) for definition),
 $\overline{P}_{c}'(t)$ is the derivative of the projector  $\overline{P}_{c}(t)$ with respect to $t$ and the operators $T_{0}(t)$, $T(t)$ are given by (\ref{eq-42-1}).
It is known that (see [Per])
\begin{eqnarray}\label{eq-43}
\mathscr{U}_{2}(t,s)P_{c}(s)=P_{c}(t)\mathscr{U}_{2}(t,s),\ \ \ P_{c}(t)=T_{0}(t)\overline{P}_{c}(t)T_{0}^{\ast}(t)
\end{eqnarray}
and
\begin{eqnarray}\label{eq-44}
\mathcal {V}_{2}(t,\widetilde{\sigma}(t))=T_{0}(t)T(t)\mathcal {V}_{2}(\sigma_{T_{0}})T^{\ast}(t)T_{0}^{\ast}(t).
\end{eqnarray}
Moreover, one has
\begin{eqnarray}\label{eq-49}
\mathscr{U}_{2}(t,s)=T_{0}(t)U_{2}(t,s)T_{0}^{\ast}(s)
\end{eqnarray}
where $U_{2}(t,s)$ is the propagator associated to the equation
\begin{eqnarray}\label{eq-410'}
&&iU_{2t}(t,s)=H_{2}(t)U_{2}(t,s),\ \ \ U_{2}(t,t)=I,
\end{eqnarray}
\begin{eqnarray}\label{eq-410}
 H_{2}(t)&=&\mathscr{H}_{0}+T(t)\mathcal {V}_{2}(\sigma_{T_{0}})T^{\ast}(t)+i\big[\overline{P}_{c}'(t),\overline{P}_{c}(t)\big]\nonumber\\
 &=&T(t)\big(\mathscr{H}_{0}+\mathcal {V}_{2}(\sigma_{T_{0}})\big)T^{\ast}(t)+i\big[\overline{P}_{c}'(t),\overline{P}_{c}(t)\big]\nonumber\\
 &:=&T(t)H_{2}(\sigma_{T_{0}})T^{\ast}(t)+i\big[\overline{P}_{c}'(t),\overline{P}_{c}(t)\big].
\end{eqnarray}
The adiabatic theorem (see \cite{ASY}, \cite{Kato1}, \cite{Per1} for exmple) says that
\begin{eqnarray}\label{eq-410'''}
\overline{P}_{c}(t)U_{2}(t,s)=U_{2}(t,s)\overline{P}_{c}(s).
\end{eqnarray}
\begin{lem}\label{le-41}
Let $\phi(t)$ be the solution of Schr\"{o}dinger equation
\begin{eqnarray}\label{eq-413''}
i\phi_{t}(t)=\mathscr{H}_{2}(t)\phi(t)+G(t)
\end{eqnarray}
with initial data $\phi(0)=f$
and $\epsilon\ll1$.
Then have
\begin{eqnarray}\label{eq-413}
\big\|P_{c}(t)\mathscr{U}_{2}(t)f\big\|_{L^{\infty}_{t}L^{2}_{x}\cap L^{2}_{t}L^{6}_{x}}\lesssim \|f\|_{L^{2}}
\end{eqnarray}
and
\begin{eqnarray}\label{eq-413'}
\Big\|\int_{0}^{t}\mathscr{U}_{2}(t,s)P_{c}(s)G(s)ds\Big\|_{L^{2}_{t}L^{6}_{x}}\lesssim \|G\|_{L^{2}_{t}L^{6/5}_{x}}.
\end{eqnarray}
Moreover,
\begin{eqnarray}\label{eq-414'}
\int_{\mathbb{R}}\big\|\mathscr{U}_{2}(t,s)P_{c}(s)f\big\|_{L^{6,\infty}}dt\lesssim \|f\|_{L^{6/5,1}}
\end{eqnarray}
 uniformly for all $s\in\mathbb{R}$.
\end{lem}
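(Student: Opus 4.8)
\smallskip
\noindent\textbf{Proof strategy.} The plan is to peel off, one conjugation at a time, the $\epsilon$- and $t$-dependence carried by $\mathscr{H}_2(t)$, reducing all three estimates to the corresponding statements for the \emph{fixed}, $\epsilon$-independent linearized matrix operator $\mathscr{H}_2(\sigma_{T_0})=\mathscr{H}_0+\mathcal{V}_2(\sigma_{T_0})$, for which (\ref{eq-413})--(\ref{eq-413'}) are the usual homogeneous/inhomogeneous Strichartz estimates for a linearized NLS Hamiltonian and (\ref{eq-414'}) is Beceanu's weak-type $L^{6/5,1}\to L^{6,\infty}$ bound \cite{Bec3}. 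First, by (\ref{eq-49}) and (\ref{eq-43}) we have $\mathscr{U}_2(t,s)=T_0(t)U_2(t,s)T_0^{\ast}(s)$ and $P_c(t)=T_0(t)\overline{P}_c(t)T_0^{\ast}(t)$; since $T_0(t)=B_{\beta_0(t),y_0(t),\upsilon_{T_0}}$ acts, modulo a unimodular phase, as the pure translation $|(T_0(t)f)(x)|=|f(x-y_0(t))|$, it is an isometry of every $L^q_x$ and every Lorentz space $L^{q,r}_x$ with norm $1$ \emph{independent of} $\upsilon_{T_0}\gtrsim\epsilon$. Hence it suffices to prove (\ref{eq-413})--(\ref{eq-414'}) with $\mathscr{U}_2,P_c$ replaced by $U_2,\overline{P}_c$. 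By (\ref{eq-410}) the generator of $U_2$ is $H_2(t)=T(t)\mathscr{H}_2(\sigma_{T_0})T^{\ast}(t)+i[\overline{P}_c'(t),\overline{P}_c(t)]$, and $\mathcal{V}_2(\sigma_{T_0})$ depends only on $\phi(\cdot,\mu_{T_0})$ with $\mu_{T_0}=\mu_0+\mathcal{O}(\epsilon^{8-2\delta_0})$, so $\mathscr{H}_2(\sigma_{T_0})$ is $\epsilon$-independent up to a negligible perturbation and satisfies the spectral assumptions uniformly for $\epsilon\ll1$; from here on all implied constants are automatically $\epsilon$-uniform.

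For the time-independent $\mathscr{H}_2(\sigma_{T_0})$, Proposition~\ref{pro-21} supplies the spectral splitting, the continuous projection $P_c(\sigma_{T_0})$, the commutation $\mathscr{H}_2(\sigma_{T_0})P_c(\sigma_{T_0})=P_c(\sigma_{T_0})\mathscr{H}_2(\sigma_{T_0})$, and the linear-stability bound $\sup_t\|e^{it\mathscr{H}_2(\sigma_{T_0})}P_c(\sigma_{T_0})\|<\infty$ --- the last playing the role of unitarity. Since $\mathcal{V}_2(\sigma_{T_0})$ is smooth and exponentially decaying and the spectral assumptions hold, the scheme $\text{decay}\Rightarrow\text{Kato--Jensen}\Rightarrow\text{local decay}$ of \cite{RSS1,DSY} gives $\|\langle x\rangle^{-\nu}e^{it\mathscr{H}_2(\sigma_{T_0})}P_c(\sigma_{T_0})\langle x\rangle^{-\nu}\|\lesssim\langle t\rangle^{-3/2}$ for $\nu>3/2$, whence the full set of homogeneous and inhomogeneous Strichartz estimates --- \emph{including the endpoint pair $(2,6)$} --- follows by the $TT^{\ast}$/Keel--Tao argument; the estimate $\int_{\mathbb{R}}\|e^{it\mathscr{H}_2(\sigma_{T_0})}P_c(\sigma_{T_0})f\|_{L^{6,\infty}}\,dt\lesssim\|f\|_{L^{6/5,1}}$ is Beceanu's theorem \cite{Bec3} for this matrix Schr\"odinger operator.

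It remains to transfer these from $e^{it\mathscr{H}_2(\sigma_{T_0})}$ to $U_2(t,s)$. The adiabatic theorem (\ref{eq-410'''}) gives $\overline{P}_c(t)U_2(t,s)=U_2(t,s)\overline{P}_c(s)$, so $U_2$ leaves the continuous subspace invariant and we only need the restriction. Conjugating, $Q(t,s):=T^{\ast}(t)U_2(t,s)T(s)$ maps $\mathrm{Ran}\,P_c(\sigma_{T_0})$ into itself, and a direct computation --- using $\dot T(t)T^{\ast}(t)=i\dot\beta(t)\varrho-\dot y(t)\cdot\nabla=:A(t)$, $T^{\ast}(t)A(t)T(t)=A(t)$, and $P_c[[\,\cdot\,,P_c],P_c]P_c=0$ (which kills the double commutator descending from $E(t)$) --- shows that on $\mathrm{Ran}\,P_c(\sigma_{T_0})$ one has
\[
i\partial_t Q=\Bigl(\mathscr{H}_2(\sigma_{T_0})+\dot\beta(t)\,P_c\varrho P_c+i\,\dot y(t)\cdot P_c\nabla P_c\Bigr)Q ,
\]
with $\dot y(t)=\upsilon(t)-\upsilon_{T_0}$ and $\dot\beta(t)=-\tfrac12|\upsilon(t)-\upsilon_{T_0}|^2+\mu(t)-\mu_{T_0}$, both $\mathcal{O}(|\dot\sigma|)$. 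Splitting off the asymptotic drift $\nu=\upsilon_+-\upsilon_{T_0}$ into one more Galilean-boost factor $B_{0,0,\nu}$ (again an $L^q$-isometry) and writing $P_c\nabla P_c=\nabla+(\text{finite-rank smoothing})$, one reduces to a residual perturbation of $\mathscr{H}_2(\sigma_{T_0})$ with coefficients $\mathcal{O}(|\dot\sigma|)+o(1)$; feeding it through the inhomogeneous estimates of the previous step, running the bound simultaneously at the $H^1$ level as in Proposition~\ref{pro-31} (and noting $[\nabla,\dot y\cdot\nabla]=0$, so no second derivative arises), and closing by a continuity/bootstrap argument with $\|\dot\sigma\|_{L^1_t}\le\epsilon^{2\alpha}\ll1$ from (\ref{eq-221}), we obtain (\ref{eq-413})--(\ref{eq-414'}) for $U_2$, hence for $\mathscr{U}_2$.

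The main obstacle is this last step: conjugating away the moving frame $T(t)$ trades the drifting soliton potential for the non-smoothing Galilean term $i\,\dot y\cdot\nabla$, which is only \emph{pointwise} small and whose limit $\nu=\upsilon_+-\upsilon_{T_0}$ is generically nonzero, so it cannot be treated as an absolutely time-integrable perturbation outright. The resolution is the package used above --- restrict to $\mathrm{Ran}\,\overline{P}_c$ so the $E(t)$-commutator disappears, remove the constant drift $\nu$ by an isometric boost, handle the remaining $t$-decaying correction perturbatively at the $H^1$ level, and bootstrap with the smallness $\|\dot\sigma\|_{L^1_t}\lesssim\epsilon^{2\alpha}$ --- together with careful $\epsilon$-uniform bookkeeping throughout.
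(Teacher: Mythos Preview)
Your reduction via $T_0(t)$ is the same as the paper's, but from that point on the two arguments diverge, and your route runs into a genuine obstruction.

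The paper does \emph{not} conjugate away $T(t)$. It works directly with $\psi(t)=\overline{P}_c(t)U_2(t)f$, which solves
\[
i\psi_t=\bigl(\mathscr{H}_0+T(t)\mathcal{V}_2(\sigma_{T_0})T^{\ast}(t)\bigr)\psi+i\bigl[\overline{P}_c'(t),\overline{P}_c(t)\bigr]\psi,
\]
and applies Beceanu's theorem \cite[Theorem~1.7]{Bec3} \emph{for time-dependent (moving) potentials} to the operator $\mathscr{H}_0+T(t)\mathcal{V}_2(\sigma_{T_0})T^{\ast}(t)$ as a black box. The only leftover is the adiabatic correction $i[\overline{P}_c',\overline{P}_c]$; since $\overline{P}_b'(t)=T(t)[A(t),P_b(\sigma_{T_0})]T^{\ast}(t)$ with $A(t)=i\beta'(t)\varrho-y'(t)\!\cdot\!\nabla$, the gradient only enters through $[\nabla,P_b(\sigma_{T_0})]$, which is finite rank with exponentially localized kernel. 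Hence $[\overline{P}_c',\overline{P}_c]$ is itself finite rank and localized, with operator coefficient bounded by $|\beta'|+|y'|\lesssim\|\dot\sigma\|_{L^1}\le\epsilon^{2\alpha}$, so
\[
\bigl\|[\overline{P}_c',\overline{P}_c]\psi\bigr\|_{L^2_tL^{6/5}_x}\lesssim\epsilon^{2\alpha}\|\psi\|_{L^2_tL^6_x}
\]
and the loop closes immediately. The $L^{6/5,1}\!\to\!L^{6,\infty}$ bound is handled the same way.

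Your conjugation $Q=T^{\ast}U_2T$ instead produces the generator $\mathscr{H}_2(\sigma_{T_0})-iA(t)+i[[A,P_c],P_c]$, and on $\mathrm{Ran}\,P_c$ the double commutator indeed dies, but the raw term $-iA(t)$ does not: you are left with $i\,y'(t)\!\cdot\!\nabla$ where $y'(t)=\upsilon(t)-\upsilon_{T_0}$. This coefficient satisfies $|y'(t)|\le\|\dot\upsilon\|_{L^1}\le\epsilon^{2\alpha}$ \emph{uniformly in $t$}, but it is not in $L^1_t$ (nor even $L^2_t$) under the sole hypothesis $\dot\sigma\in L^1$; and $\nabla$ provides no spatial localization. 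Consequently $i\,y'\!\cdot\!\nabla\psi$ cannot be placed in any dual Strichartz space $L^{\tilde p'}_tL^{\tilde q'}_x$: in $L^2_tL^{6/5}_x$ you would need $\nabla\psi\in L^2_tL^{6/5}_x$, in $L^1_tL^2_x$ you would need $\nabla\psi\in L^1_tL^2_x$, neither of which is available. Working at the $H^1$ level does not help, since the obstruction is the time integrability of the coefficient, not the spatial regularity of $\psi$. Your proposed fix of boosting away the asymptotic drift $\nu=\upsilon_+-\upsilon_{T_0}$ does not resolve this either: a spatial translation by $\nu t$ removes the constant part of $iy'\!\cdot\!\nabla$ but turns $\mathcal{V}_2(\sigma_{T_0})$ into the moving potential $\mathcal{V}_2(\sigma_{T_0})(\cdot+\nu t)$, putting you back in the time-dependent setting you were trying to avoid; and the residual drift $\upsilon(t)-\upsilon_+$ is $o(1)$ but still not $L^1_t$ in general.

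The moral is that the derivative hidden in the moving frame is harmless only as long as it stays inside a commutator with the finite-rank projection $P_b$; the paper's use of \cite{Bec3} for moving potentials is precisely what keeps it there.
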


\begin{proof}Let us first consider (\ref{eq-413}).
Notice that by using (\ref{eq-49}), it suffices to prove that (\ref{eq-413})  hold for $\overline{P}_{c}(t)U_{2}(t)$ with $U_{2}(t,s)$ defined as in (\ref{eq-410'}) and $U_{2}(t)=U_{2}(t,0)$.
To this end, consider the linear equation (\ref{eq-410'}) and  
and denote by $\psi(t)=\overline{P}_{c}(t)U_{2}(t)f\in {\rm Ran}\overline{P}_{c}(t)$, it follows from
\cite[Theorem 1.7]{Bec3} that
\begin{eqnarray}\label{eq-414-6}
\big\|\psi(t)\big\|_{L^{\infty}_{t}L^{2}_{x}\cap L^{2}_{t}L^{6}_{x}}
&\lesssim&\|f\|_{L^{2}}+\big\|\big[\overline{P}_{c}'(t),\overline{P}_{c}(t)\big]\psi(t)\big\|_{L^{2}_{t}L^{6/5}_{x}}+\|\overline{P}_{b}(t)\psi(t)\|_{L^{2}_{t}L^{6}_{x}}
\end{eqnarray}
The last terms in (\ref{eq-414-6}) disappears naturally.
On the other hand,
\begin{eqnarray}\label{eq-414-1}
\big[\overline{P}_{c}'(t),\overline{P}_{c}(t)\big]=\big[\overline{P}_{b}'(t),\overline{P}_{b}(t)\big],
\end{eqnarray}
with $\overline{P}_{b}(t)=T(t)P_{b}(\sigma_{T_{0}})T^{\ast}(t)$ and
\begin{eqnarray}\label{eq-414-2}
T'(t)=T(t)\big(i\beta'(t)-y'(t)\nabla\big).\nonumber
\end{eqnarray}
As a consequence, we have
\begin{eqnarray}\label{eq-414-4}
\big[\overline{P}_{b}'(t),\overline{P}_{b}(t)\big]=T(t)\big[[i\beta'(t)-y'(t)\nabla,P_{b}(\sigma_{T_{0}})],P_{b}(\sigma_{T_{0}})\big]T^{\ast}(t)\nonumber
\end{eqnarray}
and there exist some localized functions $\Phi_{j}$ and $\Psi_{k}$ centered at $b(t)$ such that
\begin{eqnarray}\label{eq-414-5}
\big[\overline{P}_{b}'(t),\overline{P}_{b}(t)\big]f=\big(\beta'(t)-y'(t)\big)\sum_{j,k}\Phi_{j}\big\langle\Psi_{k},f\big\rangle,
\end{eqnarray}
where
\begin{eqnarray}\label{eq-458}
b(t)=y_{0}(t)+y(t),
\end{eqnarray}
and the sum is finite. Moreover,
\begin{eqnarray}\label{eq-414-3}
\|\beta'(t)\|_{L^{\infty}_{t}}+\|y'(t)\|_{L^{\infty}_{t}}\lesssim \|\dot{\sigma}(t)\|_{L^{1}_{t}}\leq \epsilon^{2\alpha}.
\end{eqnarray}
Thus,
\begin{eqnarray}
\big\|\big[\overline{P}_{c}'(t),\overline{P}_{c}(t)\big]\psi(t)\big\|_{L^{2}_{t}L^{6/5}_{x}}\lesssim \epsilon^{2\alpha}\|\psi(t,s)\|_{L^{2}_{t}L^{6}_{x}}\leq \frac{1}{2}\|\psi(t,s)\|_{L^{2}_{t}L^{6}_{x}}\nonumber
\end{eqnarray}
which combining (\ref{eq-414-6}) lead to desire estimates.

As far as (\ref{eq-413'}). Notice that $$\int_{0}^{t}\mathscr{U}_{2}(t,s)P_{c}(s)G(s)ds\in {\rm Ran}P_{c}(t),$$ applying \cite[Theorem 1.7]{Bec3} for equation (\ref{eq-413''})  with initial data $f=0$ and the same argument as above we obtain
 (\ref{eq-413'}).

Concerning (\ref{eq-414'}), we only need to prove it  for $U_{2}(t,s)\overline{P}_{c}(s)$.  Let  $\psi(t)=\overline{P}_{c}(t)U_{2}(t)f\in {\rm Ran}\overline{P}_{c}(t)$ be defined as before, it follows from
\cite[Theorem 1.7]{Bec3} that
\begin{eqnarray}\label{eq-414-7}
\|\psi(t)\|_{L^{1}_{t}L^{6,\infty}_{x}}
&\lesssim& \|f\|_{L^{6/5,1}}+\big\|\big[\overline{P}_{c}'(t),\overline{P}_{c}(t)\big]\psi(t)\big\|_{L^{1}_{t}L^{6/5,1}_{x}}+\|\overline{P}_{b}(t)\psi(t)\|_{L^{1}_{t}L^{6,\infty}_{x}}.\nonumber
\end{eqnarray}
The last term disappears and
\begin{eqnarray}
\big\|\big[\overline{P}_{c}'(t),\overline{P}_{c}(t)\big]\psi(t)\big\|_{L^{1}_{t}L^{6/5,1}_{x}}\lesssim \epsilon^{2\alpha}\|\psi(t)\|_{L^{1}_{t}L^{6,\infty}_{x}}\leq \frac{1}{2}\|\psi(t)\|_{L^{1}_{t}L^{6/5,1}_{x}},\nonumber
\end{eqnarray}
which imply that
\begin{eqnarray}\label{eq-414-7-1}
\big\|\overline{P}_{c}(t)U_{2}(t)f\big\|_{L^{1}_{t}L^{6,\infty}_{x}}
&\lesssim& \|f\|_{L^{6/5,1}}.
\end{eqnarray}
We now prove (\ref{eq-414'}) uniformly in $s\in \mathbb{R}$. Consider the linear equation (\ref{eq-410'}) and denote by $\psi(t,s)=\overline{P}_{c}(t)U_{2}(t,s)f$, it follow from  Duhamel's formula and the commutative property (\ref{eq-410'''}) that
\begin{eqnarray}\label{eq-414-7-2}
\big\|\psi(t,s)\big\|_{L^{1}_{t}L^{6,\infty}_{x}}&\leq& \big\|\mathscr{U}_{0}(t-s)f\big\|_{L^{1}_{t}L^{6,\infty}_{x}}+ \big\|\int^{t}_{s}\mathscr{U}_{0}(t-\tau)T(\tau)\mathcal {V}_{2}(\sigma_{T_{0}})T^{\ast}(\tau)\psi(\tau)d\tau\big\|_{L^{1}_{t}L^{6,\infty}_{x}}\nonumber\\
&&\ \ \ \ \ \ +\ \big\|\int^{t}_{s}\mathscr{U}_{0}(t-\tau)[\overline{P}_{c}'(\tau),\overline{P}_{c}(\tau)\big]\psi(\tau)d\tau\big\|_{L^{1}_{t}L^{6,\infty}_{x}}\nonumber\\
&\lesssim&\|f\|_{L^{6/5,1}}.
\end{eqnarray}
Here $\mathscr{U}_{0}(t)$ is the linear propagator of $\mathscr{H}_{0}$ and in the second inequality, we use the fact that $\big\|\mathscr{U}_{0}(t)\big\|_{L^{6/5,1}_{x}\rightarrow L^{6,\infty}_{x}}\in L^{1}$ (see \cite[Proposition 1.2]{Bec3}),
(\ref{eq-414-7-1}) and Young's inequality.

\end{proof}

\begin{lem}\label{le-42}
Let $\mathscr{U}_{1}(t)$ be the linear flow of  Schr\"{o}dinger equation
\begin{eqnarray}\label{eq-416}
&&i\psi_{t}(t)=\big(\mathscr{H}_{0}+\mathcal {V}_{1\epsilon}\big)\psi(t)+G(t)\nonumber
\end{eqnarray}
with initial data $\psi(0)=f$. Then we have for admissible pairs $(p,q)$ and $(\tilde{p},\tilde{q})$,
\begin{eqnarray}\label{eq-414}
\big\|\mathscr{U}_{1}(t)f\big\|_{L^{p}_{t}L^{q}_{x}}\lesssim \|f\|_{L^{2}}\ {\rm and}\ \Big\|\int_{0}^{t}\mathscr{U}_{1}(t,s)G(s)ds\Big\|_{L^{p}_{t}L^{q}_{x}}\lesssim \|G\|_{L^{\tilde{p}'}_{t}L^{\tilde{q}'}_{x}}.
\end{eqnarray}
Moreover,
\begin{eqnarray}\label{eq-415}
\int_{\mathbb{R}}\big\|\mathscr{U}_{1}(t,s)f\big\|_{L^{6,\infty}}dt\lesssim \|f\|_{L^{6/5,1}}
\end{eqnarray}
uniformly for all $s\in\mathbb{R}$.
\end{lem}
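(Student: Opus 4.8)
The plan is to reduce the matrix flow $\mathscr{U}_{1}(t)$ to a single scalar Schr\"odinger propagator carrying the \emph{fixed} bump potential $V$, and to extract the uniformity in $\epsilon$ from the parabolic rescaling $x\mapsto\epsilon x$, $t\mapsto\epsilon^{2}t$. First I would note that $\mathscr{H}_{0}+\mathcal{V}_{1\epsilon}$ is block diagonal: writing $H_{\epsilon}:=-\tfrac12\Delta+V_{\epsilon}$ with $V_{\epsilon}(x)=\epsilon^{2}V(\epsilon x)$, one has $\mathscr{H}_{0}+\mathcal{V}_{1\epsilon}=\mathrm{diag}(H_{\epsilon},-H_{\epsilon})$, so $\mathscr{U}_{1}(t)=\mathrm{diag}(e^{-itH_{\epsilon}},e^{itH_{\epsilon}})$ and the two entries are complex conjugates of one another. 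Hence it is enough to prove the Strichartz estimates (the endpoint $(2,6)$ included) and the integrated bound $\int_{\R}\|e^{-itH_{\epsilon}}f\|_{L^{6,\infty}}\,dt\lesssim\|f\|_{L^{6/5,1}}$ for the scalar flow $e^{-itH_{\epsilon}}$ with $\epsilon$-independent constants; since $H_{\epsilon}$ is time independent, $\mathscr{U}_{1}(t,s)=\mathscr{U}_{1}(t-s)$ and uniformity in $s$ is automatic.

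Next I would rescale. If $u$ solves $i\partial_{t}u=-\tfrac12\Delta u+V_{\epsilon}u+G$ on $\R^{3}$, then $\widetilde u(y,s):=u(y/\epsilon,s/\epsilon^{2})$ solves $i\partial_{s}\widetilde u=-\tfrac12\Delta_{y}\widetilde u+V\widetilde u+\widetilde G$ with $\widetilde G(y,s)=\epsilon^{-2}G(y/\epsilon,s/\epsilon^{2})$, i.e.\ $\widetilde u$ is governed by the $\epsilon=1$ operator $H_{1}=-\tfrac12\Delta+V$. Using the dilation law $\|g(\epsilon\,\cdot)\|_{L^{r,\rho}(\R^{3})}=\epsilon^{-3/r}\|g\|_{L^{r,\rho}(\R^{3})}$ together with $s=\epsilon^{2}t$, I would record the identities
\[\|u\|_{L^{p}_{t}L^{q}_{x}}=\epsilon^{-3/q-2/p}\|\widetilde u\|_{L^{p}_{s}L^{q}_{y}},\qquad \|G\|_{L^{\tilde p'}_{t}L^{\tilde q'}_{x}}=\epsilon^{\,2-3/\tilde q'-2/\tilde p'}\|\widetilde G\|_{L^{\tilde p'}_{s}L^{\tilde q'}_{y}},\]
\[\|f\|_{L^{2}_{x}}=\epsilon^{-3/2}\|\widetilde f\|_{L^{2}_{y}},\qquad \|f\|_{L^{6/5,1}_{x}}=\epsilon^{-5/2}\|\widetilde f\|_{L^{6/5,1}_{y}},\qquad \int_{\R}\|u(t)\|_{L^{6,\infty}_{x}}\,dt=\epsilon^{-5/2}\int_{\R}\|\widetilde u(s)\|_{L^{6,\infty}_{y}}\,ds,\]
where $\widetilde f(y)=\widetilde u(y,0)$.

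Then I would invoke the $\epsilon=1$ theory. Since $V\ge0$ is smooth and supported in $B(0,1)$, the scalar operator $H_{1}$ has no bound states (as already remarked for $\mathscr{H}_{1}$ in Section \ref{sec2-1}) and no zero–energy resonance, so the homogeneous and inhomogeneous Strichartz estimates (with the endpoint) together with $\int_{\R}\|e^{-isH_{1}}f\|_{L^{6,\infty}}\,ds\lesssim\|f\|_{L^{6/5,1}}$ hold for $H_{1}$; this is exactly the input quoted from \cite[Theorem 1.7, Proposition 1.2]{Bec3} in the proof of Lemma \ref{le-41}. Feeding the displayed identities into these estimates, the leftover powers of $\epsilon$ are $\epsilon^{\frac32-\frac3q-\frac2p}$ in the homogeneous case, $\epsilon^{(3/\tilde q'+2/\tilde p'-2)-(3/q+2/p)}$ in the inhomogeneous case, and $\epsilon^{\frac{2n}{3}-2}$ with $n=3$ for the integrated $L^{6,\infty}$ bound, and each of these exponents vanishes for admissible pairs in dimension $3$; this gives (\ref{eq-414})–(\ref{eq-415}) with constants independent of $\epsilon$. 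The only genuinely nontrivial ingredient is the sharp $\epsilon=1$ theory for $-\tfrac12\Delta+V$ — the endpoint Strichartz inequality and the Kato–type $L^{6/5,1}\to L^{6,\infty}$ dispersive bound — which I would borrow wholesale from \cite{Bec3}; the rescaling step only records that no power of $\epsilon$ survives, which is precisely why the bounds remain uniform although $V_{\epsilon}$ degenerates in $L^{p}$ for every $p>3/2$ as $\epsilon\to0$. The main obstacle, such as it is, is organizational: one must check that the scaling exponents cancel in $n=3$ and that \cite{Bec3} indeed covers the endpoint and the Lorentz-space estimate for this $H_{1}$.
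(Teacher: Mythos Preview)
Your argument is correct and follows essentially the same route as the paper: reduce to the scalar flow, rescale $x\mapsto\epsilon x$, $t\mapsto\epsilon^{2}t$ to land on the fixed potential $V$, invoke the $\epsilon=1$ estimates from \cite{Bec3}, and check that the admissibility relation $\tfrac2p+\tfrac3q=\tfrac32$ makes all leftover powers of $\epsilon$ cancel. Your observation that $\mathscr{H}_{0}+\mathcal{V}_{1\epsilon}$ is time-independent, so that $\mathscr{U}_{1}(t,s)=\mathscr{U}_{1}(t-s)$ and uniformity in $s$ is immediate, is a clean shortcut compared to the paper, which instead refers back to the Duhamel argument of Lemma~\ref{le-41} for this point.
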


\begin{proof}
It suffices to prove this lemma for $s=0$ and the proof would be done by using scaling since $V_{\epsilon}(\cdot)=\epsilon^{2}V(\epsilon\cdot)$. Specifically, it is easy to see that $\psi_{\epsilon}(t)=\psi(\epsilon^{-1}x,\epsilon^{-2}t)$ is the solution of Schr\"{o}dinger equation
\begin{eqnarray}
&&i\partial_{t}\psi_{\epsilon}(t)=\big(\mathscr{H}_{0}+\mathcal {V}_{1}\big)\psi_{\epsilon}(t)+G_{\epsilon}(t),\nonumber\\
&&\psi_{\epsilon}(s)=f(\epsilon^{-1}\cdot):=f_{\epsilon}.\nonumber
\end{eqnarray}
Notice that for admissible pairs $(p,q)$, it follows from \cite[Theorem 1.3]{Bec3}
\begin{eqnarray}
\|\psi_{\epsilon}\|_{L^{p}_{t}L^{q}_{x}}\lesssim \|f_{\epsilon}\|_{L^{2}}+\|G_{\epsilon}\|_{L^{\tilde{p}'}_{t}L^{\tilde{q}'}_{x}},\nonumber
\end{eqnarray}
which imply the desire Strichartz estimates (\ref{eq-414}). Furthermore, we have (see \cite[Theorem 1.3]{Bec3})
$$\int_{\mathbb{R}}\big\|\psi_{\epsilon}\big\|_{L^{6,\infty}}dt\lesssim \|f_{\epsilon}\|_{L^{6/5,1}}+\|G_{\epsilon}\|_{L^{1}_{t}L^{\frac{6}{5},1}_{x}},$$
which combing the same argument as the one in the proof of Lemma \ref{le-41} lead to  (\ref{eq-415}).
\end{proof}

\subsection{The completion of proof }

 In order to finish the proof of Theorem \ref{thm-2}, we divide the proof into several steps as follows:
\begin{proof}
\emph{Step I. Reduce to local decay estimates.} Let $(p,q)$ and $(\tilde{p},\tilde{q})$ be admissible pairs. Assume that
\begin{eqnarray}\label{eq-303'}
\big\|\mathcal {V}^{1/2}_{1\epsilon}Z(t)\big\|^{2}_{L^{2}_{t}L^{2}_{x}}+\big\|\mathcal {V}^{1/2}_{2}(t)Z(t)\big\|_{L^{2}_{t}L^{2}_{x}}\lesssim \|Z_{0}\|_{L^{2}_{x}}+\|F\|_{L^{\tilde{p}'}_{t}L^{\tilde{q}'}_{x}}+B.
\end{eqnarray}

We write equation (\ref{eq-302}) as
\begin{eqnarray}
i\partial_{t}Z(t)=\mathscr{H}_{0}Z(t)+\mathcal{V}_{2}(t)Z(t)+\mathcal {V}_{1\epsilon}Z(t)+F(t),\nonumber
\end{eqnarray}
the Duhamel formula leads to
\begin{eqnarray}\label{eq-20}
\|Z(t)\|_{L_{t}^{p}L_{x}^{q}}&\leq& \big\|\mathscr{U}_{0}(t)Z_{0}\big\|_{L_{t}^{p}L_{x}^{q}}+\Big\|\int_{0}^{t}\mathscr{U}_{0}(t,s)F(s)ds\Big\|_{L_{t}^{p}L_{x}^{q}}\nonumber\\
&&\ \ \ \ \ \ \ \ \ \ \ \ \ \ \ \ \ +\
\Big\|\int_{0}^{t}\mathscr{U}_{0}(t,s)\big(\mathcal{V}_{1\epsilon}+\mathcal{V}_{2}(t)\big)Z(s)ds\Big\|_{L_{t}^{q}L_{x}^{q}}\nonumber\\
&\lesssim& \big\|Z_{0}\big\|_{L^{2}}+\big\|F(t)\big\|_{L^{\tilde{p}'}_{t}L^{\tilde{q}'}_{x}}+\big\|(\mathcal{V}_{1\epsilon}+\mathcal{V}_{2}(t))Z(t)\big\|_{L_{t}^{2}L_{x}^{\frac{6}{5}}}\nonumber\\
&\lesssim& \big\|Z_{0}\big\|_{L^{2}}+\big\|F(t)\big\|_{L^{\tilde{p}'}_{t}L^{\tilde{q}'}_{x}}+B,\nonumber
\end{eqnarray}
where in the last inequality we  the fact (follows from (\ref{eq-303'})) that
\begin{eqnarray}
&&\big\|(\mathcal{V}_{1}+\mathcal{V}_{2}(t))Z(t)\big\|_{L_{t}^{2}L_{x}^{\frac{6}{5}}}\nonumber\\
&\lesssim&  \big\|\mathcal {V}^{1/2}_{1\epsilon}\big\|_{L^{3}_{x}}\big\|\mathcal {V}^{1/2}_{1\epsilon}Z(t)\big\|^{2}_{L^{2}_{t}L^{2}_{x}}+\big\|\mathcal {V}^{1/2}_{2}(t)\big\|_{L^{\infty}_{t}L^{3}_{x}}\big\|\mathcal {V}^{1/2}_{2}(t)Z(t)\big\|_{L^{2}_{t}L^{2}_{x}}\nonumber\\
&\lesssim& \|Z_{0}\|_{L^{2}_{x}}+\|F\|_{L^{\tilde{p}'}_{t}L^{\tilde{q}'}_{x}}+B.\nonumber
\end{eqnarray}

\emph{Step II. Local decay estimates.}
We will prove for  arbitrary compact supported smooth function $\mathcal {V}_{1}$ and  localized function $\mathcal {V}_{2}$, the local decay estimates hold.
Let $\delta>0$ be some fixed small number and $b(t)$ be defined by (\ref{eq-458})
we introduce a partition of unity associated with the sets
\begin{eqnarray}
B_{\delta\epsilon t}(0)=\big\{x:|x|<\delta\epsilon t\big\}, \ \ \ \ \ \ \ B_{\delta\epsilon t}(b(t))=\big\{x:|x-b(t)|<\delta\epsilon t\big\}\nonumber
 \end{eqnarray}
and
\begin{eqnarray}
\mathbb{R}^{3} \setminus\big(B_{\delta\epsilon t}(0)\cup B_{\delta\epsilon t}(b(t))\big).\nonumber
\end{eqnarray}
Let $\chi_{1}(t,x)$ be a cut-off function such that
\begin{eqnarray}
\chi_{1}(t,x)=1,\ x\in B_{\delta\epsilon t}(0) \ \ \ \text{and}\ \ \ \chi_{1}(t,x)=0,\ x\in \mathbb{R}^{3} \setminus B_{2\delta\epsilon t}(0)\nonumber
\end{eqnarray}
and define
\begin{eqnarray}
\chi_{2}(t,x)=\chi_{1}(t,x-b(t)),\ \ \chi_{3}(t,x)=1-\chi_{1}(t,x)-\chi_{2}(t,x).\nonumber
\end{eqnarray}
Observe that the supports of $\chi_{2}(t,\cdot)$ and $\mathcal{V}_{1\epsilon}(\cdot)$ are disjoint and $\mathcal{V}_{2}(t,\widetilde{\sigma}(t))\chi_{1}(t,\cdot)$ is arbitrary small since the separation condition (\ref{eq-211}) holds, which will be used in the further.

It follows the decomposition of the solution $Z(t)$:
\begin{eqnarray}
Z(t)=\chi_{1}(t,\cdot)Z(t)+\chi_{2}(t,\cdot)P_{b}(t)Z(t)+\chi_{2}(t,\cdot)P_{c}(t)Z(t)+\chi_{3}(t,\cdot)Z(t).\nonumber
\end{eqnarray}
Thus it suffices to estimate the $L^{2}_{t}L^{2}_{x}$ norm of
\begin{eqnarray}\label{eq-47}
&&\mathcal {V}^{1/2}_{2}(t,\widetilde{\sigma}(t))\chi_{1}(t,\cdot)Z(t),\ \mathcal {V}^{1/2}_{2}(t,\widetilde{\sigma}(t))\chi_{2}(t,\cdot)P_{b}(t)Z(t),\nonumber\\
&& \mathcal {V}^{1/2}_{2}(t,\widetilde{\sigma}(t))\chi_{2}(t,\cdot)P_{c}(t)Z(t),\ \mathcal {V}^{1/2}_{2}(t,\widetilde{\sigma}(t))\chi_{3}(t,\cdot)Z(t)
\end{eqnarray}
and
\begin{eqnarray}\label{eq-48}
&&\mathcal {V}^{1/2}_{1\epsilon}\chi_{1}(t,\cdot)Z(t),\ \mathcal {V}^{1/2}_{1\epsilon}\chi_{2}(t,\cdot)P_{b}(t)Z(t),\nonumber\\
 &&\mathcal {V}^{1/2}_{1\epsilon}\chi_{2}(t,\cdot)P_{c}(t)Z(t),\ \mathcal {V}^{1/2}_{1\epsilon}\chi_{3}(t,\cdot)Z(t).
\end{eqnarray}
Notice that
\begin{eqnarray}\label{eq-46}
&&\big\|\mathcal {V}^{1/2}_{2}(t,\widetilde{\sigma}(t))\chi_{2}(t,\cdot)P_{b}(t)Z(t)\big\|_{L^{2}_{t}L^{2}_{x}}+\big\|\mathcal {V}^{1/2}_{1\epsilon}\chi_{2}(t,\cdot)P_{b}(t)Z(t)\big\|_{L^{2}_{t}L^{2}_{x}}\nonumber\\
&\leq&\big\|\mathcal {V}^{1/2}_{2}(t,\widetilde{\sigma}(t))\chi_{2}(t,\cdot)\big\|_{L^{\infty}_{t}L^{3}_{x}}\big\|P_{b}(t)Z(t)\big\|_{L^{2}_{t}L^{6}_{x}}+
\big\|\mathcal {V}^{1/2}_{1\epsilon}\chi_{2}(t,\cdot)\big\|_{L^{\infty}_{t}L^{3}_{x}}\big\|P_{b}(t)Z(t)\big\|_{L^{2}_{t}L^{6}_{x}}\lesssim B,\nonumber
\end{eqnarray}
we only need to estimate the rest of the terms in (\ref{eq-47}) and (\ref{eq-48}). Here and in the following, we will use $\mathcal {V}_{2}(t)$ and $\chi_{i}$ instead of $\mathcal {V}_{2}(t,\widetilde{\sigma}(t))$ and $\chi_{i}(t)$ $(i=1,2,3)$.

Consider the homogeneous equation
\begin{eqnarray}\label{eq-412}
&&i\partial_{t}Z(t)=\mathscr{H}(t,\widetilde{\sigma}(t))Z(t)+F(t)
=\mathscr{H}_{2}(t)Z(t)+\mathcal {V}_{1\epsilon}Z(t)-E(t)Z(t)+F(t)\nonumber\\
&&Z(0)=Z_{0}
\end{eqnarray}
with $\|P_{b}(t)Z(t)\|_{L^{2}_{t}L^{6}_{x}}\lesssim B$ where $P_{b}(t)$ is defined by (\ref{eq-45}). Denote by$\mathscr{U}(t)$  the propagator of the homogeneous equation (\ref{eq-412}) (with $F=0$).

By using Duhamel's formula
\begin{eqnarray}\label{eq-412'}
Z(t)=\mathscr{U}_{0}(t)Z_{0}-i\int^{t}_{0}\mathscr{U}_{0}(t,s)\big(\mathcal {V}_{2}(s)+\mathcal {V}_{1\epsilon}\big)Z(s)ds-i\int^{t}_{0}\mathscr{U}_{0}(t,s)F(s)ds,\nonumber\\
\end{eqnarray}
we obtain
\begin{eqnarray}\label{eq-418}
\big\|\mathcal {V}^{1/2}_{1\epsilon}Z(t)\big\|_{L^{2}_{x}}&\leq& \big\|\mathcal {V}^{1/2}_{1\epsilon}\mathscr{U}_{0}(t)Z_{0}\big\|_{L^{2}_{x}}+\int^{t}_{0}\big\|\mathcal {V}^{1/2}_{1\epsilon}\mathscr{U}_{0}(t,s)\big(\mathcal {V}_{2}(s)+\mathcal {V}_{1\epsilon}\big)Z(s)\big\|_{L^{2}_{x}}ds\nonumber\\
&&\ \ \ \ \ \ \ \ \ \ \ \ \ \ \ \ +\ \int^{t}_{0}\big\|\mathcal {V}^{1/2}_{1\epsilon}\mathscr{U}_{0}(t,s)F(s)\big\|_{L^{2}_{x}}ds\nonumber\\
&\leq&\big\|\mathcal {V}^{1/2}_{1\epsilon}\mathscr{U}_{0}(t)Z_{0}\big\|_{L^{2}_{x}}+\big\|\mathcal {V}^{1/2}_{1\epsilon}\big\|^{2}_{L^{3,2}_{x}}\int^{t}_{0}\big\|\mathscr{U}_{0}(t,s)\big\|_{L^{6/5,1}_{x}\rightarrow L^{6,\infty}_{x}}\big\|\mathcal {V}^{1/2}_{1\epsilon}Z(s)\big\|_{L^{2}_{x}}ds\nonumber\\
&&\ +\ \big\|\mathcal {V}^{1/2}_{1\epsilon}\big\|_{L^{3,2}_{x}}\big\|\mathcal {V}^{1/2}_{2}(t)\big\|_{L^{3,2}_{x}}\int^{t}_{0}\big\|\mathscr{U}_{0}(t,s)\big\|_{L^{6/5,1}_{x}\rightarrow L^{6,\infty}_{x}}\big\|\mathcal {V}^{1/2}_{2}(s)Z(s)\big\|_{L^{2}_{x}}ds\nonumber\\
&&\ \ \ \ \ \ +\ \big\|\mathcal {V}^{1/2}_{1\epsilon}\big\|_{L^{3}_{x}}\big\|\int^{t}_{0}\mathscr{U}_{0}(t,s)F(s)ds\big\|_{L^{6}_{x}}.
\end{eqnarray}
Denote by
\begin{eqnarray}\label{eq-418'}
g(t)=\big\|\mathcal {V}^{1/2}_{1\epsilon}Z(t)\big\|_{L^{2}_{x}}+\big\|\mathcal {V}^{1/2}_{2}(t)Z(t)\big\|_{L^{2}_{x}},
\end{eqnarray}
$$d_{0}(t)=\big\|\mathcal {V}^{1/2}_{1\epsilon}\mathscr{U}_{0}(t)Z_{0}\big\|_{L^{2}_{x}}+\big\|\mathcal {V}^{1/2}_{2}(t)\mathscr{U}_{0}(t)Z_{0}\big\|_{L^{2}_{x}}+\big\|\int^{t}_{0}\mathscr{U}_{0}(t,s)F(s)ds\big\|_{L^{6}_{x}}$$
and
$$h_{0}(t)=\big\|\mathscr{U}_{0}(t)\big\|_{L^{6/5,1}_{x}\rightarrow L^{6,\infty}_{x}},$$
it follows that $d_{0}\in L^{2}_{t}$ with
\begin{eqnarray}
\|d_{0}\|_{L^{2}_{t}}&\lesssim& \big(\big\|\mathcal {V}^{1/2}_{1\epsilon}\big\|_{L^{3}_{x}}+\big\|\mathcal {V}^{1/2}_{2}(t)\big\|_{L^{3}_{x}}\big)\big\|\mathscr{U}_{0}(t)Z_{0}\big\|_{L^{2}_{t}L^{6}_{x}}+\big\|\int^{t}_{0}\mathscr{U}_{0}(t,s)F(s)ds\big\|_{L^{2}_{t}L^{6}_{x}}\nonumber\\
&\lesssim&\|Z_{0}\|+\|F\|_{L^{\tilde{p}'}_{t}L^{\tilde{q}'}_{x}},\nonumber
\end{eqnarray}
 $h\in L^{1}_{t}$ and
\begin{eqnarray}
g(t)\leq C\Big(d_{0}(t)+\int^{t}_{0}h_{0}(t-s)g(s)ds\Big).\nonumber
\end{eqnarray}
Then for fix large enough $T$, by using Gronwall's inequality we can find a large constant $C(T)$ such that
\begin{eqnarray}\label{eq-419}
\int_{0}^{T}\big\|\mathcal {V}^{1/2}_{1\epsilon}Z(t)\big\|^{2}_{L^{2}_{x}}dt+\int_{0}^{T}\big\|\mathcal {V}^{1/2}_{2}(t)Z(t)\big\|_{L^{2}_{x}}dt\leq C^{2}(T)\big(\|Z_{0}\|^{2}_{L^{2}_{x}}+\big\|F\big\|^{2}_{L^{\tilde{p}'}_{t}L^{\tilde{q}'}_{x}}\big).
\end{eqnarray}
which combing Duhamel's formula (\ref{eq-412'}) further imply that for any localized function $\mathcal {V}$ and $c(t)=0\ {\rm or}\ b(t)$,
\begin{eqnarray}\label{eq-419'}
\int_{0}^{T}\big\|\mathcal {V}(\cdot-c(t))Z(t)\big\|^{2}_{L^{2}_{x}}dt\leq C^{2}(T)\big(\|Z_{0}\|^{2}_{L^{2}_{x}}+\big\|F\big\|^{2}_{L^{\tilde{p}'}_{t}L^{\tilde{q}'}_{x}}\big).
\end{eqnarray}
Next we will show that the constant $C(T)$ in (\ref{eq-419}) can be taken independent of $T$, this could be done by following the bootstrap argument in [RSS] which is based on the observation that it is enough to show if (\ref{eq-419}) holds for $C(T)$, it also hold for $C(T)/2$. Actually, one only need to prove (\ref{eq-419}) for $\int^{T}_{\tilde{M}}$ for some large positive constant $\tilde{M}\ll T$ which is independent of $T$.
We will do it channel by channel and begin with  the estimates for $\mathcal {V}^{1/2}_{2}(t)\chi_{2}P_{c}(t)Z(t)$.

\emph{$II_{1}$. Local decay for $\mathcal {V}^{1/2}_{2}(t)\chi_{2}P_{c}(t)Z(t)$.}
Assume that $A$ is a large constant to be fixed later and $A\ll T$, it follows from Duhamel's formula that
\begin{eqnarray}\label{eq-411}
\mathcal {V}^{1/2}_{2}(t)\chi_{2}P_{c}(t)Z(t)&=&\mathcal {V}^{1/2}_{2}(t)\chi_{2}P_{c}(t)\mathscr{U}_{2}(t)Z_{0}-i\mathcal {V}^{1/2}_{2}(t)\chi_{2}\int^{t-A}_{0}\mathscr{U}_{2}(t,s)P_{c}(s)\mathcal {V}_{1\epsilon}Z(s)ds\nonumber\\
&&\ \ \ \ \ \ \ -\ i\mathcal {V}^{1/2}_{2}(t)\chi_{2}\int^{t}_{t-A}\mathscr{U}_{2}(t,s)P_{c}(s)\mathcal {V}_{1\epsilon}Z(s)ds\nonumber\\
&&\ \ \ \ \ \ \ \ \ \ +\ i\mathcal {V}^{1/2}_{2}(t)\chi_{2}\int^{t}_{0}\mathscr{U}_{2}(t,s)P_{c}(s)E(s)Z(s)ds\nonumber\\
&&\ \ \ \ \ \ \ \ \ \ \ \ \ -\ i\mathcal {V}^{1/2}_{2}(t)\chi_{2}\int^{t}_{0}\mathscr{U}_{2}(t,s)P_{c}(s)F(s)ds
\end{eqnarray}
By using Lemma \ref{le-41}, we have
\begin{eqnarray}\label{eq-417}
&&\big\|\mathcal {V}^{1/2}_{2}(t)\chi_{2}P_{c}(t)\mathscr{U}_{2}(t,0)Z_{0}\big\|_{L^{2}_{t}L^{2}_{x}}\nonumber\\
&\leq&\big\|\mathcal {V}^{1/2}_{2}(t)\chi_{2}\big\|_{L^{\infty}_{t}L^{3}_{x}}\big\|P_{c}(t)\mathscr{U}_{2}(t,0)Z_{0}\big\|_{L^{2}_{t}L^{6}_{x}}\lesssim \|Z_{0}\|_{L^{2}}.
\end{eqnarray}
and
\begin{eqnarray}\label{eq-417'}
&&\big\|\mathcal {V}^{1/2}_{2}(t)\chi_{2}\int^{t}_{0}\mathscr{U}_{2}(t,s)P_{c}(s)F(s)ds\big\|_{L^{2}_{t}L^{2}_{x}}\nonumber\\
&\leq&\big\|\mathcal {V}^{1/2}_{2}(t)\chi_{2}\big\|_{L^{\infty}_{t}L^{3}_{x}}\big\|\int^{t}_{0}\mathscr{U}_{2}(t,s)P_{c}(s)F(s)ds\big\|_{L^{2}_{t}L^{6}_{x}}
\lesssim\|F\big\|_{L^{\tilde{p}'}_{t}L^{\tilde{q}'}_{x}}.
\end{eqnarray}
Now let us give the formula for $E(t)$,
\begin{eqnarray}\label{eq-420}
E(t)=iT_{0}(t)\big[\overline{P}_{c}'(t),\overline{P}_{c}(t)\big]T^{\ast}_{0}(t)=iT_{0}(t)\big[\overline{P}_{b}'(t),\overline{P}_{b}(t)\big]T^{\ast}_{0}(t),
\end{eqnarray}
with $\overline{P}_{b}(t)=T(t)P_{b}(\sigma_{T_{0}})T^{\ast}(t)$ and
\begin{eqnarray}\label{eq-421}
T'(t)=T(t)\big(i\beta'(t)-y'(t)\nabla\big).\nonumber
\end{eqnarray}
Then similarly to (\ref{eq-414-3}),
there exist some localized functions $\tilde{\Phi}_{j}$ and $\tilde{\Psi}_{k}$ centered at $b(t)$ such that
\begin{eqnarray}\label{eq-424}
E(t)f=\big(\beta'(t)-y'(t)\big)\sum_{j,k}\tilde{\Phi}_{j}\big\langle\tilde{\Psi}_{k},f\big\rangle,
\end{eqnarray}
where the sum is finite.
Thus it follows Lemma \ref{le-42}, (\ref{eq-414-3}), (\ref{eq-419})  and Shur's Lemma that
\begin{eqnarray}\label{eq-425}
&&\Big\|\chi_{2}\int^{t}_{0}\big\|\mathcal {V}^{1/2}_{2}(t)\mathscr{U}_{2}(t,s)P_{c}(s)E(s)Z(s)\big\|_{L^{2}_{x}}ds\Big\|_{L^{2}_{t}}\nonumber\\
&\leq&\big\|\mathcal {V}^{1/2}_{2}(t)\big\|_{L^{3,2}_{x}}\Big\|\int^{t}_{0}h_{2}(t,s)\|E(s)Z(s)\|_{L_{x}^{6/5,1}}ds\Big\|_{L^{2}_{t}}\nonumber\\
&\lesssim& \big\|\dot{\sigma}\|_{L^{1}_{t}}\Big\|\int^{t}_{0}h_{2}(t,s)\big\|\mathcal{V}Z(s)\big\|_{L^{2}_{x}}ds\Big\|_{L^{2}_{t}}\nonumber\\
&\lesssim&\epsilon^{2\alpha}C(T)\big(\|Z_{0}\|_{L^{2}_{x}}+\big\|F\big\|_{L^{\tilde{p}'}_{t}L^{\tilde{q}'}_{x}})\leq \frac{C(T)}{2}\big(\|Z_{0}\|_{L^{2}_{x}}+\big\|F\big\|_{L^{\tilde{p}'}_{t}L^{\tilde{q}'}_{x}}),
\end{eqnarray}
with  some localized function $\mathcal{V}$ and
\begin{eqnarray}\label{eq-426}
h_{2}(t,s)=\big\|\mathscr{U}_{2}(t,s)P_{c}(s)\big\|_{L^{6/5,1}_{x}\rightarrow L^{6,\infty}_{x}}.
\end{eqnarray}
For the second term in (\ref{eq-411}),
\begin{eqnarray}\label{eq-427}
&&\mathcal {V}^{1/2}_{2}(t)\chi_{2}\int^{t-A}_{0}\mathscr{U}_{2}(t,s)P_{c}(s)\mathcal {V}_{1\epsilon}Z(s)ds\nonumber\\
&=&\mathcal {V}^{1/2}_{2}(t)\chi_{2}\Big(\int^{A}_{0}\mathscr{U}_{2}(t,s)P_{c}(s)\mathcal {V}_{1\epsilon}Z(s)ds+\int^{t-A}_{A}\mathscr{U}_{2}(t,s)P_{c}(s)\mathcal {V}_{1\epsilon}Z(s)ds\Big).
\end{eqnarray}
It follows from Schur's Lemma and the bootstrap assumption (\ref{eq-419}) that
\begin{eqnarray}\label{eq-428}
&&\int_{0}^{T}\big\|\mathcal {V}^{1/2}_{2}(t)\chi_{2}\int^{A}_{0}\mathscr{U}_{2}(t,s)P_{c}(s)\mathcal {V}_{1\epsilon}Z(s)ds\big\|^{2}_{L^{2}_{x}}dt\nonumber\\
&\leq&\int_{0}^{2A}\big\|\mathcal {V}^{1/2}_{2}(t)\chi_{2}\int^{A}_{0}\mathscr{U}_{2}(t,s)P_{c}(s)\mathcal {V}_{1\epsilon}Z(s)ds\big\|^{2}_{L^{2}_{x}}dt\nonumber\\
&&\ \ \ \ \ \ \ +\ \int_{2A}^{T}\big\|\mathcal {V}^{1/2}_{2}(t)\chi_{2}\int^{A}_{0}\mathscr{U}_{2}(t,s)P_{c}(s)\mathcal {V}_{1\epsilon}Z(s)ds\big\|^{2}_{L^{2}_{x}}dt\nonumber\\
&\lesssim& \big\|\mathcal {V}^{1/2}_{2}(t)\chi_{2}\big\|^{2}_{L^{\infty}_{t,x}}\big\|\mathcal {V}^{1/2}_{1\epsilon}\big\|^{2}_{L^{\infty}_{x}}
\int_{0}^{2A}\Big(\int^{A}_{0}\big\|\mathscr{U}_{2}(t,s)P_{c}(s)\big\|_{L^{2}\rightarrow L^{2}}\big\|\mathcal {V}^{1/2}_{1\epsilon}Z(s)\big\|_{L^{2}_{x}}ds\Big)^{2}dt\nonumber\\
&&\ \ \ \ \ \ \ \ +\ \big\|\mathcal {V}^{1/2}_{2}(t)\chi_{2}\big\|^{2}_{L^{\infty}_{t}L^{3}_{x}}\big\|\mathcal {V}^{1/2}_{1\epsilon}\big\|^{2}_{L^{3}_{x}}
\int_{2A}^{T}\Big(\int^{A}_{0}h_{2}(t,s)\big\|\mathcal {V}^{1/2}_{1\epsilon}Z(s)\big\|_{L^{2}_{x}}ds\Big)^{2}dt\nonumber\\
&\lesssim& \big(C_{A}+k_{1}(A)C^{2}(T)\big)\int_{0}^{A}\big\|\mathcal {V}^{1/2}_{1\epsilon}Z(s)\big\|^{2}_{L^{2}_{x}}ds\nonumber\\
&\leq& \frac{C^{2}(T)}{2}\big(\|Z_{0}\|^{2}_{L^{2}_{x}}+\big\|F\big\|^{2}_{L^{\tilde{p}'}_{t}L^{\tilde{q}'}_{x}}\big),\nonumber
\end{eqnarray}
where $C_{A}$ is independent of $T$ and it follows from the proof of (\ref{eq-414-7-2}) that for sufficient large $A$,
\begin{eqnarray}\label{eq-429}
k_{1}(A)=\Big(\sup_{2A<t<T}\int^{A}_{0}h_{2}(t,s)ds\Big)\Big(\sup_{0<s<A}\int^{T}_{2A}h_{2}(t,s)dt\Big)
\end{eqnarray}
is small. Similarly,
\begin{eqnarray}\label{eq-430}
&&\int_{0}^{T}\big\|\mathcal {V}^{1/2}_{2}(t)\chi_{2}\int^{t-A}_{A}\mathscr{U}_{2}(t,s)P_{c}(s)\mathcal {V}_{1\epsilon}Z(s)ds\big\|^{2}_{L^{2}_{x}}dt\nonumber\\
&\lesssim&\big\|\mathcal {V}^{1/2}_{2}(t)\chi_{2}\big\|^{2}_{L^{\infty}_{t}L^{3}_{x}}\big\|\mathcal {V}^{1/2}_{1\epsilon}\big\|^{2}_{L^{3}_{x}}
\int_{0}^{T}\Big(\int^{t-A}_{A}h_{2}(t,s)\big\|\mathcal {V}^{1/2}_{1\epsilon}Z(s)\big\|_{L^{2}_{x}}ds\Big)^{2}dt\nonumber\\
&\lesssim& k_{2}(A)C^{2}(T)\int_{0}^{A}\big\|\mathcal {V}^{1/2}_{1\epsilon}Z(s)\big\|^{2}_{L^{2}_{x}}ds
\leq \frac{1}{2}C^{2}(T)\big(\|Z_{0}\|^{2}_{L^{2}_{x}}+\big\|F\big\|^{2}_{L^{\tilde{p}'}_{t}L^{\tilde{q}'}_{x}}\big)
\end{eqnarray}
where
\begin{eqnarray}\label{eq-430'}
k_{2}(A)=\Big(\sup_{2A<t<T}\int^{t-A}_{A}h_{2}(t,s)ds\Big)\Big(\sup_{A<s<T-A}\int^{T}_{s+A}h_{2}(t,s)dt\Big)
\end{eqnarray}
is also a small constant for large $A$.

For the third term in (\ref{eq-411}),
\begin{eqnarray}\label{eq-431}
&&\mathcal {V}^{1/2}_{2}(t)\chi_{2}\int^{t}_{t-A}\mathscr{U}_{2}(t,s)P_{c}(s)\mathcal {V}_{1\epsilon}Z(s)ds=\mathcal {V}^{1/2}_{2}(t)\chi_{2}\int^{t}_{t-A}\mathscr{U}_{2}(t,s)P_{c}(s)\mathcal {V}_{1\epsilon}\mathscr{U}_{1}(s)Z_{0}ds\nonumber\\
&&\ \ \ \ \ \ \ \ \ \ \ \ +\ \mathcal {V}^{1/2}_{2}(t)\chi_{2}\int^{t}_{t-A}\mathscr{U}_{2}(t,s)P_{c}(s)\mathcal {V}_{1\epsilon}\int^{s}_{0}\mathscr{U}_{1}(s,\tau)\mathcal{V}_{2}(\tau)Z(\tau)d\tau ds\nonumber\\
&&\ \ \ \ \ \ \ \ \ \ \ \ \ \ +\ \mathcal {V}^{1/2}_{2}(t)\chi_{2}\int^{t}_{t-A}\mathscr{U}_{2}(t,s)P_{c}(s)\mathcal {V}_{1\epsilon}\int^{s}_{0}\mathscr{U}_{1}(s,\tau)F(\tau)d\tau ds.
\end{eqnarray}
By using Lemma \ref{le-41} and Young's inequality (Schur's Lemma), we have
\begin{eqnarray}\label{eq-432}
&&\Big\|\mathcal {V}^{1/2}_{2}(t)\chi_{2}\int^{t}_{t-A}\mathscr{U}_{2}(t,s)P_{c}(s)\mathcal {V}_{1\epsilon}\mathscr{U}_{1}(s)Z_{0}ds\Big\|_{L^{2}_{t}L^{2}_{x}}\nonumber\\
&\leq&\big\|\mathcal {V}^{1/2}_{2}(t)\chi_{2}\big\|_{L^{\infty}_{t}L^{3}_{x}}\big\|\mathcal {V}^{1/2}_{1\epsilon}\big\|_{L^{3}_{x}}\Big\|\int^{t}_{t-A}h_{2}(t,s)\big\|\mathcal {V}_{1\epsilon}^{1/2}\mathscr{U}_{1}(s)Z_{0}\big\|_{L^{2}_{x}}ds\Big\|_{L^{2}_{t}}\nonumber\\
&\lesssim& \big\|\mathcal {V}_{1\epsilon}^{1/2}\mathscr{U}_{1}(t)Z_{0}\big\|_{L^{2}_{t}L^{2}_{x}}\lesssim
\big\|\mathscr{U}_{1}(t)Z_{0}\big\|_{L^{2}_{t}L^{6}_{x}}\lesssim \big\|Z_{0}\big\|_{L^{2}_{x}}
\end{eqnarray}
and
\begin{eqnarray}\label{eq-433}
&&\Big\|\mathcal {V}^{1/2}_{2}(t)\chi_{2}\int^{t}_{t-A}\mathscr{U}_{2}(t,s)P_{c}(s)\mathcal {V}_{1\epsilon}\int^{s}_{0}\mathscr{U}_{1}(s,\tau)F(\tau)d\tau ds\Big\|_{L^{2}_{t}L^{2}_{x}}\nonumber\\
&\leq&\big\|\mathcal {V}^{1/2}_{2}(t)\chi_{2}\big\|_{L^{\infty}_{t}L^{3}_{x}}\big\|\mathcal {V}^{1/2}_{1\epsilon}\big\|_{L^{3}_{x}}\Big\|\int^{t}_{t-A}h_{2}(t,s)\big\|\int^{s}_{0}\mathcal {V}_{1\epsilon}^{1/2}\mathscr{U}_{1}(s,\tau)F(\tau)d\tau\big\|_{L^{2}_{x}}ds\Big\|_{L^{2}_{t}}\nonumber\\
&\lesssim&\Big\|\int^{s}_{0}\mathcal {V}_{1\epsilon}^{1/2}\mathscr{U}_{1}(s,\tau)F(\tau)d\tau\Big\|_{L^{2}_{s}L^{2}_{x}}\lesssim
\Big\|\int^{s}_{0}\mathscr{U}_{1}(s,\tau)F(\tau)d\tau\Big\|_{L^{2}_{s}L^{6}_{x}}\lesssim
\big\|F\big\|^{2}_{L^{\tilde{p}'}_{t}L^{\tilde{q}'}_{x}}.
\end{eqnarray}
It remains to estimate
\begin{eqnarray}\label{eq-434}
&&\mathcal {V}^{1/2}_{2}(t)\chi_{2}\int^{t}_{t-A}\mathscr{U}_{2}(t,s)P_{c}(s)\mathcal {V}_{1\epsilon}\int^{s}_{0}\mathscr{U}_{1}(s,\tau)\mathcal{V}_{2}(\tau)Z(\tau)d\tau ds\nonumber\\
&=&\mathcal {V}^{1/2}_{2}(t)\chi_{2}\int^{t}_{t-A}\mathscr{U}_{2}(t,s)P_{c}(s)\mathcal {V}_{1\epsilon}\int^{s-B}_{0}\mathscr{U}_{1}(s,\tau)\mathcal{V}_{2}(\tau)Z(\tau)d\tau ds\nonumber\\
&&\ \ \ \ \ \ \ \ \ \ +\ \mathcal {V}^{1/2}_{2}(t)\chi_{2}\int^{t}_{t-A}\mathscr{U}_{2}(t,s)P_{c}(s)\mathcal {V}_{1\epsilon}\int^{s}_{s-B}\mathscr{U}_{1}(s,\tau)\mathcal{V}_{2}(\tau)Z(\tau)d\tau ds\nonumber\\
&:=&J_{1}+J_{2}.
\end{eqnarray}
Here $B$  which is a large constant independent of $C(T)$ so that $T\gg B\gg A$ will be chosen later. For $J_{1}$, it follows from Young's inequality that
\begin{eqnarray}\label{eq-435}
&&\big\|J_{1}\big\|_{L^{2}_{t}L^{2}_{x}}\nonumber\\
&\leq& \big\|\mathcal {V}^{1/2}_{2}(t)\chi_{2}\big\|_{L^{\infty}_{t}L^{3}_{x}}\big\|\mathcal {V}^{1/2}_{1\epsilon}\big\|_{L^{3}_{x}}\Big\|\int^{t}_{t-A}h_{2}(t,s)\int^{s-B}_{0}\big\|\mathscr{U}_{1}(s,\tau)\mathcal{V}_{2}(\tau)Z(\tau)\big\|_{L^{2}_{x}}d\tau ds\Big\|_{L^{2}_{t}}\nonumber\\
&\lesssim&\Big\|\int^{s-B}_{0}\big\|\mathcal{V}^{1/2}_{1\epsilon}\mathscr{U}_{1}(s,\tau)\mathcal{V}_{2}(\tau)Z(\tau)\big\|_{L^{2}_{x}}d\tau\Big\|_{L^{2}_{t}}\nonumber\\
&\leq&\big\|\mathcal {V}^{1/2}_{2}(t)\chi_{2}\big\|_{L^{\infty}_{t}L^{3}_{x}}\big\|\mathcal {V}^{1/2}_{1\epsilon}\big\|_{L^{3}_{x}}
\Big\|\int^{s-B}_{0}h_{1}(s,\tau)\big\|\mathcal{V}_{2}^{1/2}(\tau)Z(\tau)\big\|_{L^{2}_{x}}d\tau\Big\|_{L^{2}_{t}}\nonumber
\end{eqnarray}
where
\begin{eqnarray}\label{eq-436}
h_{1}(t,s)=\big\|\mathscr{U}_{1}(t,s)\big\|_{L^{6/5,1}_{x}\rightarrow L^{6,\infty}_{x}}.
\end{eqnarray}
And then by using Lemma \ref{le-42} and the same argument as in (\ref{eq-428})-(\ref{eq-430'}), we obtain
\begin{eqnarray}\label{eq-437}
\big\|J_{1}\big\|_{L^{2}_{t}L^{2}_{x}}\leq \frac{1}{2}C(T)\big(\|Z_{0}\|_{L^{2}_{x}}+\big\|F\big\|_{L^{\tilde{p}'}_{t}L^{\tilde{q}'}_{x}}\big).
\end{eqnarray}
As far as $J_{2}$, let $M$ be large positive constant and $M\ll T$,
\begin{eqnarray}\label{eq-438}
J_{2}&=&\mathcal {V}^{1/2}_{2}(t)\chi_{2}\int^{t}_{t-A}\mathscr{U}_{2}(t,s)P_{c}(s)K_{\leq M}\mathcal {V}_{1\epsilon}\int^{s}_{s-B}\mathscr{U}_{1}(s,\tau)\mathcal{V}_{2}(\tau)Z(\tau)d\tau ds\nonumber\\
&&\ \ \ +\ \mathcal {V}^{1/2}_{2}(t)\chi_{2}\int^{t}_{t-A}\mathscr{U}_{2}(t,s)P_{c}(s)K_{> M}\mathcal {V}_{1\epsilon}\int^{s}_{s-B}\mathscr{U}_{1}(s,\tau)\mathcal{V}_{2}(\tau)Z(\tau)d\tau ds\nonumber\\
&=&J_{2}^{L}+J_{2}^{H},
\end{eqnarray}
where $K_{\leq M}:=K(|i\nabla|\leq M)$ and $K_{>M}:=K(|i\nabla|\leq M)$. The estimates for $J_{2}^{L}$ will be accomplished by using the following inequality,
\begin{eqnarray}\label{eq-439}
\sup_{|t-s|\leq A}\big\|\mathcal {V}^{1/2}_{2}(t)\chi_{2}\mathscr{U}_{2}(t,s)P_{c}(s)K_{\leq M}\mathcal {V}_{1\epsilon}^{1/2}\big\|_{L^{2}_{x}\rightarrow L^{2}_{x}}\leq \frac{AM}{\delta t}.
\end{eqnarray}
It is proved by [RSS] for $\epsilon=1$, we will prove that it holds uniformly for $\epsilon$ small later.
Then it follows from  the bootstrap assumption (\ref{eq-419}) and H\"{o}lder's inequality that
\begin{eqnarray}\label{eq-441}
\big\|J_{2}^{L}\big\|_{L^{2}_{t}L^{2}_{x}}&\leq& \Big\|\sup_{|t-s|\leq A}\big\|\mathcal {V}^{1/2}_{2}(t)\chi_{2}\mathscr{U}_{2}(t,s)P_{c}(s)K_{\leq M}\mathcal {V}_{1\epsilon}^{1/2}\big\|_{L^{2}_{x}\rightarrow L^{2}_{x}}\nonumber\\
&&\ \ \ \ \ \ \times \ \int^{t}_{t-A}\big\|\mathcal {V}_{1\epsilon}^{1/2}\int^{s}_{s-B}\mathscr{U}_{1}(s,\tau)\mathcal{V}_{2}(\tau)Z(\tau)d\tau\big\|_{L^{2}_{x}} ds\Big\|_{L^{2}_{t}}\nonumber\\
&\leq&\big\|AM\langle t\rangle^{-1}\big\|_{L^{2}_{t}}\Big\|A^{1/2}\int^{s}_{s-B}\mathcal {V}_{1\epsilon}^{1/2}\mathscr{U}_{1}(s,\tau)\mathcal{V}_{2}(\tau)Z(\tau)d\tau\Big\|_{L^{2}_{s}}\nonumber\\
&\leq& \frac{1}{2}C(T)\big(\|Z_{0}\|_{L^{2}_{x}}+\big\|F\big\|_{L^{\tilde{p}'}_{t}L^{\tilde{q}'}_{x}}\big).
\end{eqnarray}
Here as mentioned before that one only need to  prove (\ref{eq-419}) for $\int^{T}_{\tilde{M}}$ for some large positive constant $\tilde{M}\ll T$, therefore we can choose $t$ large enough such that $A^{3/2}Mt^{-\varepsilon}$ are small for any $\varepsilon>0$, as well as $t^{-(1-\varepsilon)}\in L^{2}_{t}$. On the other hand,
\begin{eqnarray}\label{eq-443}
J_{2}^{H}&=&\mathcal {V}^{1/2}_{2}(t)\chi_{2}\int^{t}_{t-A}\mathscr{U}_{2}(t,s)P_{c}(s)\mathcal {V}_{1\epsilon}^{1/2}K_{> M}\mathcal {V}_{1\epsilon}^{1/2}\int^{s}_{s-B}\mathscr{U}_{1}(s,\tau)\mathcal{V}_{2}(\tau)Z(\tau)d\tau ds\nonumber\\
&&\ +\ \mathcal {V}^{1/2}_{2}(t)\chi_{2}\int^{t}_{t-A}\mathscr{U}_{2}(t,s)P_{c}(s)\big[K_{> M},\mathcal {V}_{1\epsilon}^{1/2}\big]\mathcal {V}_{1\epsilon}^{1/2}\int^{s}_{s-B}\mathscr{U}_{1}(s,\tau)\mathcal{V}_{2}(\tau)Z(\tau)d\tau ds\nonumber\\
&:=&J_{2}^{H,1}+J_{2}^{H,2}.\nonumber
\end{eqnarray}
It follows from the Fubini theorem and H\"{o}lder's inequality that
\begin{eqnarray}\label{eq-444'}
\big\|J_{2}^{H,2}\big\|_{L^{2}_{t}L^{2}_{x}}&\leq&\Big\|\int^{t}_{t-A}\big\|\chi_{2}\mathcal {V}^{1/2}_{2}(t)\big\|_{L^{\infty}}\big\|\mathscr{U}_{2}(t,s)P_{c}(s)\big\|_{L^{2}_{x}\rightarrow L^{2}_{x}}\big\|\big[K_{> M},\mathcal {V}_{1\epsilon}^{1/2}\big]\big\|_{L^{2}_{x}\rightarrow L^{2}_{x}}\nonumber\\
&&\ \ \ \ \ \times\ \big\|\int^{s}_{s-B}\mathcal {V}_{1\epsilon}^{1/2}\mathscr{U}_{1}(s,\tau)\mathcal{V}_{2}(\tau)Z(\tau)d\tau\big\|_{L^{2}_{x}} ds\Big\|_{L^{2}_{t}}\nonumber\\
&\leq&A^{1/2}\epsilon M^{-1}\Big\|\Big(\int^{t}_{t-A}\big\|\int^{s}_{s-B}\mathcal {V}_{1\epsilon}^{1/2}\mathscr{U}_{1}(s,\tau)\mathcal{V}_{2}(\tau)Z(\tau)d\tau\big\|^{2}_{L^{2}_{x}} ds\Big)^{1/2}\Big\|_{L^{2}_{t}}\nonumber\\
&\leq&A\epsilon M^{-1}\Big\|\int^{s}_{s-B}\big\|\mathcal {V}_{1\epsilon}^{1/2}\mathscr{U}_{1}(s,\tau)\mathcal{V}_{2}(\tau)Z(\tau)\big\|_{L^{2}_{x}}d\tau\Big\|_{L^{2}_{s}}\nonumber\\
&\leq&AB\epsilon^{2}M^{-1}\big\|\mathcal{V}_{2}^{1/2}(\tau)Z(\tau)\big\|_{L^{2}_{\tau}L^{2}_{x}}\nonumber\\
&\leq& AB\epsilon^{2}M^{-1}C(T)\big(\|Z_{0}\|_{L^{2}_{x}}+\big\|F\big\|_{L^{\tilde{p}'}_{t}L^{\tilde{q}'}_{x}}\big),
\end{eqnarray}

To deal with $J_{2}^{H,1}$, we claim that for all $T>0$ and $\alpha>0$,
\begin{eqnarray}\label{eq-442}
\int_{\tau}^{T}\int_{\mathbb{R}^{3}}\big|\mathcal {V}_{1\epsilon}^{1/2}\nabla\langle\nabla\rangle^{-1/2}\mathscr{U}_{1}(s,\tau)f\big|^{2}dxds\leq \epsilon^{3} (T-\tau)\big(1+\|\mathcal{V}_{1}\|_{L^{\infty}}\big)\|f\|^{2}_{L^{2}_{x}}.
\end{eqnarray}
Similarly to (\ref{eq-444'}),
\begin{eqnarray}\label{eq-446}
&&\big\|J_{2}^{H,1}\big\|_{L^{2}_{t}L^{2}_{x}}\nonumber\\
&\leq& \Big\|\int^{t}_{t-A}h_{2}(t,s)\int^{s}_{s-B}\big\|K_{> M}\mathcal {V}_{1\epsilon}^{1/2}\mathscr{U}_{1}(s,\tau)\mathcal{V}_{2}(\tau)Z(\tau)\big\|_{L^{2}_{x}}d\tau ds\Big\|_{L^{2}_{t}}\nonumber\\
&\leq&\Big\|\int^{s}_{s-B}\big\|K_{> M}\mathcal {V}_{1\epsilon}^{1/2}\mathscr{U}_{1}(s,\tau)\mathcal{V}_{2}(\tau)Z(\tau)\big\|_{L^{2}_{x}}d\tau\Big\|_{L^{2}_{s}}\nonumber\\
&\leq&B^{1/2}\sup_{\tau}\Big(\int_{\tau}^{\tau+B}\big\|K_{> M}\mathcal {V}_{1\epsilon}^{1/2}\mathscr{U}_{1}(s,\tau)\mathcal{V}^{1/2}_{2}(\tau)\big\|^{2}_{L^{2}_{x}\rightarrow L^{2}_{x}}ds\Big)^{1/2}\big\|\mathcal{V}_{2}^{1/2}(\tau)Z(\tau)\big\|_{L^{2}_{\tau}L^{2}_{x}},\nonumber
\end{eqnarray}
Notice that by (\ref{eq-442}),
\begin{eqnarray}\label{eq-447}
&&\int_{\tau}^{\tau+B}\big\|K_{> M}\mathcal {V}_{1\epsilon}^{1/2}\mathscr{U}_{1}(s,\tau)\mathcal{V}^{1/2}_{2}(\tau)\big\|^{2}_{L^{2}_{x}\rightarrow L^{2}_{x}}ds\nonumber\\
&\leq&M^{-1}\int_{\tau}^{\tau+B}\big\|\nabla\langle\nabla\rangle^{-1/2}\mathcal {V}_{1\epsilon}^{1/2}\mathscr{U}_{1}(s,\tau)\mathcal{V}^{1/2}_{2}(\tau)\big\|^{2}_{L^{2}_{x}\rightarrow L^{2}_{x}}ds\nonumber\\
&\leq&M^{-1}\int_{\tau}^{\tau+B}\big\|\big[\nabla\langle\nabla\rangle^{-1/2},\mathcal {V}_{1\epsilon}^{1/2}\big]\mathscr{U}_{1}(s,\tau)\mathcal{V}^{1/2}_{2}(\tau)\big\|^{2}_{L^{2}_{x}\rightarrow L^{2}_{x}}ds\nonumber\\
&&\ \ \ \ \ \ +\ M^{-1}\int_{\tau}^{\tau+B}\big\|\mathcal {V}_{1\epsilon}^{1/2}\nabla\langle\nabla\rangle^{-1/2}\mathscr{U}_{1}(s,\tau)\mathcal{V}^{1/2}_{2}(\tau)\big\|^{2}_{L^{2}_{x}\rightarrow L^{2}_{x}}ds\nonumber\\
&\lesssim&\epsilon^{3} M^{-1}B.\nonumber
\end{eqnarray}
Thus,
\begin{eqnarray}\label{eq-448}
\big\|J_{2}^{H,1}\big\|_{L^{2}_{t}L^{2}_{x}}&\leq& \epsilon^{3/2}BM^{-1/2}\big\|\mathcal{V}_{2}^{1/2}(\tau)Z(\tau)\big\|_{L^{2}_{\tau}L^{2}_{x}}\nonumber\\
&\leq&\epsilon^{3/2}BM^{-1/2}C(T)\big(\|Z_{0}\|_{L^{2}_{x}}+\big\|F\big\|_{L^{\tilde{p}'}_{t}L^{\tilde{q}'}_{x}}\big),\nonumber
\end{eqnarray}
which combining (\ref{eq-444'}) lead to
\begin{eqnarray}\label{eq-449}
\big\|J_{2}^{H}\big\|_{L^{2}_{t}L^{2}_{x}}&\lesssim& \big(AB\epsilon^{2}M^{-1}+\epsilon^{3/2}BM^{-1/2}\big)C(T)\big(\|Z_{0}\|_{L^{2}_{x}}+\big\|F\big\|_{L^{\tilde{p}'}_{t}L^{\tilde{q}'}_{x}}\big)\nonumber\\
&\leq& \frac{1}{2}C(T)\big(\|Z_{0}\|_{L^{2}_{x}}+\big\|F\big\|_{L^{\tilde{p}'}_{t}L^{\tilde{q}'}_{x}}\big),
\end{eqnarray}
here we choose $M\gg A,B$.

It remains to prove the claims (\ref{eq-439}) and (\ref{eq-442}). For (\ref{eq-442}),
we only need to prove it for $\tau=0$. Denote by $\Psi(t)=\mathscr{U}_{1}(t)f$ and then
$\Psi(\epsilon^{-1}x,\epsilon^{-2}t)$ satisfies the homogeneous Schr\"{o}dinger equation
\begin{eqnarray}\label{eq-450}
&&i\partial_{t}\Psi(t)=\big(\mathscr{H}_{0}+\mathcal {V}_{1}\big)\Psi(t),\nonumber\\
&&\Psi(t)=f(\epsilon^{-1}\cdot).
\end{eqnarray}
Therefore, by using (3.29) in the proof of \cite[Lemma 3.1]{RSS1}, we have for any constant $T>0$ and $\alpha>0$,
\begin{eqnarray}\label{eq-451}
\int_{0}^{\epsilon^{2}T}\int_{\mathbb{R}^{3}}(1+|x|^{\alpha})^{-(1/\alpha+1)}\big|\nabla\langle\nabla\rangle^{-1/2}\Psi(\epsilon^{-1}x,\epsilon^{-2}t)\big|^{2}dxdt
\leq  \epsilon^{2}T\big(1+\|\mathcal{V}_{1}\|_{L^{\infty}}\big)\|f(\epsilon^{-1}\cdot)\|^{2}_{L^{2}_{x}}.\nonumber
\end{eqnarray}
which leads to
\begin{eqnarray}\label{eq-452}
\int_{0}^{T}\int_{\mathbb{R}^{3}}(1+|\epsilon x|^{\alpha})^{-(1/\alpha+1)}\big|\nabla\langle\nabla\rangle^{-1/2}\Psi(x,t)\big|^{2}dxdt
\leq \epsilon T\big(1+\|\mathcal{V}_{1}\|_{L^{\infty}}\big)\|f\|^{2}_{L^{2}_{x}}.
\end{eqnarray}
Thus
\begin{eqnarray}\label{eq-453}
&&\int_{0}^{T}\int_{\mathbb{R}^{3}}\big|\mathcal {V}_{1\epsilon}^{1/2}\nabla\langle\nabla\rangle^{-1/2}\Psi(x,t)\big|^{2}dxdt\nonumber\\
&=&\int_{0}^{T}\int_{\mathbb{R}^{3}}\big|\mathcal {V}_{1\epsilon}^{1/2}(1+|\epsilon x|^{\alpha})^{(1/\alpha+1)}\big|(1+|\epsilon x|^{\alpha})^{-(1/\alpha+1)}\big|\nabla\langle\nabla\rangle^{-1/2}\Psi(x,t)\big|^{2}dxdt\nonumber\\
&\leq& \epsilon^{3} T\big(1+\|\mathcal{V}_{1}\|_{L^{\infty}}\big)\|f\|^{2}_{L^{2}_{x}}.
\end{eqnarray}
Let us turn to (\ref{eq-439}), the proof follows from a commutator argument. Assume that $\epsilon t\leq1$,
\begin{eqnarray}\label{eq-456}
&&\sup_{|t-s|\leq A}\big\|\mathcal {V}^{1/2}_{2}(t)\chi_{2}\mathscr{U}_{2}(t,s)P_{c}(s)K_{\leq M}\mathcal {V}_{1\epsilon}^{1/2}\big\|_{L^{2}_{x}\rightarrow L^{2}_{x}}\nonumber\\
&\lesssim& \big\|K_{\leq M}\big\|_{L^{2}_{x}\rightarrow L_{x}^{2}}\big\|\mathcal {V}_{1\epsilon}^{1/2}\big\|_{L^{\infty}}\lesssim \epsilon\lesssim t^{-1},
\end{eqnarray}
where $K_{\leq M}f=\big(\hat{\eta}(\xi/M)\hat{f}(\xi)\big)^{\vee}$ with some smooth bump function $\eta$.
If $\epsilon t>1$, we use $P^{2}_{c}(s)=P_{c}(s)$ and the intertwining identity (\ref{eq-43}), consider
\begin{eqnarray}\label{eq-454}
\big\|\big[\chi_{2}(t),P_{c}(t)\big]f\big\|_{L^{2}_{x}}&=&\big\|\big[\chi_{2}(t),P_{b}(t)\big]f\big\|_{L^{2}_{x}}\nonumber\\
&=&\Big\|\sum_{j=1}^{8}\widetilde{\xi}_{j}(x,t;\widetilde{\sigma}(t))\big\langle f, (\chi_{2}(t)-1)\widetilde{\xi}_{j}(x,t;\widetilde{\sigma}(t))\big\rangle\nonumber\\
&&\ \ \ -\ (\chi_{2}(t)-1)\widetilde{\xi}_{j}(x,t;\widetilde{\sigma}(t))\big\langle f, \widetilde{\xi}_{j}(x,t;\widetilde{\sigma}(t))\big\rangle\Big\|_{L^{2}_{x}}\nonumber\\
&\lesssim& e^{-c\epsilon t}\|f\|_{L^{2}_{x}}
\end{eqnarray}
with some constant $c>0$
and then
\begin{eqnarray}\label{eq-455}
\big\|\mathcal {V}^{1/2}_{2}(t)\big[\chi_{2}(t),P_{c}(t)\big]\mathscr{U}_{2}(t,s)P_{c}(s)K_{\leq M}\mathcal {V}_{1\epsilon}^{1/2}f\big\|_{L^{2}_{x}}&\lesssim& e^{-c\epsilon t}\big\|K_{\leq M}\big\|_{L^{2}_{x}\rightarrow L_{x}^{2}}\big\|\mathcal {V}_{1\epsilon}^{1/2}\big\|_{L^{\infty}}\|f\|_{L^{2}_{x}}\nonumber\\
&\lesssim& \epsilon e^{-c\epsilon t} \|f\|_{L^{2}_{x}}\lesssim t^{-1}\|f\|_{L^{2}_{x}}.
\end{eqnarray}
 Notice that
\begin{eqnarray}\label{eq-456}
\big[\chi_{2}(t),\mathscr{U}_{2}(t,s)\big]&=&\mathscr{U}_{2}(t,s)\big(\mathscr{U}_{2}(s,t)\chi_{2}(t)\mathscr{U}_{2}(t,s)-\chi_{2}(t)\big)\nonumber\\
&=&i\mathscr{U}_{2}(t,s)\int_{s}^{t}\mathscr{U}_{2}(s,\tau)\big[\mathscr{H}_{2}(\tau),\chi_{2}(t)\big]\mathscr{U}_{2}(\tau,s)d\tau\nonumber\\
&=&i\mathscr{U}_{2}(t,s)\int_{s}^{t}\mathscr{U}_{2}(s,\tau)\big(\big[\mathscr{H}_{0},\chi_{2}(t)\big]+\big[E(\tau),\chi_{2}(t)\big]\big)\mathscr{U}_{2}(\tau,s)d\tau,
\end{eqnarray}
where
$\mathscr{H}_{2}(t)=\mathscr{H}_{0}+\mathcal {V}_{2}(t,\widetilde{\sigma}(t))+E(t)$ with
$E(t)=iT_{0}(t)\big[\overline{P}_{c}'(t),\overline{P}_{c}(t)\big]T^{\ast}_{0}(t)$ is defined as in (\ref{eq-41}).  Then it follows
\begin{eqnarray}\label{eq-459}
&&\big\|\mathcal {V}^{1/2}_{2}(t)P_{c}(t)\big[\chi_{2}(t),\mathscr{U}_{2}(t,s)\big]P_{c}(s)K_{\leq M}\mathcal {V}_{1\epsilon}^{1/2}\big\|_{L^{2}_{x}\rightarrow L^{2}_{x}}\nonumber\\
&\lesssim& \epsilon A\sup_{\tau,s}\big\|[\mathscr{H}_{0},\chi_{2}(t)\big]\mathscr{U}_{2}(\tau,s)P_{c}(s)K_{\leq M}\big\|_{L^{2}_{x}\rightarrow L^{2}_{x}}\nonumber\\
&&\ \ \ \ \ \ \ \ \ \ \ \ \ \ \ \ +\ \epsilon A\sup_{\tau,s}\big\|\big[E(\tau),\chi_{2}(t)\big]\mathscr{U}_{2}(\tau,s)P_{c}(s)K_{\leq M}\big\|_{L^{2}_{x}\rightarrow L^{2}_{x}}.
\end{eqnarray}
Observe now that
\begin{eqnarray}\label{eq-457}
\big|\nabla \chi_{2}(t)\big|\lesssim \frac{1}{\delta\epsilon t},\ \ \big|\Delta \chi_{2}(t)\big|\lesssim \frac{1}{(\delta\epsilon t)^{2}}\nonumber
\end{eqnarray}
and then
\begin{eqnarray}\label{eq-460}
\sup_{\tau,s}\big\|[\mathscr{H}_{0},\chi_{2}(t)]\mathscr{U}_{2}(\tau,s)P_{c}(s)K_{\leq M}\big\|_{L^{2}_{x}\rightarrow L^{2}_{x}}
&\lesssim&
\sup_{\tau,s}\big\|\nabla \chi_{2}(t)\nabla\mathscr{U}_{2}(\tau,s)P_{c}(s)K_{\leq M}\big\|_{L^{2}_{x}\rightarrow L^{2}_{x}}\nonumber\\
&&\ \ +\ \sup_{\tau,s}\big\|\Delta \chi_{2}(t)\mathscr{U}_{2}(\tau,s)P_{c}(s)K_{\leq M}\big\|_{L^{2}_{x}\rightarrow L^{2}_{x}}\nonumber\\
&\lesssim& M\big(\frac{1}{\delta\epsilon t}+\frac{1}{(\delta\epsilon t)^{2}}\big),
\end{eqnarray}
where we use the fact that
\begin{eqnarray}
\sup_{\tau,s}\big\|\nabla\mathscr{U}_{2}(\tau,s)P_{c}(s)K_{\leq M}\big\|_{L^{2}_{x}\rightarrow L^{2}_{x}}\lesssim M,\nonumber
\end{eqnarray}
and it is obtained by interpolation between
\begin{eqnarray}
\sup_{\tau,s}\big\|\mathscr{U}_{2}(\tau,s)P_{c}(s)K_{\leq M}\big\|_{L^{2}_{x}\rightarrow L^{2}_{x}}\lesssim 1\nonumber
\end{eqnarray}
and
\begin{eqnarray}\label{eq-459'}
&&\sup_{\tau,s}\big\|\Delta\mathscr{U}_{2}(\tau,s)P_{c}(s)K_{\leq M}f\big\|_{L^{2}_{x}}\nonumber\\
&\lesssim& \sup_{\tau,s}\big\|\mathscr{H}_{2}(\tau)\mathscr{U}_{2}(\tau,s)P_{c}(s)K_{\leq M}f\big\|_{L^{2}_{x}}
+\sup_{\tau,s}\big\|\mathcal{V}_{2}(\tau)\mathscr{U}_{2}(\tau,s)P_{c}(s)K_{\leq M}f\big\|_{L^{2}_{x}}\nonumber\\
&&\ \ \ \ \ \ \ \ +\ \sup_{\tau,s}\big\|E(\tau)\mathscr{U}_{2}(\tau,s)P_{c}(s)K_{\leq M}f\big\|_{L^{2}_{x}}\nonumber\\
&\lesssim& \big\|\Delta K_{\leq M}f\big\|_{L^{2}_{x}}+\big\| K_{\leq M}f\big\|_{L^{2}_{x}}\lesssim M^{2}\|f\|_{L^{2}_{x}}.
\end{eqnarray}
Here for the first term in the first inequality of (\ref{eq-459'}), we use the following identity which is obtained  by differentiation both side of (\ref{eq-43}) with respect to $s$ at $s=t$,
\begin{eqnarray}
i\dot{P_{c}}(t)=\mathscr{H}_{2}(t)P_{c}(t)-P_{c}(t)\mathscr{H}_{2}(t).\nonumber
\end{eqnarray}
On the other hand, notice that the formula (\ref{eq-424}) for $E(\tau)$ and
\begin{eqnarray}\label{eq-458}
\big[E(\tau),\chi_{2}(t)\big]f&=&\big(\beta'(t)-y'(t)\big)\sum_{j,k}\big(\tilde{\Phi}_{j}\big\langle\tilde{\Psi}_{k},\chi_{2}(t)f\big\rangle
-\chi_{2}(t)\tilde{\Phi}_{j}\big\langle\tilde{\Psi}_{k},f\big\rangle\big)\nonumber\\
&=&\big(\beta'(t)-y'(t)\big)\sum_{j,k}\big(\tilde{\Phi}_{j}\big\langle(\chi_{2}(t)-1)\tilde{\Psi}_{k},f\big\rangle
-(\chi_{2}(t)-1)\tilde{\Phi}_{j}\big\langle\tilde{\Psi}_{k},f\big\rangle\big)\nonumber
\end{eqnarray}
with $\tilde{\Phi}_{j}$ and $\tilde{\Psi}_{k}$ centered at $b(\tau)$, similarly to (\ref{eq-454}),
\begin{eqnarray}\label{eq-461}
\big\|\big[E(\tau),\chi_{2}(t)\big]f\big\|_{L^{2}_{x}}\lesssim e^{-c\epsilon t}\|f\|_{L^{2}_{x}},\nonumber
\end{eqnarray}
where we use the fact that $\tau\in [s,t]$ and $s\in[t-A,t]$. Then we obtain
\begin{eqnarray}\label{eq-462}
\sup_{\tau,s}\big\|\big[E(\tau),\chi_{2}(t)\big]\mathscr{U}_{2}(\tau,s)P_{c}(s)K_{\leq M}\big\|_{L^{2}_{x}\rightarrow L^{2}_{x}}\lesssim e^{-c\epsilon t}\|f\|_{L^{2}_{x}}.
\end{eqnarray}
Therefore, it follows from  (\ref{eq-459}), (\ref{eq-460}) and (\ref{eq-463}) that
\begin{eqnarray}
\label{eq-463}
&&\big\|\mathcal {V}^{1/2}_{2}(t)P_{c}(t)\big[\chi_{2}(t),\mathscr{U}_{2}(t,s)\big]P_{c}(s)K_{\leq M}\mathcal {V}_{1\epsilon}^{1/2}\big\|_{L^{2}_{x}\rightarrow L^{2}_{x}}\nonumber\\
&\lesssim& \epsilon A\big(\frac{M}{\delta\epsilon t}+\frac{M}{(\delta\epsilon t)^{2}}+e^{-c\epsilon t}\big)\lesssim \frac{AM}{\delta t}.
\end{eqnarray}
Now we proceed to $\mathcal {V}^{1/2}_{2}(t)P_{c}(s)\mathscr{U}_{2}(t,s)\big[\chi_{2}(t),P_{c}(s)\big]K_{\leq M}\mathcal {V}_{1\epsilon}^{1/2}$, since $s\in [t-A,t]$, similarly to (\ref{eq-455}),
\begin{eqnarray}\label{eq-464}
\big\|\mathcal {V}^{1/2}_{2}(t)P_{c}(t)\mathscr{U}_{2}(t,s)\big[\chi_{2}(t),P_{c}(s)\big]K_{\leq M}\mathcal {V}_{1\epsilon}^{1/2}f\big\|_{L^{2}_{x}}
\lesssim \epsilon e^{-c\epsilon t} \|f\|_{L^{2}_{x}}\lesssim \frac{\|f\|_{L^{2}_{x}}}{t}.
\end{eqnarray}
Finally,
\begin{eqnarray}\label{eq-465}
\big\|[\chi_{2}(t),K_{\leq M}]\big\|_{L^{2}_{x}\rightarrow L^{2}_{x}}\lesssim  M^{-1}\|\nabla \chi_{2}\|_{L^{\infty}_{x}}\lesssim M^{-1}\frac{1}{\epsilon\delta t},\nonumber
\end{eqnarray}
which leads to
\begin{eqnarray}\label{eq-466}
&&\big\|\mathcal {V}^{1/2}_{2}(t)P_{c}(t)\mathscr{U}_{2}(t,s)P_{c}(s)[\chi_{2}(t),K_{\leq M}]\mathcal {V}_{1\epsilon}^{1/2}f\big\|_{L^{2}_{x}}\nonumber\\
&\lesssim& \epsilon M^{-1}\frac{1}{\epsilon\delta t} \|f\|_{L^{2}_{x}}\lesssim (M\delta t)^{-1}\|f\|_{L^{2}_{x}}.
\end{eqnarray}
One concludes from  (\ref{eq-455}), (\ref{eq-463}), (\ref{eq-464}) and (\ref{eq-466}) that for $\epsilon t>1$,
\begin{eqnarray}\label{eq-467}
\big\|\mathcal {V}^{1/2}_{2}(t)\chi_{2}\mathscr{U}_{2}(t,s)P_{c}(s)K_{\leq M}\mathcal {V}_{1\epsilon}^{1/2}-\mathcal {V}^{1/2}_{2}(t)\mathscr{U}_{2}(t,s)P_{c}(s)K_{\leq M}\chi_{2}\mathcal {V}_{1\epsilon}^{1/2}\big\|_{L^{2}_{x}\rightarrow L^{2}_{x}}\lesssim AM(\delta t)^{-1},\nonumber
\end{eqnarray}
which combining the observation $\chi_{2}\mathcal {V}_{1\epsilon}=0$
finish the proof of claim (\ref{eq-439}).

\emph{$II_{2}$. Local decay for $\mathcal {V}^{1/2}_{1\epsilon}\chi_{1}(t,\cdot)Z(t)$.} This will be done by using similar argument as the one in $II_{1}$, we will not write in detail and only sketch the proof. For large $A\ll T$ to be fixed later, it follows from Duhamel's formula that
\begin{eqnarray}\label{eq-469}
\mathcal {V}^{1/2}_{1\epsilon}\chi_{1}Z(t)&=&\mathcal {V}^{1/2}_{1\epsilon}\chi_{1}\mathscr{U}_{1}(t)Z_{0}-i\mathcal {V}^{1/2}_{1\epsilon}\chi_{1}\int^{t-A}_{0}\mathscr{U}_{1}(t,s)\mathcal {V}_{2}(s)Z(s)ds\nonumber\\
&&-\ i\mathcal {V}^{1/2}_{1\epsilon}\chi_{1}\int^{t}_{t-A}\mathscr{U}_{1}(t,s)\mathcal {V}_{2}(s)Z(s)ds+i\mathcal {V}^{1/2}_{1\epsilon}\chi_{1}\int^{t}_{t-A}\mathscr{U}_{1}(t,s)F(s)ds.
\end{eqnarray}
By using the endpoint Strichartz estimates for $\mathscr{U}_{1}(t)$ (see Lemma \ref{le-42}), we have
\begin{eqnarray}\label{eq-470}
\big\|\mathcal {V}^{1/2}_{1\epsilon}\chi_{1}\mathscr{U}_{1}(t)Z_{0}\big\|_{L^{2}_{t}L^{2}_{x}}
&\leq&\big\|\mathcal {V}^{1/2}_{1\epsilon}\chi_{1}\big\|_{L^{\infty}_{t}L^{3}_{x}}\big\|\mathscr{U}_{1}(t)Z_{0}\big\|_{L^{2}_{t}L^{6}_{x}}\nonumber\\
&\lesssim& \|Z_{0}\|_{L^{2}}.
\end{eqnarray}
and
\begin{eqnarray}\label{eq-471}
\Big\|\mathcal {V}^{1/2}_{1\epsilon}\chi_{1}\int^{t}_{t-A}\mathscr{U}_{1}(t,s)F(s)ds\Big\|_{L^{2}_{t}L^{2}_{x}}
&\leq&\big\|\mathcal {V}^{1/2}_{1\epsilon}\chi_{1}\big\|_{L^{\infty}_{t}L^{3}_{x}}\Big\|\int^{t}_{t-A}\mathscr{U}_{1}(t,s)F(s)ds\Big\|_{L^{2}_{t}L^{6}_{x}}\nonumber\\
&\lesssim& \big\|F\big\|_{L^{\tilde{p}'}_{t}L^{\tilde{q}'}_{x}}.
\end{eqnarray}
The proof for the second term in (\ref{eq-469}) is essentially similar to the one in (\ref{eq-411}). More precisely, one only need to estimate it with  $$h_{2}(t,s)=\big\|\mathscr{U}_{2}(t,s)P_{c}(s)\big\|_{L^{6/5,1}_{x}\rightarrow L^{6,\infty}_{x}}$$
replaced by
$$h_{1}(t,s)=\big\|\mathscr{U}_{1}(t,s)\big\|_{L^{6/5,1}_{x}\rightarrow L^{6,\infty}_{x}}$$
in (\ref{eq-428})-(\ref{eq-430'}). We omit these repeated procedures. As for the third term in (\ref{eq-469}),
\begin{eqnarray}\label{eq-472}
\mathcal {V}^{1/2}_{1\epsilon}\chi_{1}\int^{t}_{t-A}\mathscr{U}_{1}(t,s)\mathcal {V}_{2}(s)Z(s)ds&=&\mathcal {V}^{1/2}_{1\epsilon}\chi_{1}\int^{t}_{t-A}\mathscr{U}_{1}(t,s)\mathcal {V}_{2}(s)P_{b}(s)Z(s)ds\nonumber\\
&&\ \ \ \ +\ \mathcal {V}^{1/2}_{1\epsilon}\chi_{1}\int^{t}_{t-A}\mathscr{U}_{1}(t,s)\mathcal {V}_{2}(s)P_{c}(s)Z(s)ds\nonumber\\
&:=&J_{b}+J_{c}.
\end{eqnarray}
It follows from Shur's Lemma and (\ref{eq-302}) that
\begin{eqnarray}\label{eq-473}
\|J_{b}\|_{L^{2}_{t}L^{2}_{x}}&\leq& \big\|\mathcal {V}^{1/2}_{1\epsilon}\chi_{1}\big\|_{L^{\infty}_{t}L^{3,2}_{x}}\big\|\mathcal {V}^{1/2}_{2}\big\|_{L^{\infty}_{t}L^{3,2}_{x}}
\big\|\mathcal {V}^{1/2}_{2}\big\|_{L^{\infty}_{t}L^{3}_{x}}\Big\|\int^{t}_{t-A}h_{1}(t,s)\|P_{b}(s)Z(s)\|_{L^{6}_{x}}ds\Big\|_{L^{2}_{t}}\nonumber\\
&\lesssim&\|P_{b}(s)Z(s)\|_{L^{2}_{s}L^{6}_{x}}\lesssim B.
\end{eqnarray}
On the other hand,
\begin{eqnarray}\label{eq-474}
J_{c}&=&\mathcal {V}^{1/2}_{1\epsilon}\chi_{1}\int^{t}_{t-A}\mathscr{U}_{1}(t,s)\mathcal {V}_{2}(s)P_{c}(s)\mathscr{U}_{2}(s)Z_{0}ds\nonumber\\
&&\ \ \ -\ i\mathcal {V}^{1/2}_{1\epsilon}\chi_{1}\int^{t}_{t-A}\mathscr{U}_{1}(t,s)\mathcal {V}_{2}(s)\int^{s-B}_{0}\mathscr{U}_{2}(s,\tau)P_{c}(\tau)\mathcal {V}_{1\epsilon}Z(\tau)d\tau ds\nonumber\\
&&\ \ \ \ -\ i\mathcal {V}^{1/2}_{1\epsilon}\chi_{1}\int^{t}_{t-A}\mathscr{U}_{1}(t,s)\mathcal {V}_{2}(s)\int^{s}_{s-B}\mathscr{U}_{2}(s,\tau)P_{c}(\tau)\mathcal {V}_{1\epsilon}Z(\tau)d\tau ds\nonumber\\
&&\ \ \ \ \ \ +\ i\mathcal {V}^{1/2}_{1\epsilon}\chi_{1}\int^{t}_{t-A}\mathscr{U}_{1}(t,s)\mathcal {V}_{2}(s)\int^{s}_{0}\mathscr{U}_{2}(s,\tau)P_{c}(\tau)E(\tau)Z(\tau)d\tau ds\nonumber\\
&&\ \ \ \ \ \ \ \ \ -i\mathcal {V}^{1/2}_{1\epsilon}\chi_{1}\int^{t}_{t-A}\mathscr{U}_{1}(t,s)\mathcal {V}_{2}(s)\int^{s}_{0}\mathscr{U}_{2}(s,\tau)P_{c}(\tau)F(\tau)d\tau ds\nonumber\\
&:=&J^{1}_{c}+J^{2}_{c}+J^{3}_{c}+J^{4}_{c}+J^{5}_{c}.\nonumber
\end{eqnarray}
Then by using Shur's Lemma and Lemma \ref{le-41}, we obtain
\begin{eqnarray}\label{eq-475}
&& \|J^{1}_{c}\|_{L^{2}_{t}L^{2}_{x}}\nonumber\\
 &\leq& \big\|\mathcal {V}^{1/2}_{1\epsilon}\chi_{1}\big\|_{L^{\infty}_{t}L^{3,2}_{x}}\big\|\mathcal {V}^{1/2}_{2}\big\|_{L^{\infty}_{t}L^{3,2}_{x}}
\big\|\mathcal {V}^{1/2}_{2}\big\|_{L^{\infty}_{t}L^{3}_{x}}\Big\|\int^{t}_{t-A}h_{1}(t,s)\|\mathscr{U}_{2}(s)P_{c}(s)Z_{0}\|_{L^{6}_{x}}ds\Big\|_{L^{2}_{t}}\nonumber\\
&\lesssim& \|\mathscr{U}_{2}(s)P_{c}(s)Z_{0}\|_{L^{2}_{s}L^{6}_{x}}\lesssim \|Z_{0}\|_{L^{2}_{x}}.
\end{eqnarray}
Similarly,
\begin{eqnarray}\label{eq-476}
 \|J^{2}_{c}\|_{L^{2}_{t}L^{2}_{x}}+ \|J^{4}_{c}\|_{L^{2}_{t}L^{2}_{x}}+ \|J^{5}_{c}\|_{L^{2}_{t}L^{2}_{x}}&\lesssim& \Big\|\mathcal {V}_{2}^{1/2}(s)\int^{s-B}_{0}\mathscr{U}_{2}(s,\tau)P_{c}(\tau)\mathcal {V}_{1\epsilon}Z(\tau)d\tau\Big\|_{L^{2}_{s}L^{2}_{x}}\nonumber\\
 &&\  \ \ \ \ +\ \Big\|\mathcal {V}_{2}^{1/2}(s)\int^{s}_{0}\mathscr{U}_{2}(s,\tau)P_{c}(\tau)E(\tau)Z(\tau)d\tau\Big\|_{L^{2}_{s}L^{2}_{x}}\nonumber\\
 &&\  \ \ \ \ \ \ +\ \Big\|\mathcal {V}_{2}^{1/2}(s)\int^{s}_{0}\mathscr{U}_{2}(s,\tau)P_{c}(\tau)F(\tau)d\tau\Big\|_{L^{2}_{s}L^{2}_{x}}\nonumber\\
 &\leq&\frac{1}{2}C(T)\big(\|Z_{0}\|_{L^{2}_{x}}+\big\|F\big\|_{L^{\tilde{p}'}_{t}L^{\tilde{q}'}_{x}}\big),
\end{eqnarray}
where we use (\ref{eq-427})-(\ref{eq-430'}), (\ref{eq-425}) and (\ref{eq-417'}) in the last inequality.
\begin{eqnarray}\label{eq-477}
iJ^{3}_{c}&=&\mathcal {V}^{1/2}_{1\epsilon}\chi_{1}\int^{t}_{t-A}\mathscr{U}_{1}(t,s)K_{\leq M}\mathcal {V}_{2}(s)\int^{s}_{s-B}\mathscr{U}_{2}(s,\tau)P_{c}(\tau)\mathcal {V}_{1\epsilon}Z(\tau)d\tau ds\nonumber\\
&&\ \ \ +\ \mathcal {V}^{1/2}_{1\epsilon}\chi_{1}\int^{t}_{t-A}\mathscr{U}_{1}(t,s)K_{\geq M}\mathcal {V}_{2}(s)\int^{s}_{s-B}\mathscr{U}_{2}(s,\tau)P_{c}(\tau)\mathcal {V}_{1\epsilon}Z(\tau)d\tau ds\nonumber\\
&:=&J^{3,L}_{c}+J^{3,H}_{c}.
\end{eqnarray}
Now we follow the same arguments as the ones treating $J^{L}_{2}$ and $J_{2}^{H}$ in (\ref{eq-438}) and reduce the proof to the following two claims:
\begin{eqnarray}\label{eq-478}
\sup_{|t-s|\leq A}\big\|\mathcal {V}^{1/2}_{1\epsilon}\chi_{1}\mathscr{U}_{1}(t,s)K_{\leq M}\mathcal {V}^{1/2}_{2}(s)\big\|_{L^{2}_{x}\rightarrow L^{2}_{x}}\leq \frac{AM}{\delta t}
\end{eqnarray}
and
for all $T>0$ and $\alpha>0$,
\begin{eqnarray}\label{eq-479}
&&\int_{\tau}^{T}\int_{\mathbb{R}^{3}}\big|\mathcal {V}^{1/2}_{2}(s)\nabla\langle\nabla\rangle^{-1/2}\mathscr{U}_{2}(s,\tau)P_{c}(\tau)f\big|^{2}dxds\nonumber\\
&\leq&  (T-\tau)\big(1+\|\mathcal{V}_{2}\|_{L^{\infty}}\big)\|f\|^{2}_{L^{2}_{x}}.
\end{eqnarray}
The proof for (\ref{eq-478}) shares exactly the same method as the one used in the proof of (\ref{eq-439}). As for (\ref{eq-479}), we here will sketch the proof by using the idea in \cite[Lemma 3.4]{RSS1}. By using the trick in (\ref{eq-453}), it is enough to prove that for all $T>0$ and $\alpha>0$,
\begin{eqnarray}\label{eq-480}
&&\int_{\tau}^{T}\int_{\mathbb{R}^{3}}(1+|x-b(s)|^{\alpha})^{-(1/\alpha+1)}\big|\nabla\langle\nabla\rangle^{-1/2}\mathscr{U}_{2}(s,\tau)P_{c}(\tau)f\big|^{2}dxds\nonumber\\
&\leq& (T-\tau)\big(1+\|\mathcal{V}_{2}\|_{L^{\infty}}\big)\|f\|^{2}_{L^{2}_{x}}.
\end{eqnarray}
To this end, denote  $m:=w(x-b(s))(x-b(s))\frac{\nabla}{\langle\nabla\rangle}$ and $\psi(s,\tau)=\mathscr{U}_{2}(s,\tau)P_{c}(\tau)f$,
where
$$w(x)=(1+|x|^{\alpha})^{-1/\alpha},\ \ \ \alpha>0.$$
One has
\begin{eqnarray}\label{eq-481}
&&\frac{d}{ds}\langle m\psi(s,\tau), \psi(s,\tau)\rangle\nonumber\\
&=&\dot{b}(s)\langle(\nabla w\cdot x+w)(\cdot-b(s))\frac{\nabla}{\langle\nabla\rangle}\psi(s,\tau),\psi(s,\tau)\rangle-i\langle[m,H(s)]\psi(s,\tau),\psi(s,\tau)\rangle\nonumber\\
&=&\dot{b}(s)\langle(\nabla w\cdot x+w)(\cdot-b(s))\frac{\nabla}{\langle\nabla\rangle}\psi(s,\tau),\psi(s,\tau)\rangle\nonumber\\
&&\ +\ i\int_{\mathbb{R}^{3}}w(\cdot-b(s))|\nabla\langle\nabla\rangle^{-1/2}\psi(s,\tau)|^{2}dx\nonumber\\
&&\ \  +\ i\int_{\mathbb{R}^{3}}\nabla w\cdot x(\cdot-b(s)))|\nabla\langle\nabla\rangle^{-1/2}\psi(s,\tau)|^{2}dx\nonumber\\
&&\ \ \  -\ i\big\langle [\langle\nabla\rangle^{1/2},w(\cdot-b(s))]\frac{\nabla}{\langle\nabla\rangle}\psi(s,\tau), \nabla\langle\nabla\rangle^{-1/2}\psi(s,\tau)\big\rangle\nonumber\\
&&\ \ \ \  -\ i\big\langle [\langle\nabla\rangle^{1/2},\nabla w\cdot x(\cdot-b(s)))]\frac{\nabla}{\langle\nabla\rangle}\psi(s,\tau), \nabla\langle\nabla\rangle^{-1/2}\psi(s,\tau)\big\rangle\nonumber\\
&&\ \ \ \ +\ \frac{i}{2}\big\langle \Delta(w\cdot x(\cdot-b(s)))\frac{\nabla}{\langle\nabla\rangle}\psi(s,\tau),\psi(s,\tau)\big\rangle
 - i\langle [m,\mathcal {V}_{2}(s)]\psi(s,\tau),\psi(s,\tau)\rangle\nonumber\\
&:=&I_{1}+I_{2}+I_{3}+I_{4}+I_{5}+I_{6}+I_{7}.
\end{eqnarray}
By using the estimates in the proof of \cite[Lemma 3.4]{RSS1}, we have
\begin{eqnarray}\label{eq-482}
I_{2}+I_{3}\geq \int_{\mathbb{R}^{3}}(1+|x-b(s)|^{\alpha})^{-(1/\alpha+1)}\big|\nabla\langle\nabla\rangle^{-1/2}\psi(s,\tau)\big|^{2}dx
\end{eqnarray}
and
\begin{eqnarray}\label{eq-483}
&&I_{4}+I_{5}\nonumber\\
&\leq& C\|\psi(s,\tau)\|_{L^{2}_{x}}+\frac{1}{4}\int_{\mathbb{R}^{3}}(1+|x-b(s)|^{\alpha})^{-(1/\alpha+1)}\big|\nabla\langle\nabla\rangle^{-1/2}\psi(s,\tau)\big|^{2}dx.
\end{eqnarray}
Moreover, since the multipliers $m$, $\dot{b}(s)(\nabla w\cdot x+w)(\cdot-b(s))\frac{\nabla}{\langle\nabla\rangle}$ and $\Delta(w\cdot x(\cdot-b(s)))\frac{\nabla}{\langle\nabla\rangle}$ are bounded in $L^{2}$ uniformly in $s$, it follows
\begin{eqnarray}\label{eq-484}
I_{1}+I_{6}+I_{7}\leq C(1+\|\mathcal {V}_{2}(s)\|_{L^{\infty}_{x}})\|\psi(s,\tau)\|_{L^{2}_{x}}.
\end{eqnarray}
Now integrating both sides of (\ref{eq-481}) in time and using estimates (\ref{eq-482})-(\ref{eq-484}) and the fact
$$|\langle m\psi(\tau,\tau), \psi(\tau,\tau)+\langle m\psi(T,\tau), \psi(T,\tau)\rangle\rangle|+\|\psi(s,\tau)\|_{L^{2}_{x}}\leq C\|f\|_{L^{2}},$$
we obtain (\ref{eq-480}).

\emph{$II_{3}$. Local decay for $\mathcal {V}^{1/2}_{2}(t,\widetilde{\sigma}(t))\chi_{1}(t,\cdot)Z(t)$ and $\mathcal {V}^{1/2}_{1\epsilon}\chi_{2}(t,\cdot)P_{c}(t)Z(t)$.}
The local decay for $\mathcal {V}^{1/2}_{1\epsilon}\chi_{2}(t,\cdot)P_{c}(t)Z(t)$ is easy since $\mathcal {V}^{1/2}_{1\epsilon}(\cdot)\chi_{2}(t,\cdot)=0$ and
\begin{eqnarray}\label{eq-468}
\big\|\mathcal {V}^{1/2}_{2}(t,\widetilde{\sigma}(t))\chi_{1}(t,\cdot)Z(t)\big\|_{L^{2}_{t}L^{2}_{x}}&\leq&\big\|\mathcal {V}^{1/4}_{2}(t,\widetilde{\sigma}(t))\chi_{1}(t,\cdot)\big\|_{L^{\infty}_{t}L^{\infty}_{x}}\big\|\mathcal {V}^{1/4}_{2}(t,\widetilde{\sigma}(t))Z(t)\big\|_{L^{2}_{t}L^{2}_{x}}\nonumber\\
&\leq&\frac{1}{2}C(T)\big(\|Z_{0}\|_{L^{2}_{x}}+\big\|F\big\|_{L^{\tilde{p}'}_{t}L^{\tilde{q}'}_{x}}\big),
\end{eqnarray}
where we use (\ref{eq-419'}) and the fact that $\big\|\mathcal {V}^{1/4}_{2}(t,\widetilde{\sigma}(t))\chi_{1}(t,\cdot)\big\|_{L^{\infty}_{t}L^{\infty}_{x}}$ is arbitrary small.

\emph{$II_{4}$. Local decay for $\mathcal {V}^{1/2}_{2}(t,\widetilde{\sigma}(t))\chi_{3}(t,\cdot)Z(t)$ and $\mathcal {V}^{1/2}_{1\epsilon}\chi_{3}(t,\cdot)Z(t)$.} It is enough to consider
\begin{eqnarray}\label{eq-485}
\mathcal {V}^{1/2}_{2}(t)\chi_{3}Z(t)&=&\mathcal {V}^{1/2}_{2}(t)\chi_{3}\mathscr{U}_{0}(t)Z_{0}-i\mathcal {V}^{1/2}_{2}(t)\chi_{3}\int^{t-A}_{0}\mathscr{U}_{0}(t,s)(\mathcal {V}_{2}(s)+\mathcal {V}_{1\epsilon})Z(s)ds\nonumber\\
&&\ \ -\ i\mathcal {V}^{1/2}_{2}(t)\chi_{3}\int^{t}_{t-A}\mathscr{U}_{0}(t,s)(\mathcal {V}_{2}(s)+\mathcal {V}_{1\epsilon})Z(s)ds\nonumber\\
&&\ \ \ \ +\ i\mathcal {V}^{1/2}_{2}(t)\chi_{3}\int^{t}_{0}\mathscr{U}_{0}(t,s)F(s)ds.\nonumber
\end{eqnarray}
We mainly focus on the third term of the above identity since the rest can be dealt just as the corresponding ones in $II_{1}$ and $II_{2}$. Notice that
\begin{eqnarray}\label{eq-486}
&&\mathcal {V}^{1/2}_{2}(t)\chi_{3}\int^{t}_{t-A}\mathscr{U}_{0}(t,s)\mathcal {V}_{2}(s)Z(s)ds
=\mathcal {V}^{1/2}_{2}(t)\chi_{3}\int^{t}_{t-A}\mathscr{U}_{0}(t,s)\mathcal {V}_{2}(s)P_{b}(s)Z(s)ds\nonumber\\
&&\ \ \ \ \ \ \ +\ \mathcal {V}^{1/2}_{2}(t)\chi_{3}\int^{t}_{t-A}\mathscr{U}_{0}(t,s)\mathcal {V}_{2}(s)P_{c}(s)\mathscr{U}_{2}(s)Z_{0}ds\nonumber\\
&&\ \ \ \ \ \ \ \ \ \ \ -\ i\mathcal {V}^{1/2}_{2}(t)\chi_{3}\int^{t}_{t-A}\mathscr{U}_{0}(t,s)\mathcal {V}_{2}(s)\int^{s-B}_{0}\mathscr{U}_{2}(s,\tau)P_{c}(\tau)\mathcal {V}_{1\epsilon}Z(\tau)d\tau ds\nonumber\\
&&\ \ \ \ \ \ \ \ \ \ \ \ \ \ -\ i\mathcal {V}^{1/2}_{2}(t)\chi_{3}\int^{t}_{t-A}\mathscr{U}_{0}(t,s)\mathcal {V}_{2}(s)\int^{s}_{s-B}\mathscr{U}_{2}(s,\tau)P_{c}(\tau)\mathcal {V}_{1\epsilon}Z(\tau)d\tau ds\nonumber\\
&&\ \ \ \ \ \ \ \ \ \ \ \ \ \ \ \ \ +\ i\mathcal {V}^{1/2}_{2}(t)\chi_{3}\int^{t}_{t-A}\mathscr{U}_{0}(t,s)\mathcal {V}_{2}(s)\int^{s}_{0}\mathscr{U}_{2}(s,\tau)P_{c}(\tau)E(\tau)Z(\tau)d\tau ds\nonumber\\
&&\ \ \ \ \ \ \ \ \ \ \ \ \ \ \ \ \ \ \ \ +i\mathcal {V}^{1/2}_{2}(t)\chi_{3}\int^{t}_{t-A}\mathscr{U}_{0}(t,s)\mathcal {V}_{2}(s)\int^{s}_{0}\mathscr{U}_{2}(s,\tau)P_{c}(\tau)F(\tau)d\tau ds
\end{eqnarray}
and
\begin{eqnarray}\label{eq-487}
&&\mathcal {V}^{1/2}_{2}(t)\chi_{3}\int^{t}_{t-A}\mathscr{U}_{0}(t,s)\mathcal {V}_{1\epsilon}Z(s)ds=\mathcal {V}^{1/2}_{2}(t)\chi_{3}\int^{t}_{t-A}\mathscr{U}_{0}(t,s)P_{c}(s)\mathcal {V}_{1\epsilon}\mathscr{U}_{1}(s)Z_{0}ds\nonumber\\
&&\ \ \ \ \ \ \ \ \ \ \ \ \ +\ \mathcal {V}^{1/2}_{2}(t)\chi_{3}\int^{t}_{t-A}\mathscr{U}_{0}(t,s)\mathcal {V}_{1\epsilon}\int^{s-B}_{0}\mathscr{U}_{1}(s,\tau)\mathcal{V}_{2}(\tau)Z(\tau)d\tau ds\nonumber\\
&&\ \ \ \ \ \ \ \ \ \ \ \ \ \ \ \ \ +\ \mathcal {V}^{1/2}_{2}(t)\chi_{3}\int^{t}_{t-A}\mathscr{U}_{0}(t,s)\mathcal {V}_{1\epsilon}\int^{s}_{s-B}\mathscr{U}_{1}(s,\tau)\mathcal{V}_{2}(\tau)Z(\tau)d\tau ds\nonumber\\
&&\ \ \ \ \ \ \ \ \ \ \ \ \ \ \ \ \ \ \ \ \ +\ \mathcal {V}^{1/2}_{2}(t)\chi_{3}\int^{t}_{t-A}\mathscr{U}_{0}(t,s)\mathcal {V}_{1\epsilon}\int^{s}_{0}\mathscr{U}_{1}(s,\tau)F(\tau)d\tau ds.
\end{eqnarray}
for some large $B\ll T$. As before, for (\ref{eq-486}) we only need to consider
\begin{eqnarray}\label{eq-488}
&&\mathcal {V}^{1/2}_{2}(t)\chi_{3}\int^{t}_{t-A}\mathscr{U}_{0}(t,s)\mathcal {V}_{2}(s)\int^{s}_{s-B}\mathscr{U}_{2}(s,\tau)P_{c}(\tau)\mathcal {V}_{1\epsilon}Z(\tau)d\tau ds\nonumber\\
&=&\mathcal {V}^{1/2}_{2}(t)\chi_{3}\int^{t}_{t-A}\mathscr{U}_{0}(t,s)K_{\leq M}\mathcal {V}_{2}(s)\int^{s}_{s-B}\mathscr{U}_{2}(s,\tau)P_{c}(\tau)\mathcal {V}_{1\epsilon}Z(\tau)d\tau ds\nonumber\\
&&\ \ +\ \mathcal {V}^{1/2}_{2}(t)\chi_{3}\int^{t}_{t-A}\mathscr{U}_{0}(t,s)K_{\geq M}\mathcal {V}_{2}(s)\int^{s}_{s-B}\mathscr{U}_{2}(s,\tau)P_{c}(\tau)\mathcal {V}_{1\epsilon}Z(\tau)d\tau ds\nonumber\\
&:=&J_{L}+J_{H}.
\end{eqnarray}
The local decay estimate for $J_{H}$ can be conclude by using (\ref{eq-479}) and the same argument as the one for $J_{2}^{H}$ in $II_{1}$ (or $J^{3,H}_{c}$ in $II_{2}$). Moreover, it follows from bootstrap assumption (\ref{eq-419}) and H\"{o}lder's inequality that
\begin{eqnarray}\label{eq-489}
\|J_{L}\|_{L^{2}_{t}L^{2}_{x}}&\leq& \Big\|\sup_{|t-s|\leq A}\epsilon A^{1/2}B^{1/2}\big\|\mathcal {V}^{1/2}_{2}(t)\chi_{3}\mathscr{U}_{0}(t,s)K_{\leq M}\mathcal {V}_{2}^{1/2}(s)\big\|_{L^{2}_{x}\rightarrow L^{2}_{x}}\Big\|_{L^{2}_{t}}\big\|\mathcal{V}_{2}^{1/2}(\tau)Z(\tau)\big\|_{L^{2}_{\tau}L^{2}_{x}}\nonumber\\
&\lesssim&\big\|A^{1/2}B^{1/2}\langle t\rangle^{-1}\big\|_{L^{2}_{t}}\big\|\mathcal{V}_{2}^{1/2}(\tau)Z(\tau)\big\|_{L^{2}_{\tau}L^{2}_{x}}\nonumber\\
&\leq& \frac{1}{2}C(T)\big(\|Z_{0}\|_{L^{2}_{x}}+\big\|F\big\|_{L^{\tilde{p}'}_{t}L^{\tilde{q}'}_{x}}\big),
\end{eqnarray}
where we use the estimate
\begin{eqnarray}
\sup_{|t-s|\leq A}\epsilon\big\|\mathcal {V}^{1/2}_{2}(t)\chi_{3}\mathscr{U}_{0}(t,s)K_{\leq M}\mathcal {V}_{2}^{1/2}(s)\big\|_{L^{2}_{x}\rightarrow L^{2}_{x}}\lesssim \langle t\rangle^{-1}\nonumber
\end{eqnarray}
and the argument mentioned in (\ref{eq-441}) that one only need to prove (\ref{eq-419}) for $\int^{T}_{\tilde{M}}$ for some large positive constant $\tilde{M}\ll T$, therefore we can choose $t$ large enough such that $A^{1/2}B^{1/2}t^{-\varepsilon}$ are small for any $\varepsilon>0$, as well as $t^{-(1-\varepsilon)}\in L^{2}_{t}$. The corresponding term
$$\mathcal {V}^{1/2}_{2}(t)\chi_{3}\int^{t}_{t-A}\mathscr{U}_{0}(t,s)\mathcal {V}_{1\epsilon}\int^{s}_{s-B}\mathscr{U}_{1}(s,\tau)\mathcal{V}_{2}(\tau)Z(\tau)d\tau ds$$
in (\ref{eq-487}) can be dealt similarly.
Hence we finish the proof.

\end{proof}

{\bf Acknowledgements:} The first author is supported by NSFC (No. 11661061 and No. 11671163). The second author is partially supported by a grant from the Simons
Foundation (395767 to Avraham Soffer). A. Soffer is partially supported by NSF grant
DMS01600749 and NSF DMS-1201394. The third author is supported by NSFC (No. 11371158) and the program for Changjiang Scholars and Innovative Research Team in University (No. IRT13066). Part of this work was done while the second author was Visiting Professor at Central China Normal Univ. (CCNU), China.  Finally, we would like to thank Professor G.S. Perelman for her interests and helpful discussions in this work.

\end{document}